\newcommand{\abs}[1]{\lvert #1 \rvert}
\newcommand{\Abs}[1]{\left\lvert #1 \right\rvert}
\newcommand{\norm}[1]{\lVert #1 \rVert}
\newcommand{\Norm}[1]{\left\lVert #1 \right\rVert}
\newcommand{\brac}[1]{\langle #1 \rangle}
\newcommand{\Brac}[1]{\Big\langle #1 \Big\rangle}
\DeclareMathOperator{\softmax}{softmax}
\DeclareMathOperator{\Disc}{Disc}
\DeclareMathOperator{\id}{id}
\newcommand{\ie}{{\em i.e.,~}}
\newcommand{\R}{\mathbb{R}}
\newcommand{\Obj}{\mathcal{O}}
\newcommand{\J}{\Bar{J}}
\newcommand{\Q}{\Tilde{\mathcal{Q}}}
\newtheorem{assumption}[theorem]{Assumption}
\newtheorem{defi}{{\bf Intuition \& Sketch}}
\newcommand{\new}[1]{\textcolor{blue}{#1}}
\begin{document}

\title{CHANI: Correlation-based Hawkes Aggregation of Neurons with bio-Inspiration}

\author{\name Sophie Jaffard \email jaffard@mpi-cbg.de \\
       \addr Center for Systems Biology Dresden \\
       Max Planck Institute of Molecular Cell Biology and Genetics (MPI-CBG)\\
       Dresden, Germany
       \AND
       \name Samuel Vaiter \email samuel.vaiter@univ-cotedazur.fr \\
       \addr Laboratoire J. A. Dieudonné (LJAD)\\
       CNRS, Université Côte d'Azur\\
       Nice, France
        \AND
       \name Patricia Reynaud-Bouret \email patricia.reynaud-bouret@univ-cotedazur.fr \\
       \addr Laboratoire J. A. Dieudonné (LJAD)\\
       CNRS, Université Côte d'Azur\\
       Nice, France}

\editor{}

\maketitle

\begin{abstract}

The present work aims at proving mathematically that a neural network inspired by biology can learn a classification task thanks to local transformations only. In this purpose, we propose a spiking neural network named CHANI (Correlation-based Hawkes Aggregation of Neurons with bio-Inspiration), whose neurons activity is modeled by Hawkes processes. Synaptic weights are updated thanks to an expert aggregation algorithm, providing a local and simple learning rule. We were able to prove that our network can learn on average and asymptotically. Moreover, we demonstrated that it automatically produces neuronal assemblies in the sense that the network can encode several classes and that a same neuron in the intermediate layers might be activated by more than one class, and we provided numerical simulations on synthetic datasets. This theoretical approach contrasts with the traditional empirical validation of biologically inspired networks and paves the way for understanding how local learning rules enable neurons to form assemblies able to represent complex concepts.

\end{abstract}

\begin{keywords}
  Hawkes process, Local learning rule, Expert aggregation, Spiking neural network, Neuronal synchronization
\end{keywords}

\section{Introduction}

Biological neurons and their organization into networks served as the inspiration for the perceptron \citep{rosenblatt1957perceptron}, which pioneered artificial neural networks (ANNs). This initial brain inspiration continued to drive the development of more sophisticated models such as the neocognitron \citep{fukushima1982neocognitron} and later convolutional neural networks \citep{lecun1989handwritten}, all drawing from the biological processes observed in the visual cortex by \cite{hubel1962receptive}. Conversely, machine learning techniques can shed light on brain learning processes. Spiking neural networks (SNNs) \citep{Tavanaei_2019} illustrate well these interactions between the two domains: these networks model neurons activity by sequences of spikes representing the electrical pulses of biological neurons. More relevant than ANNs if one wants to study the brain neural code, they also provide insights into the development of energy-efficient algorithms \citep{stone2018principles}. To be realistic, SNNs are trained thanks to local learning rules. However, there is a gap between the empirical results provided by these rules and the underlying theory.

In the present study, as an initial stride towards demonstrating mathematically that local learning rules enable neurons to form assemblies and induce global learning, we propose a network of spiking neurons called CHANI for Correlation-based Hawkes Aggregation of Neurons with bio-Inspiration. The task is as follows: the network should learn to classify objects into one of several classes by identifying relevant feature correlations, and its learning process should involve local transformations only. The model involves
hidden and output nodes as postsynaptic neurons that produce spikes as a multivariate discrete-time Hawkes process \citep{ost2020sparse}, whose spiking probability is a function of the weighted sum of the activity of presynaptic neurons at the previous time step. The learning algorithm used to update synaptic weights comes from the expert aggregation field \citep{cesa2006prediction} thanks to this local paradigm: presynaptic neurons can be seen as experts and the strength of the connections between them and the postsynaptic neuron varies based on gains derived from these connections. Hidden neurons are trained to identify neuronal synchronization among presynaptic neurons, and a pruning process is employed to retain only those neurons that encode meaningful correlations, whereas output neurons are trained to respond to the classes in which the objects are classified.
\newline

\noindent \textbf{Contributions.} We present the first biologically inspired network which provably learns a classification task thanks to a local learning rule. Furthermore, our algorithm inherently generates neuronal assemblies: the network can encode multiple classes, and a single neuron in the intermediate layers may be activated by several classes.\\
$\bullet$ The main contributions of this article are summarized in Theorem~\ref{theo principal}. In a specific regime, referred to as \textsc{CHANI EWA}, we derive the explicit limit of the synaptic weights and show that \textsc{CHANI} converges to an idealized setting in which the hidden layers encode feature correlations. Moreover, after neuron selection, only the relevant feature correlations remain. When, in addition, classes are defined by feature correlations, we prove that CHANI succeeds the learning task asymptotically. \\
$\bullet$ We establish several intermediate results leading to Theorem~\ref{theo principal}, as well as complementary results refining its conclusions. In particular, Theorems~\ref{theorem lim hid} and~\ref{theorem lim out} provide rates of convergence for the synaptic weights. We also show that the network exhibits learning capabilities on average (Corollary~\ref{coro average}), and we compute the VC-dimension of \textsc{CHANI} (Proposition~\ref{prop vcdim}). Finally, under more general assumptions, we derive regret bounds on the learning capabilities of hidden and output neurons (Propositions \ref{prop reg hid} and \ref{prop reg class disc}). \\
$\bullet$ We illustrate our theoretical analysis with numerical results on the classification of simulated data and handwritten digits (section \ref{sec num res}).
\newline

\noindent\textbf{Related work.} In our prior work \citep{jaffard:hal-04065229}, we introduced a network named HAN (Hawkes Aggregation of Neurons) and established theoretical foundations for its learning capabilities. However, within the framework outlined in \cite{jaffard:hal-04065229}, HAN had a very simple structure, lacking hidden layers, and could only achieve very simple tasks. In the present work, we extend these results to CHANI, which can support an arbitrary number of hidden layers designed to detect neuronal synchronization, and provide novel findings described above.

Our network is inspired by the cognitive model Component-Cue \citep{gluck1988conditioning}, which states that an individual learns to classify objects by finding combinations of features which describe well the several object categories. The article \cite{MEZZADRI2022102691} compared this model to another one called ALCOVE \citep{kruschke2020alcove}, which 
stipulates that people classify new objects by comparing them to previously learned ones, in order to see which one is closer to human behavior. They showed that often Component-Cue is a best fit to human learning. Note that Component-Cue is a classic ANN in the sense that is does not involve spiking neurons nor local learning rule, and it does not comprise hidden layers. 

In the brain, conceptual objects are represented by analyzing and representing relationships among incoming signals. While elementary concepts can be depicted by the responses of individual neurons, more intricate ones are represented by groups of interconnected neurons that work together: we talk about neuronal assemblies \citep{singer1997neuronal}. A single neuron can contribute to multiple assemblies, indicating that it may participate in representing different concepts. A current measure of assembly organization is based on correlations of firing among neurons. It has been shown on recorded and simulated data that this synchronicity can be caused by dynamic changes of synaptic connection strength \citep{gerstein1989}, and neuroscientists have identified several local learning rules explaining these changes. As well-known example, Hebbian learning \citep{hebb2005organization} says that neurons that repeatedly fire together tend to become associated.

Spike-timing-dependent plasticity (STDP) \citep{caporale2008spike}, a refined form of Hebbian learning, states that a connection is strengthened if the presynaptic neuron spiked just before the postsynaptic neuron, and weakened otherwise; this rule has been used in order to simulate neuronal assemblies \citep{litwin2014formation}. However, there is no mathematical proof that these local rules enable to learn, nor that they produce neuronal assemblies. Establishing such theoretical guarantees would aid in comprehending how local mechanisms contribute to the formation of neuronal assemblies and how they function.

Enunciated in \cite{legenstein2005can}, the Spiking Neuron Convergence Conjecture (SNCC) says that SNNs can learn to implement any achievable transformation. This conjecture is inspired by the perceptron convergence theorem \citep{rosenblatt1962principles}, which asserts that as soon as the data to classify is linearly separable (\ie as soon as there exist weights with whom the data can be classified), then the weights given by the perceptron learning algorithm enable to correctly classify the data.
This conjecture has been explored for networks using STDP as learning rule \citep{legenstein2005can, legenstein2008learning}, with no conclusive theoretical result. In section \ref{sec classes feat corr}, we prove a version of this conjecture in a very specific case for CHANI.

The multivariate Hawkes process \citep{hawkes1971spectra} is a self-exciting and mutually exciting point process. Originally applied to the modelling of earthquake data \citep{turkyilmaz2013comparing}, its field of application is very broad: it is well-adapted to model networks of firing neurons \citep{hodara2017hawkes}, financial transactions \citep{hawkes2018hawkes, bacry2015hawkes}, health data \citep{pmlr-v68-bao17a}, social networks \citep{zhou2013learning}, or more generally, any sequences of events such that the occurrence of an event influences the probability of further events to occur. Also known as generalized linear models (GLMs) \citep{gerhard2017stability} in the literature, a lot of studies are about the estimation of its parameters \citep{reynaud2010adaptive, kirchner2017estimation}, its mathematical properties \citep{bremaud1996stability} and its simulation in large networks \citep{bacry2017tick, phi2020event, mascartetal2022, mascart2023efficient}. In neuroscience, it is used for instance to reconstruct functional connectivity \citep{reynaud2013inference, lambert2018reconstructing}, or to model networks of spiking neurons in order to study their mathematical properties \citep{galves2016modeling}. However, a common assumption made when mathematically analyzing neural networks modeled by Hawkes processes is that their synaptic weights remain constant. This enables to achieve a stationary state, but prevents the modeling of learning behavior. 

Hawkes processes have already been used to model events in a context of online learning: \cite{chiang2020hawkes} proposes to solve spatio-temporal event forecasting and detection problems thanks to a multi-armed bandit algorithm, and \cite{hall2016tracking} perform dynamic mirror descent to track how occurred events influence future events. However, none of these works use online learning algorithms to update the parameters of the Hawkes processes as we propose.

In the recent years, the emergence of transformers \citep{vaswani2017attention, NEURIPS2023_0561738a} have 
revolutionized the field of deep learning, especially in natural 
language processing, but not only: for instance, the Transformer Hawkes Process \citep{pmlr-v119-zuo20a} incorporates attention modules into the formula of the
 conditional intensity of the Hawkes process in order to learn event sequence data. 
 Many works have been done to understand them better: for instance, \cite{NEURIPS2023_rohekar} provide 
 a causal interpretation of self-attention, and \cite{Cordonnier2020On} investigate the relationship between 
 self-attention and convolutional layers. In section \ref{sec transformers}, we draw an analogy between 
 transformers and our model to contribute in understanding the remarkable effectiveness of transformers.

\section{Presentation of the network learning algorithm}

The structure and learning process of CHANI involve heavy notation that cannot be simplified without affecting the proofs. To improve readability, we alternate formal sections with ``Intuition and Sketch'' boxes, so that readers can focus on the main ideas. Additionally, Figure \ref{fig:Sketch} gives the main steps of the algorithm.

\begin{figure}
    \centering
~\hspace{-1cm}\begin{tabular}{cc}
{\bf A.}\includegraphics[width=0.5\linewidth]{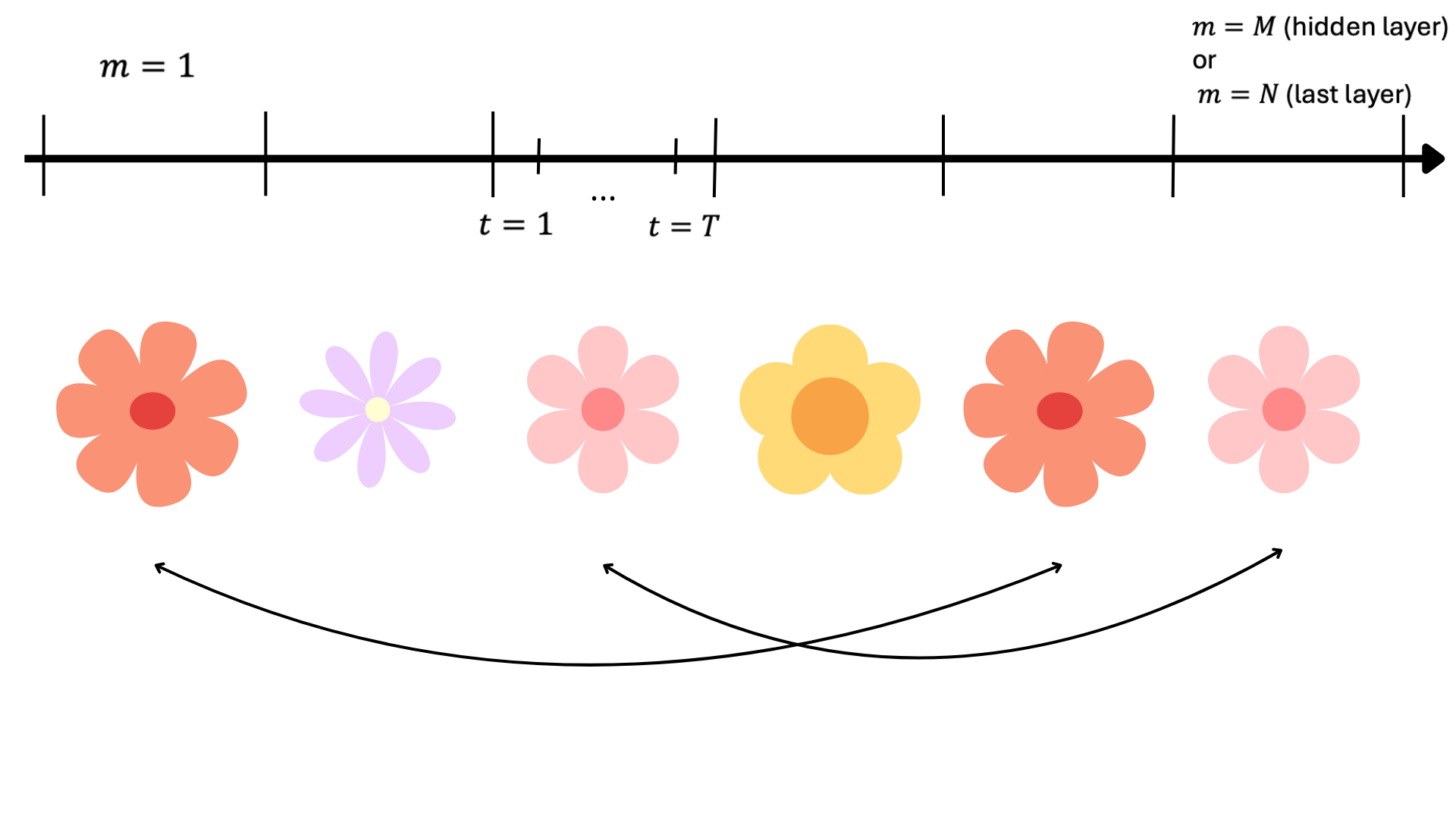}& {\bf B.}\includegraphics[width=0.5\linewidth]{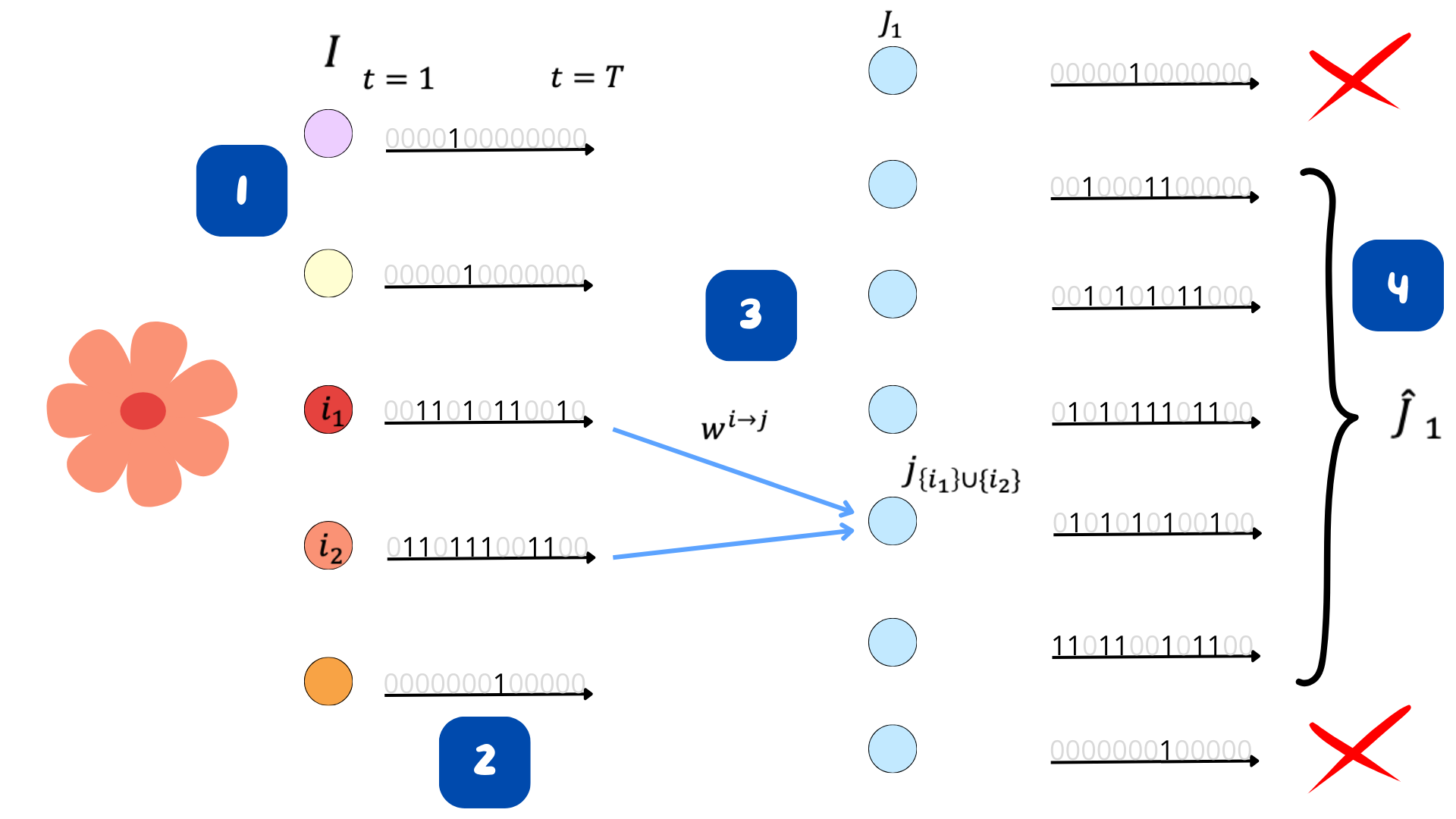}\\
{\bf C.}\includegraphics[width=0.5\linewidth]{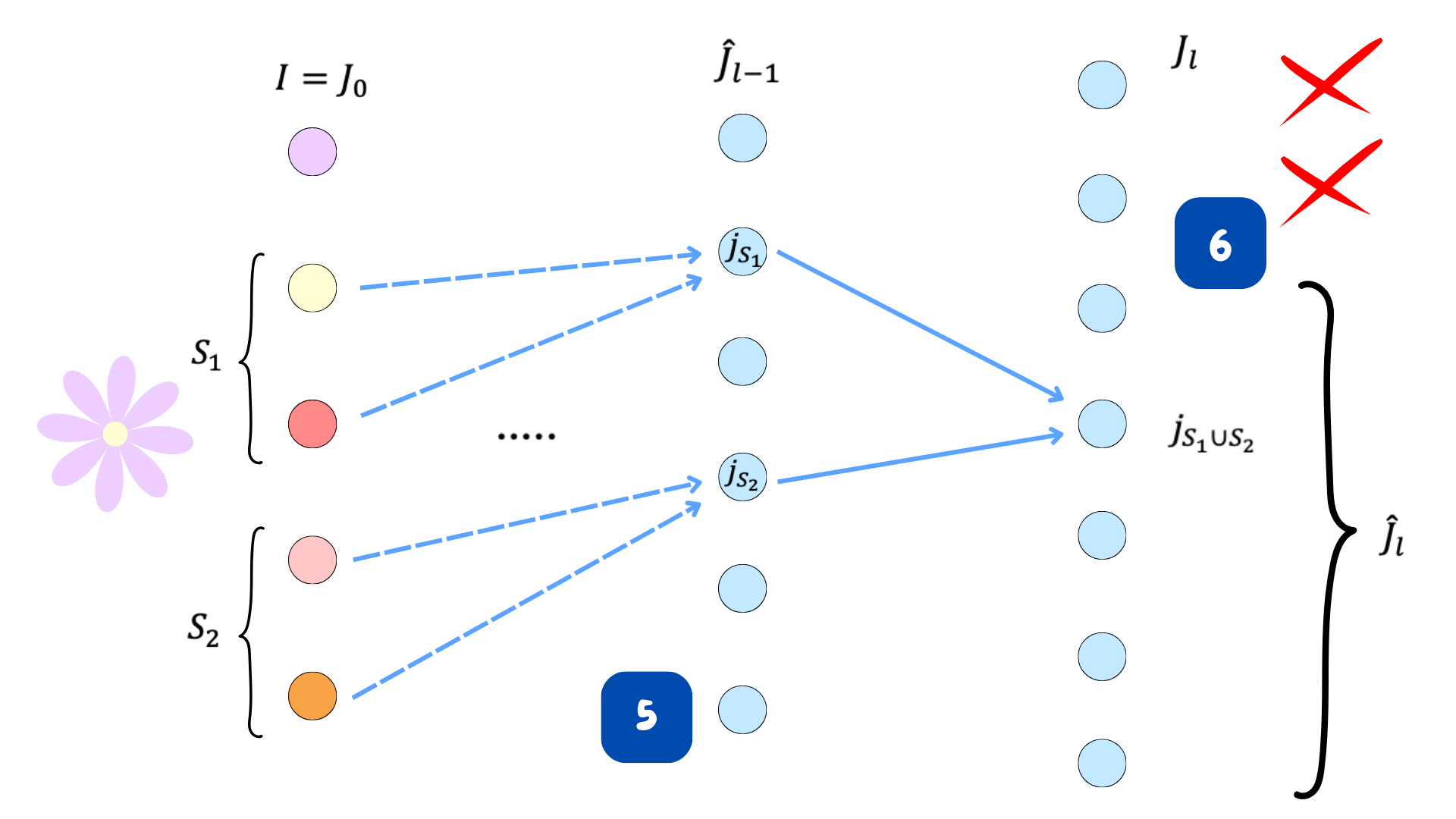} & {\bf D.}\includegraphics[width=0.5\linewidth]{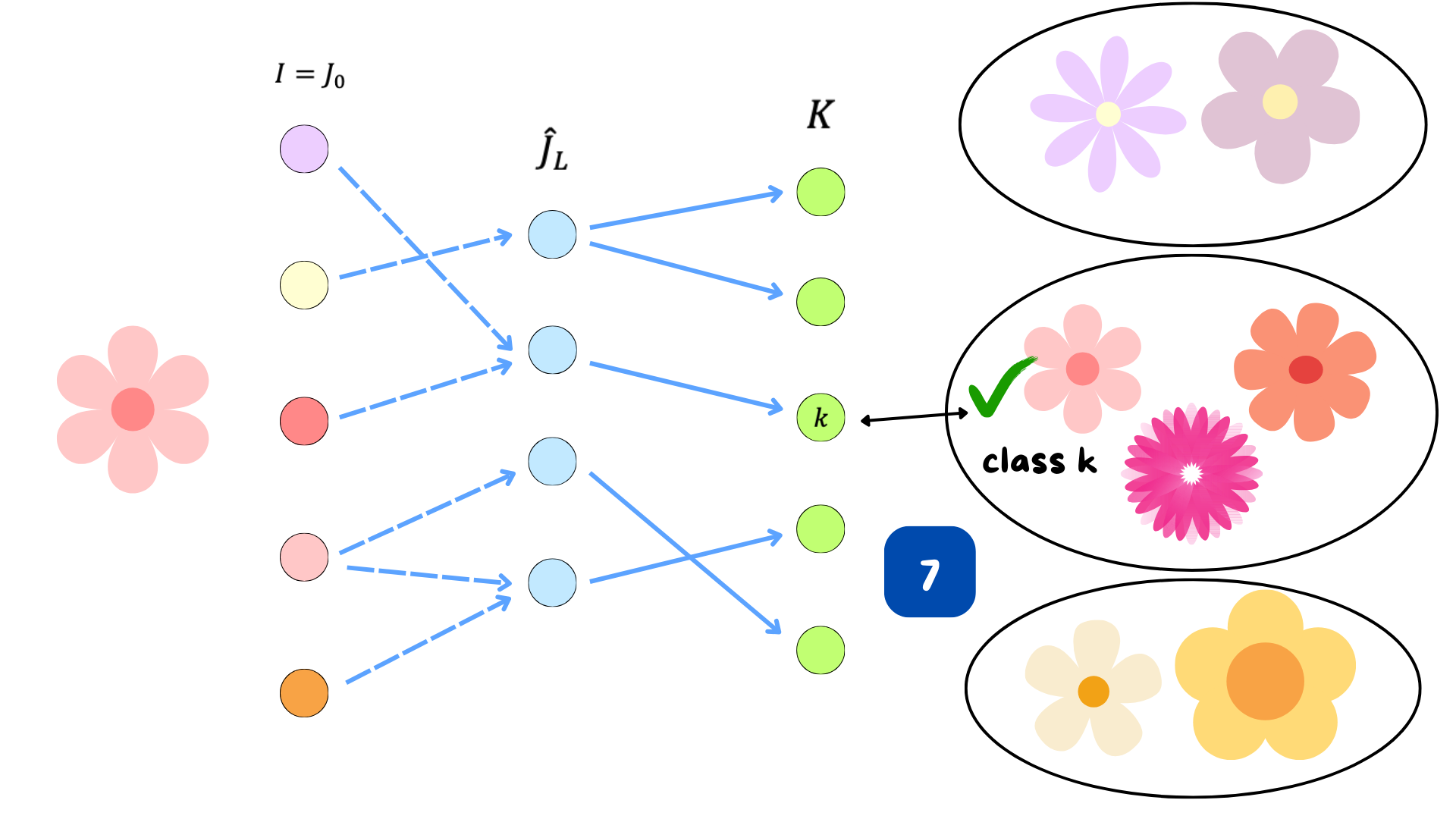}
\end{tabular}
\caption{Sketch of CHANI algorithm. {\bf A}. CHANI’s training consists of the sequential presentation of objects, with possible redundancy: here, objects 1 and 5 (respectively 3 and 6) share the same nature. Each object $m = 1, \dots, M$ or $N$ (depending on whether hidden layers are being trained or not) is presented for a duration $T$. {\bf B}. (Step 1) The network senses the presented object.
(Step 2) Depending on the presence of a given feature $i$ (e.g., color) in the object, neuron $i \in I$ emits more spikes during the presentation time $T$ (i.e., more “1”s in the sequence of length $T$ that it produces).
(Step 3) In the first hidden layer, each neuron $j_{\{i_1\} \cup \{i_2\}} \in J_1$ searches for correlations between features $i_1$ and $i_2$ in the presented objects and updates its synaptic weights $w_m^{i \to j}$ after processing the $m^{th}$ object. Ideally, at the end of training, it spikes only when the object contains both features $i_1$ and $i_2$.
(Step 4) Neurons that do not detect sufficient correlation across all presented objects during the first training phase (presentation of $M$ objects) are pruned. The resulting layer is denoted $\hat{J}_1$.
{\bf C}. (Step 5) As the layers are sequentially trained, a neuron $j_S$ in layer $J_{l-1}$ learns to detect the presence of all features $i \in S \subset I$ in the presented object. Ideally, at the end of training, this neuron spikes only when the object exhibits all features in $S$. The dashed arrows represent composite connections across several layers.
(Step 6) In layer $J_l$, a neuron $j_{S_1 \cup S_2}$ searches for correlations between neurons $j_{S_1}$ and $j_{S_2} \in \hat{J}_{l-1}$. Ideally, after learning, it spikes only when the presented object contains all features in $S_1 \cup S_2$. All layers are successively pruned to retain only the relevant correlations in the presented objects. {\bf D}. (Step 7) Once all hidden layers have been trained, the final layer $K$ is added. Each neuron $k \in K$ identifies the neurons $j_S \in \hat{J}_L$ that are the most sensible to objects of class $k$. At the end of training, ideally, neuron $k$ is connected to all $j_S \in \hat{J}_L $ such that the feature set $S$ is relevant for defining class $k$.
    }
\label{fig:Sketch}
\end{figure}

For a quantity $x_e$ indexed by $e\in E$, we use the following notations: $x_E = (x_e)_{e\in E}$ and $\brac{x_e}_{e\in E} =\frac{1}{\abs{E}}\sum_{e\in E} x_e$.
These notations are also used when the index appears as a superscript. The set of probability distributions over a set $E$ is denoted $\mathcal{P}_E$.
For all $n \in \mathbb{N}^*$, we denote $[n] = \{ 1, \dots, n \}$.
All the other notations are listed in Table \ref{tab_notations} in Appendix \ref{sec notations}.

\subsection{Notation relative to the objects presentation during learning }

We consider a supervised classification setting where the goal is to assign objects to classes $k \in K$ using a Spiking Neural Network (SNN).

More precisely, the SNN learns through sequential exposure to objects and is trained layer by layer. The $l^{\text{th}}$ hidden layer, for $l = 1, \dots, L$, is trained during round $l$, which consists of the sequential presentation of $M$ objects. Once all hidden layers have been trained, the output layer is trained during round $L + 1$ through the presentation of $N$ objects.

Throughout the paper, and for any round —including the one involving $N$ objects— each object is associated with an index $m$, which ranges from $1$ to $M$ for the rounds corresponding to the hidden layers ($l = 1, \dots, L$) and from $1$ to $N$ for the training of the output layer (round $L + 1$).

Each object $m$ in every round is presented for a duration $T \in \mathbb{N}^*$.


\medskip
\begin{tcolorbox}[breakable, width=\textwidth]
\begin{defi}[Time scales]
Figure \ref{fig:Sketch}.{\bf A} provides a schematic illustration of the successive presentation rounds.
There are two time scales: $M$ or $N$, representing the learning time and indexed by the presented object $m$; 
and a more microscopic scale, $T$, which corresponds to the time allowed for the network to spike and is 
indexed by time $t = 1, \ldots, T$.
\end{defi}
\end{tcolorbox}

\subsection{The input layer} \label{sec input}

\begin{tcolorbox}[breakable, width=\textwidth]
\begin{defi}[Sensing of the object]
When an object is presented to the network for a duration $T$, the input neurons sense the object and start to spike (see Figure \ref{fig:Sketch}.{\bf B}, Step 1).
These input neurons are tuned to detect features (see Figure \ref{fig:Sketch}.{\bf B}, Step 2). Depending on the presence or absence of features —or possibly on the degree to which a feature is present— the firing rate of these input neurons changes accordingly.
As a first approximation, one can assume that neuron $i$ remains silent if feature $i$ is absent, and that it spikes at a fixed rate if the feature is present. However, our network can also handle residual firing rates when a feature is absent, or firing rates that vary with the degree of presence of a given feature.
For instance, in the numerical results on classification of handwritten digits (Section \ref{sec:digit}), a feature corresponds to a pixel, and the firing rate is proportional to its darkness.

\end{defi}
\end{tcolorbox}


Neurons of the input layer are always indexed by a $i$.
A neuron of the input layer $i\in I$ produces a sequence of i.i.d. Bernoulli variables $X^{i,l}_{m,t}$, for $t=1,...,T$ during the presentation of object $m=1,...,M$ or $N$ in round $l=1,...,L+1$. This activity is independent of the previous activities of all the neurons of the input layer of the same learning session, \ie $X^{i,l}_{m,t}$ is independent from $X^{i',l}_{m,s}$ for $i' \in I$ and $s<t$. However we do not assume that $X^{i',l}_{m,t}$ is independent from $X^{i,l}_{m,t}$ for $i\not = i'$. 


\begin{tcolorbox}[breakable, width=\textwidth]
\begin{defi}[Nature versus object]
To ensure consistency and enable theoretical results, we need to establish a link between the firing distribution of the input neurons and the presented object. For this reason, we introduce a second concept: the nature of an object.
The nature of an object refers to its intrinsic  characteristics that are impacting the input layer. In particular, the input neurons respond in the same way (in a distributional sense) for all objects sharing that nature. The simplest way to think about this is to imagine that some objects represent exactly the same entity. For instance, in Figure \ref{fig:Sketch}.{\bf A}, the same flower is presented multiple times. If these different occurrences are considered as distinct objects (with different indices $m$), they share the same nature, and the firing rates of the input neurons should therefore be identical.
The nature of an object is indexed by $o \in \mathcal{O}$. The only thing the network can learn is a mapping from the set of natures $\mathcal{O}$ to the set of classes $K$, since only the nature of the object influences the activity of the input neurons.

\end{defi}
\end{tcolorbox}

Let $o_m^l$ and $o_{m'}^{l'} \in \Obj$ be the natures of two objects $m$ and $m'$ presented respectively during rounds $l$ 
and $l'$. If $o_m^l=o_{m'}^{l'}=o$ 
then since the distribution of the input neurons is {\bf nature-only} dependent, we have that
$$\forall t,s=1,...,T, \forall i\in I,  \mathbb{P}(X^{i,l}_{m,t}=1)=\mathbb{P}(X^{i,l'}_{m',s}=1).$$
We call this probability $p^i_o$, this is the firing rate of neuron $i$ when exposed to an object of nature $o$.

Note that the objects $o^l_m$ may be interpreted as training samples; however, we retain the term object to emphasize that, in our setting, the training samples have a particular structure. A training object $o^l_m$ will influence the input neurons in the same manner as a test object of the same nature once learning has taken place.

\begin{tcolorbox}[breakable, width=\textwidth]
\begin{defi}[About the features]
Remark that we use the same notation for the index $i\in I$ of a neuron in the input layer, and the feature $i\in I$ that this neuron senses. If we follow this logic, a feature $i\in I$ is the characteristic of the object that is modifying the firing rate of neuron $i$. For instance, it can be the presence/absence of a certain color, and here the features $i\in I $ are colors, as well as the neuron $i\in I$ that detects the presence/absence of this color (see Figure \ref{fig:Sketch}.{\bf B} Step 1 and 2). It can also be the level of gray of pixels and here the feature $i\in I$ is a  pixel as well as the neuron $i\in I$ that is sensible to the pixel intensity (see Section \ref{sec:digit}). 
\end{defi}
\end{tcolorbox}

\subsection{The hidden layers}

A neuron of a hidden layer is always indexed by a $j$. 

 The hidden layers are constructed recursively, starting from the input layer, which is assimilated to the  hidden layer $J_0$, with the correspondance that a neuron $i\in I$ can also be seen as a $j_{\{i\}} \in J_0$. In layer $J_0$, only singletons are allowed as subscripts.
 Then by recursion, for $l\geq 1$, the $l^{th}$ hidden layer consists in neurons $j_S$, where $S\subset I$ and there exists $S_1\neq S_2\subset I$ such that $j_{S_1}$ and $j_{S_2}$ are in layer $l-1$. By recursion, it follows that a $j_S\in J_l$ corresponds to an $S$ with cardinal $|S|=2^l$.
 
Not all possible sets of cardinal $2^l$ are used. Indeed the layers are trained recursively and pruned. More precisely, to fix notation, let $I_l$ be the set of all subsets of $I$ with cardinal $2^l$. If at the first hidden layer, $J_1$ can be put in  one-to-one correspondence with $I_2$, only a subset $\hat{J}_1$ is kept at the end of the training phase, so that $J_2$ corresponds only to union of pairs of sets appearing in  $\hat{J}_1$. More generally, for all $l$,  we only have $\hat{J}_l\subset J_l$ which can be put in one-to-one correspondence with a subset of $I_l$.


\begin{tcolorbox}[breakable, width=\textwidth]
\begin{defi}[About the indexation by a subset $S$]
In short, a neuron $j_S$ in a hidden layer is associated with a subset $S \subset I$ and aims to be sensitive to all objects that “possess" all features $i\in S$. After training, ideally $j_S$ spikes if and only if all neurons $i \in S$ are sufficiently active (see  Figure \ref{fig:Sketch}.{\bf C} Step 5). If before the training of layer $J_l$ a certain number of possible sets $S$ are envisioned, the layer is pruned and becomes $\hat{J}_l$ to consider only the sets $S$ that are really present in the data. For instance, if there is no blue and pink flower in the data that are presented to the network (see  Figure \ref{fig:Sketch}.{\bf B} Step 4) and if the training went well,  the neuron $j\in J_1$ that should detect the co-occurrence of pink and blue would not produce many spikes and would be removed before going to the training of layer $2$. This pruning happens after each learning round $l$, which  trains layer $l$  (see  Figure \ref{fig:Sketch}.{\bf C} Step 6).
\end{defi}
\end{tcolorbox}


During learning round $l$, at the presentation of object $m$, the activity of a neuron $j$ of an hidden layer $l'\leq l$, at time $t=1,...,T$,  is given by  $X_{m,t}^{j,l} \in \{0,1\}$. It obeys a non-linear discrete Hawkes process, with random synaptic weights. More precisely, if we consider the filtration
 $$\mathcal{F}^l_{m,t} = \sigma\left( X_{m',s}^{j,l'},~ m'\leq m,~s\leq T \text{ or } s\leq t \text{ if } m' = m,~j \in \bigcup_{l"=0,..., l'-1} \hat{J}_{l"} \cup J_{l'},~l'\leq l\right),$$
which represents the $\sigma$-algebra generated by  all the activity of all neurons, in layer less than $l$,  during all the learning rounds $l'\leq l$ until the time $t$ of the presentation of object $m$ in round $l$,
we have that, for a neuron $j\in J_l$,
\begin{equation}\label{defpjm}
p^{j,l}_{m,t}(w^j_m):= \mathbb{P}(X_{m,t}^{j,l}=1| \mathcal{F}^l_{m,t-1})=(\psi^{j,l}_{m,t}(w^j_m))_+
\end{equation}
where
\[\psi^{j,l}_{m,t}(w^j_m):= -\nu + w^j_m \cdot X^{\hat{J}_{l-1},l}_{m,t-1}\]
where

\noindent $\bullet$ $w^j_m =(w^{j'\to j}_m)_{j'\in \hat{J}_{l-1}} \in \mathcal{P}_{\hat{J}_{l-1}}$, are the random presynaptic weights of neuron $j$, where $\mathcal{P}_{\hat{J}_{l-1}}$ is the set of probability vectors  over the set $\hat{J}_{l-1}$. They are changing at each object $m$ that is presented (see Section \ref{sec:weight}) and they are $\mathcal{F}^l_{m-1,T}$ measurable;

\noindent $\bullet$ $\cdot$ refers to the usual scalar product in $\R^{\abs{\hat{J}_{l-1}}}$;

\noindent $\bullet$  the quantity  $\nu \geq 0$ combined with $(\cdot)_+$ acts as a threshold, providing non-linearity to the network response (see Section \ref{sec discuss nu} for a discussion 
about its role);

\noindent $\bullet$  with the convention $X^{j',l}_{m,0}=0$. 

To simplify the exposition, inhibition was not incorporated into the model, although the framework could be extended to include it without difficulty. Self-interactions were also omitted in order to facilitate the theoretical analysis

Moreover at the end of the learning phase $l$,  the weights are frozen to their value $w^j_{M+1}$ for all further use, so that the activity of a neuron $j\in \hat{J}_{l'}$ with $0<l'<l$ during learning round $l$, is given by
$$p^{j,l}_{m,t}(w^j_{M+1}):= \mathbb{P}(X_{m,t}^{j,l}=1| \mathcal{F}^l_{m,t-1})=(\psi^{j,l}_{m,t}(w^j_{M+1}))_+,$$
where the $w^j_{M+1}$ have been computed during the learning round $l'<l$.

\begin{tcolorbox}[breakable, width=\textwidth]
\begin{defi}[About the presynaptic weights and $\nu$]
The presynaptic weigths $w^{j'\to j}_m$, with $j'\in \hat{J}_{l-1}$ and $j\in J_l$  (see Figure \ref{fig:Sketch}.{\bf B} Step 3) are there to link the activities of the layer $l-1$ to the one of layer $l$. They evolve at each new object thanks to a local learning rule described later to mimic biological mecanisms (see Section \ref{sec:weight}), so that in particular during the learning phase, the activity of neuron $j$ is not nature-only dependent.  The quantity $\psi^{j,l}_{m,t}(w^j_{m})$ can be seen as a very crude model of the membrane voltage of a neuron submitted to synaptic integration and in this sense, $\nu$ acts as  a resting potential at which the neuron cannot spike. It is only if excited enough by its presynaptic neurons, that neuron $j$ can spike.
\end{defi}
\end{tcolorbox}


At the end of learning round $l$, and before starting learning round $l+1$, one presents each object $o\in \mathcal{O}$ only once. This is the selection phase. Therefore the activity of all neurons $j$ in  layer  $l'\leq l$ is denoted $X^{j,l'}_{o,t}\in \{0,1\}$ and it obeys
\begin{equation}
    \label{poj}
\mathbb{P}(X_{o,t}^{j,l'}=1| \mathcal{F}^l_{M,T}\mbox{ and } X^{\hat{J}_{l'-1},l}_{o,1:t-1} )=(-\nu + w^j_{M+1} \cdot X^{\hat{J}_{l'-1},l}_{o,t-1})_+.
\end{equation}

For every $l\in [L]$,
a threshold $s_l \in (0,1]$ is fixed. At the end of the selection phase of layer $l$, the set of selected neurons is
\begin{equation}\label{thresh}
\hat{J}_l := \{j\in J_l \text{ s.t. } \exists o\in \mathcal{O}, \brac{X^{j,l}_{o,t}}_{t\in[T]} \geq s_l\}.
\end{equation}
In other words, the selected neurons $\hat{J}_l$ are the ones with an empirical spiking probability $\brac{X^{j,l}_{o,t}}_{t\in[T]}$ larger than the threshold $s_l$ for at least one object of the selection phase. This corresponds to step $11$ of Algorithm \ref{algo}. Then for $l<L$, the set of neurons used to train the next layer is $$J_{l+1}:= \{j_{S_1\cup S_2} \ : \ j_{S_1},j_{S_2}\in \hat{J}_l \ \text{such that} \ S_1\cap S_2 = \emptyset\},$$ which is coding for feature subsets belonging to $I_{l+1}$. This selection encourages sparsity in the network in terms of the number of neurons.

\begin{tcolorbox}[breakable, width=\textwidth]
\begin{defi}[About the biological interpretation] \label{intuit bio}
Our model is a caricature of some phenomenons in the brain. Indeed, if our learning rounds are very strict, it is true that during development, connections between different part of the brain are done in a sequential fashion \citep{hevner2000development}. It is also true that during development and in particular, in the visual system (retina/ cortex), that some dummy signals, called retinal waves,  are used to pretrained part of the system in particular in terms of correlations between presynaptic neurons and connections towards postsynaptic neurons \citep{feller2020retinal}. Finally it is also true that  a phenomenon called pruning happens at the synapse level so that the brain is progressively loosing some neuronal connections and that this is an inherent part of the learning process \citep{lewis2011pruning}.  
\end{defi}
\end{tcolorbox}

\subsection{The output layer} \label{sec output}

A neuron of a hidden layer is always indexed by a $k$ and there is a one-to-one correspondance between the output neurons and the classes in which the objects are classified, so that we denote both by the same letter $k\in K$.

Once all the $L$ hidden layers have been trained, we start the $L+1$ learning round, ie we present $N$ objects to the network to learn the final classification. 
During this round, at the presentation of object $m$, the activity of a neuron $k$  is given by $ X_{m,t}^{k,L+1} \in \{0,1\}$  and  the activity of a neuron $j$ of an hidden layer $l'\leq L$, at time $t=1,...,T$,  is given by  $X_{m,t}^{j,L+1} \in \{0,1\}$. The distribution of the hidden layers is as before, with weights fixed to $w_{M+1}^j$, whereas the distribution of $X_{m,t}^{k,L+1}$ is given by
\begin{equation}\label{formula proba output}
 p^k_{m,t}(w^k_m):= \mathbb{P}(X_{m,t}^{k,L+1}=1| \mathcal{F}^{L+1}_{m,t-1})  =  w^k_m \cdot X^{\hat{J}_{L},L+1}_{m,t-1}.
\end{equation}
Again, the synaptic weights $w^k_m$ form a probability distribution over the set of presynaptic neurons so $ p^k_{m,t}(w^k_m)\in [0,1]$ a.s. At the end of the presentation of each objects, the weights $w^k_m$ are updated and are $\mathcal{F}^{L+1}_{m-1,T}$ measurable (see Section \ref{sec:weight}).
It corresponds to the Solo case of \cite{jaffard:hal-04065229}.
At the end of learning, the weights are frozen and we go back to a nature-only dependent behavior of the output, so that at the presentation of object with nature $o$, we have
\begin{equation}
    \label{pok}
\mathbb{P}(X_{o,t}^{k,L+1}=1| \mathcal{F}^{L+1}_{M,T}\mbox{ and } X^{\hat{J}_{l'-1},L+1}_{o,1:t-1} )=(-\nu + w^j_{M+1} \cdot X^{\hat{J}_{l'-1},l}_{o,t-1})_+.
\end{equation}

The rule to classify objects is as follows:  the object $o_m$ is classified by the network in the class coded by the output neuron $k$ which spiked the most during the presentation of the object.  

\begin{tcolorbox}[breakable, width=\textwidth]
\begin{defi}[Classification rule and identity of neurons]
\label{idneur}
The output layer aims at describing the several classes in which the objects are classified.  Ideally, after its training session, a neuron $k$ would be active when an object belonging to class $k$ is presented to the network, and non-active otherwise. The eventual activity of neuron $k$ when presented with an object which does not belong to its class is considered as noise. Therefore, to correctly classify the object, the neuron coding for its class must overcome the noise coming from other neurons. However, since we are in a local learning rule set-up, neuron $k$ does not have access, when modifying its weights, to the output of the other neurons $k'\neq k$. It has only access to its presynaptic activity as well as feedback from the environment, which is if the presented object belongs or not to class $k$ (see Figure \ref{fig:Sketch}.{\bf D} (Step 7)). Biologically speaking, it is true that some neurons might have access, for instance by dopamine secretion,  to at least a notion of success / failure \citep{fremaux2016neuromodulated, schultz1998predictive}.

More generally, the fact that we associate a class $k$ and a neuron $k$ or a set $S\subset I$ and a hidden neuron $j_S$ is completely artificial. It is more likely that at random, many neurons get attributed  to sets $S$ or to class $k$ with possible redundancy, so that in the end, at least most of the current classes $k$ and set of features $S$ are represented at least once. To avoid additional steps, we made the identifications and decided to have one and only one represent of class $k$ and set $S$ in the family of neurons.

\end{defi}
\end{tcolorbox}

\subsection{Learning and local rules}

\subsubsection{Correlations}
The hidden layers learning is based on correlation. We will need several notions of correlations. Let $I'\subset I$ a subset of features, $o\in \mathcal{O}$. Then the \emph{correlation of $I'$ w.r.t. object} $o$ is
    \[
    \rho_o(I'):=\mathbb{P}\Big(\bigcap_{i\in I'} \{X^{i,1}_{o,T} = 1\}\Big),
    \]
    \ie the probability that the input neurons of the set $I'$ spike together, the \emph{average correlation} of $I'$ is
    $
    \rho(I') := \brac{\rho_o(I')}_{o\in \Obj} ,
    $
    \ie the average correlation of the input neurons of the set $I'$ on every object and the \emph{average correlation of $I'$ w.r.t class} $k\in K$ is
    $
    \rho^k(I') := \brac{\rho_o(I')}_{o\in k},
    $
     \ie the average correlation of the input neurons of the set $I'$ on objects of class $k$.
Let $l\in [L]$, $m\in [M]$, $J$ be a subset of neurons belonging to a layer with depth less than $l$. Then the \emph{empirical correlation} of $J$ when presented with object $o_m^l$ is 
    \[
    \hat{\rho}_m^l(J):=\Brac{\prod_{j\in J} X^{j,l}_{m,t}}_{t\in[T]},
    \]
which is an estimator of the probability that the neurons of the set $J$ spike together when presented with object $o_m^l$.

\subsubsection{Expert aggregation}
The learning rules that are used to update the weights are based on expert aggregation \citep{cesa2006prediction}. Let us recall the main features and the corresponding  notation.
Here is a description of the expert aggregation problem. During $M$ rounds, a forecaster can choose between several experts belonging to a set $E$. At each step $m$, each expert $e\in E$ has an unknown gain $g_m^e\in \mathbb{R}$. Thanks to the past knowledge, the forecaster chooses a probability distribution $p_m \in [0,1]^{\abs{E}}$ over the set of experts $E$, and then receives the aggregated gain $g_m := p_m \cdot g_m^E$
where $g_m^E:=(g_m^e)_{e\in E}$. This aggregated gain can be interpreted as the expectation of the gain that the forecaster would get by choosing one expert with probability $p_m$. Experts accumulate cumulated gains $G^E_m := (G^e_m)_{e\in E}$ were $G^e_m := \sum_{m'=1}^m g^e_{m'}$, as well as the forecaster which accumulates the cumulated gain $G_m := \sum_{m'=1}^m g_{m'}.$
The {\it regret} of the forecaster measures how good its strategy is. It is defined as
\[R_M := \max_{q\in \mathcal{P}_E} \sum_{m=1}^M q \cdot g^E_m - G_m. \]
It compares the best possible cumulated gain with a constant strategy to the actual cumulated gain of the forecaster. It quantifies how close the strategy of the forecaster is to the optimal one.

An expert aggregation algorithm is a function $f : \mathbb{R}\times \mathbb{R}^{\abs{E}} \mapsto [0,1]^{\abs{E}}$ that the forecaster uses to update its strategy: the probability distribution chosen by the forecaster for the next round is
\[p_{m+1} := f(G_m, G^E_m).\]
Expert aggregation algorithms are designed to achieve small regret bounds, typically of the order of $\sqrt{M}$. 
Let us cite two main algorithms:

\noindent $\bullet$ EWA (Exponentially Weighted Average) determines the probability of selecting expert $e$ at round $m$ as
\[p^e_{m+1} = \frac{\exp(\eta G^e_m)}{\sum_{e'\in E} \exp(\eta G^{e'}_m)}.\]
The parameter $\eta\geq 0$ is the learning rate. It quantifies how fast the forecaster learns.

\noindent $\bullet$  PWA (Polynomially Weighted Average) determines the probability of selecting expert $e$ at round $m$ as
    \[p^e_{m+1} = \frac{(G^e_m - G_m)_+^{b-1}}{\sum_{e'\in E} (G^{e'}_m - G_m)_+^{b-1}} , \]
where $b \geq 2$ is a parameter to choose.

These two expert aggregation algorithms employ different strategies: EWA ensures that each expert is assigned a strictly positive probability of selection, whereas PWA assigns a zero probability to experts whose cumulative gains are less than the forecaster's.

\subsubsection{Weights update \label{sec:weight}}
Each neuron $j$ of an hidden layer $l$ learns by updating its synaptic weights $w^j_m = (w^{j'\to j}_m)_{j'\in \hat{J}_{l-1}}$ during its learning round $l$ by running its own expert aggregation algorithm $f^l$, indexed by $l$: the neuron $j$ is the forecaster and the corresponding set of experts is the set of presynaptic neurons $\hat{J}_{l-1}$. A presynaptic neuron $j'$ is attributed a gain $g^{j'\to j}_m \in \R$ for neuron $j$  and the weights of $j$ evolve according to the rule
\[w^j_{m+1} = f^l(G^j_m, (G^{j'\to j}_m)_{j'\in \hat{J}_{l-1}})\]
where $G^j_m:= \sum_{m'=1}^m w^j_{m'} \cdot g^j_{m'}$ is the cumulated gain of thepostsynaptic neuron $j$ and $G^{j'\to j}_m:= \sum_{m'=1}^m g^{j'\to j}_{m'}$ is the cumulated gain of the presynaptic neuron $j'$. This corresponds to steps $7$ and $18$ of Algorithm \ref{algo}.

Most of the theoretical findings regarding expert aggregation algorithms apply to arbitrary bounded sequences of gains. Therefore, we can choose any bounded gains suitable for the learning task. For a hidden neuron $j_{S_1\cup S_2}\in J_l$ where $S_1$ and $S_2$ are feature subsets encoded in the previous layers, the gain of presynaptic neuron $j'$ is defined as
\begin{equation} \label{gain hidd}
    g^{j'\to j_{S_1\cup S_2}}_m := \hat{\rho}_m^l(\{j_{S_1},j_{S_2},j'\}) 
\end{equation}
where $\hat{\rho}_m^l(\{j_{S_1},j_{S_2},j'\}) = \brac{X^{j_{S_1},l}_{m,t}X^{j_{S_2},l}_{m,t}X^{j',l}_{m,t}}_{t\in[T]} \in [0,1]$ is the empirical correlation between neurons $j_{S_1}$, $j_{S_2}$ and $j'$. This choice of gain is designed to achieve the objective of neuron $j$ training, which is to detect correlations between neurons $j_{S_1}$ and $j_{S_2}$.

For an output neuron $k\in K$, the same set-up applies and we extend the gains defined in \cite{jaffard:hal-04065229}.
The gain of presynaptic neuron $j$ is
\begin{equation} \label{eq gain output}
    g^{j\to k}_m := \begin{cases}
          \brac{X^{j,L+1}_{m,t}}_{t\in[T]} \times \frac{N}{N^k} &\text{if } o_m^{L+1}\in k  \\
        -  \brac{X^{j,L+1}_{m,t}}_{t\in[T]} \!\times\! \frac{N}{N^{k'}} \!\times\! \frac{1}{\abs{K}-1} &\text{if } o_m^{L+1} \in k'\neq k
    \end{cases}
\end{equation}
where $N^{k'}$ is the number of objects belonging to class $k'$ used to train the set of output neurons $K$. Therefore,
when the presented object is in class $k$, \ie when neuron $k$ should spike more than the other output neurons, it attributes positive
gains to active presynaptic neurons. When the presented object is in another class, \ie when neuron $k$ should
 stay silent, it attributes losses (negative gains) to active presynaptic neurons. During the training phase of the output layer, the hidden layer $L$ is already trained so the synaptic 
weights of a neuron $j\in \hat{J}_L$ are $w^j_{M+1}$.

\begin{tcolorbox}[breakable, width=\textwidth]
\begin{defi}[Local rules]

These choices of gains establish local rules:  each neuron within a layer operates its own expert aggregation algorithm independently of the other neurons.  The synaptic weights evolution of a neuron is solely influenced by the spikes of its presynaptic neurons.  There are no backward passes, and the information regarding the true class of the presented object only plays a role in the output layer learning rule.

For a neuron $j$ in the hidden layers, the updates of the weights are solely based on correlations between presynaptic neurons. To some extent, it can be seen as a three-neurons pattern,  trying to detect if one neuron responds in synergy with a couple of fixed neurons. If this rule is not close to any rules reported in the biological literature, up to our knowledge, it has the advantage to explain to some extent why synchronization of activity is so important in practice in the neuronal code \citep{singer1997neuronal}. 

By focusing on neural correlations and by applying the pruning process, the hidden layers are trained to learn feature correlations that are strongly expressed across the object natures in the set $\Obj$. Consequently, some of the learned correlations may not be directly relevant to the classification task, even if they are informative for characterizing the set 
$\Obj$. This limitation is unavoidable unless explicit class-related information is provided to the hidden layers.

An output neuron $k$ also evolves thanks to a local learning rule that only depends on the presynaptic neurons behavior and is modulated by a reward factor. In this sense it looks close to the three-factors learning rule introduced by \cite{fremaux2016neuromodulated}.

However note that, despite their local character,  none of the rules we are applying are explicitly phrased as a gradient descent (no $\dot{w}$, derivative of $w$) and none of them take into account the postsynaptic firing rate per se, as it is usual in the biological learning rules model that have been studied in the literature. 
\end{defi}
\end{tcolorbox}

\subsection{The complete CHANI algorithm}

The overall CHANI algorithm is summarized in Algorithm \ref{algo}. Its complexity is determined by the number of calls made to the pseudo-random generator for obtaining Bernoulli variables. We need to simulate
\[
(M+N)T( \abs{I} + \max_{l=1,\dots,L} \abs{\hat{J}_l} )+ T \max_{l=1,\dots,L} \abs{J_l}
\]
random variables. Here the depth $L$ is considered a constant.




\begin{algorithm}
\nl
\textbf{Initialization:} $\hat{J}_0 := I$ \\
\nl
\For{$l=1$ {\bfseries to} $L$}{ \nl
    \textbf{Initialization:} $J_l :=  \{j_{S_1\cup S_2} \ : \ j_{S_1},j_{S_2}\in \hat{J}_{l-1} \ \text{such that} \ S_1\cap S_2 = \emptyset\}$, $\hat{J}_l := \emptyset$, $w^{j' \to j} := 1/\abs{\hat{J}_{l-1}}$ for $j\in J_l$ \\
\nl
\For{$m=1$ {\bfseries to} $M$}{ \nl
Simulate $(X^{I,l}_{m,t})_{t\in [T]}$, $(X^{\hat{J}_1,l}_{m,t})_{t\in [T]}, \dots, (X^{\hat{J}_{l-1},l}_{m,t})_{t\in [T]}$ according to Section \ref{sec input} and \eqref{defpjm} .  \\ \nl
Compute gains $g^{J_l}_m$ according to \eqref{gain hidd}. \\ \nl
\For{$j \in J_l$}{ \nl
$w^{j}_{m+1} \xleftarrow[]{} f^l(G^j_m, (G^{j'\to j}_m)_{j'\in \hat{J}_{l-1}})$ \tcp{aggregate experts}
}
}\nl
\For{$o\in\Obj$}{ \nl
Simulate $(X^{I,l}_{o,t})_{t\in [T]}$, $(X^{\hat{J}_1,l}_{o,t})_{t\in [T]}, \dots, (X^{\hat{J}_{l-1},l}_{o,t})_{t \in [T]}$, $(X^{J_l,l}_{o,t})_{t\in [T]} $ according to Section \ref{sec input} and \eqref{poj}.  \\ \nl
\For{$j \in J_l$}{ \nl
\If{ $\brac{X^{j,l}_{o,t}}_{t\in[T]} \geq s_l$ and $j\notin \hat{J}_l$}{ \nl
add $j$ to $\hat{J}_l$ \tcp{select neuron $j$}
}
}

}

}\nl
    \textbf{Initialization:} $w^{j\to k} := 1/\abs{\hat{J}_{L}}$ for $j\in \hat{J}_{L}$ \\ \nl
\For{$m=1$ {\bfseries to} $N$}{ \nl
Simulate $(X^{I,L+1}_{m,t})_{t\in [T]}$, $(X^{\hat{J}_1,L+1}_{m,t})_{t\in [T]}, \dots, (X^{\hat{J}_L,L+1}_{m,t})_{t\in [T]}$ according to Section \ref{sec output}.  \\ \nl
Compute gains $g^{K}_m$ according to \eqref{eq gain output}. \\ \nl
\For{$k \in K$}{ \nl
$w^{k}_{m+1} \xleftarrow[]{} f^{L+1}(G^k_m, (G^{j\to k}_m)_{j\in \hat{J}_{L}})$ \tcp{aggregate experts}
}}
    \vspace{-1mm}
    \textbf{Output}: $\hat{J}_1,\dots, \hat{J}_L, (w^j_{M+1})_{j\in \hat{J}_1\cup \dots\cup \hat{J}_L}, (w^k_{N+1})_{k\in K} $ \tcp{selected neurons and final weights}
    
   
    \caption{CHANI}
    \label{algo}
\end{algorithm}

\subsection{Measure of the network performance}

\subsubsection{Spiking probability discrepancies}

Since the classification rule of the network is based on who spikes the most, we measure CHANI's performance via discrepancies between firing rates or correlations. All these notions are represented in Figure \ref{fig:disc}.
\begin{figure}
    \centering
\includegraphics[width=1\linewidth]{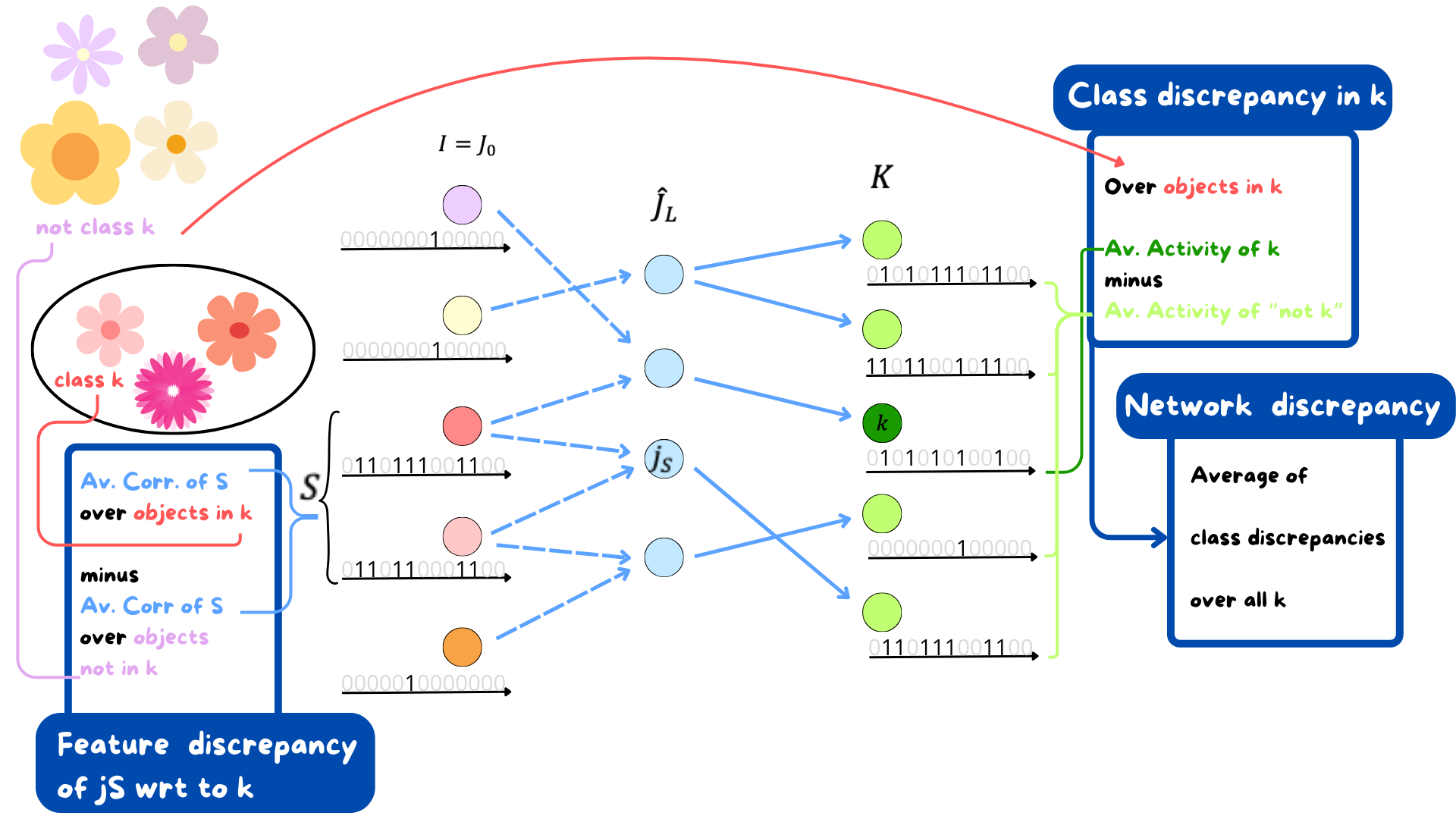}
\caption{\label{fig:disc} The three sorts of discrepancies. Av. stands for average.}
\end{figure}

First, we extend the notion of class discrepancy introduced in \cite{jaffard:hal-04065229}, which compares the activity of an output neuron to the rest of the layer, as follows. We  define {\it the average activity of neuron $k$} connected with weights $q^k$ to the final hidden layer $\hat{J}_L$ during the presentation of object $m=1,...,N$ by
$$ \hat{p}_m^k(q^k) = q^k \cdot \brac{X_{m,t}^{\hat{J}_L,L+1}}_{t=1,...,T}.$$

We want to know the performance of this choice of weights for class $k$. Hence, we  introduce for a neuron $k\in K$ in the ouput layer, the {\it class discrepancy} of neuron $k$ with weights $q^k\in \mathcal{P}_{\hat{J}_L}$:
      \[\Disc_N^k(q^k, X_{[N],[T]}^{\hat{J}_L,L+1}) :=\Brac{\hat{p}^k_m(q^k) - \hat{p}^{k'}_m(q^k) }_{\begin{subarray}{l}
      k'  \ / \ k'\neq k \\  m  \ / \ o^{L+1}_m\in k 
    \end{subarray}} \in \R .
    \]
It compares the average activity of neuron $k$ with the average activity of other neurons when presented with objects of class $k$. The larger the class discrepancy is, the more neuron $k$ overcomes the noise in order to correctly classify the objects of its class. It is a \textbf{global notion} since it involves the activity of all the output neurons. 

The average class discrepancy of the network is called the {\it network discrepancy} with weights $q^K=(q^k)_{k\in K}$
\begin{equation}\label{netdisc}
    \Disc_N(q^K, X_{[N],[T]}^{\hat{J}_L,L+1}) := \brac{ \Disc_N^k(q^k, X_{[N],[T]}^{\hat{J}_L,L+1})}_{k\in K} \in \R.
\end{equation}
The network discrepancy measures the average ability of the network to classify objects from all the classes. 




To explain how the weights are strengthen, we need to find the sets $S$ of features that are the most characteristic of a given class $k$. To do so, we extend the notion of feature discrepancy defined in \cite{jaffard:hal-04065229} for an input neuron coding for a feature to a neuron of the last hidden layer coding for a set of features.
Let $j$ be a neuron of the last hidden layer $L$, coding for a feature subset $S\in I^{L}$. The {\it feature discrepancy} of $j = j_S$ w.r.t. class $k\in K$ is the quantity
\begin{equation}\label{featdisc}
\Disc^{j_S\to k} := \rho^k(S) - \brac{\rho^{k'}(S)}_{k'\neq k} \in \R.
\end{equation}
The feature discrepancy of neuron $j_S$ quantifies how much the correlation of features of the set $S$ is relevant to class $k$: it compares the average correlation of the input neurons coding for the features belonging to the set $S$ w.r.t. class $k$ with its average correlation w.r.t. other classes (see Figure \ref{fig:disc} for an illustration). This new definition of feature discrepancy is much more complex than that of \cite{jaffard:hal-04065229}. In our previous work, the feature discrepancy of an input neuron concerned only its individual activity, with no link to the activity of other input neurons whereas here it can capture complex patterns of input neurons activity.


\subsubsection{Ideal network performance}

In order to establish a performance target for CHANI, we define below ideal layers representing ideal performances.

Let us consider a network with conditional spiking probabilities given by \eqref{poj} and \eqref{pok}, with synaptic weights given by a deterministic, fixed family $q$ on the whole network. In this case, the distribution of a hidden or output neuron $a$ is nature-only dependent and its activity when presented with an object of nature $o$, denoted $(X^a_{o,t})_{t\geq 1}$, is a sequence of Bernoulli variables. In particular,  we denote
$$p^a_o(q^a)=\mathbb{P}(X^a_{o,T}=1),$$
that is the {\it expected mean activity} of neuron $a$ once all the dependencies with respect to the previous layers are taken into account (with $T>L+1$).

\begin{definition}[Ideal activities: layers and network] \quad

\noindent $\bullet$ For all $l\in [L]$, a neuron $j=j_S$ associated to the subset $S\subset I$ of the hidden layer $l$ has an {\bf ideal activity} with constant $\gamma$ if 
$$\forall o \in \Obj, \quad p^j_o(q^j)=\new{\gamma} 
\rho_o(S). $$
A subset $H_l$ of the hidden layer $l$ with weight $q^{H_l}$ is said to be {\bf ideal} with 
constant $\gamma_{l}$ if all its neurons have an ideal activity with the same constant $\gamma_{l}$.

\noindent$\bullet$ A neuron $k\in K$ of the output layer has an {\bf ideal activity} if 
$$ \forall o \in \Obj, \quad p^k_o(q^k) >0 \text{ if and only if } o\in k.$$
The output layer with weight $q^K$ is said to be {\bf ideal} if all its neurons have an ideal activity.

\noindent$\bullet$ The network with weights $q$ is said to be {\bf ideal} when all its hidden and output neurons have an ideal activity.

\end{definition}

\begin{tcolorbox}[breakable, width=\textwidth]
\begin{defi}[About the ideal notion]

The previous notions formalize that in an ideal world,  

\noindent$\bullet$ a hidden neuron $j=j_S$ encoding a subset $S$ of features should be active when all input neurons $i\in S$ are active (\ie when the presented object has all the features of the set $S$ simultaneously), and its spiking probability should be proportional to the probability that they activate together.

\noindent$\bullet$ an output neuron $k$ would code exactly for classes $k$ without any noise.

If being ideal might depend on the weights, the behavior of an ideal layer does not depend per se on the weights  and a priori we do not know if there exist weight families for which our network is ideal. 
\end{defi}
\end{tcolorbox}


To be able to compare more precisely our weights to the best possible weights we need therefore a measure that depends on the weights: the {\it ideal discrepancy}. It measures the performance of a fixed family of weights on the output layer $q^K\in (\mathcal{P}_{H_L})^{\abs{K}}$, 
when connected to an ideal $H_L$, subset of the last layer,  with constant 1:
\begin{equation}\label{discID}
\Disc^{\id}(q^K,H_L) := 
    \brac{ \bar{p}^{k}_o(q^k,H_L)  - \bar{p}^{k'}_o(q^{k'},H_L)  }_{\begin{subarray}{l}
k\in K\\o\in k \\ k'\neq k 
\end{subarray}}
\end{equation}
 with for every $k\in K$, $\bar{p}^{k}_o(q^k,H_L) := q^k \cdot (\rho_o(S(j)))_{j\in H_L}$, where $S(j)$ denotes the subset $S$ of features such that $j=j_S$. This quantity measures the average difference between the spiking probability of an output neuron when presented with an object of its class and the other output neurons spiking probabilities: the larger it is, the better the performance of the output layer at classifying the objects. This notion is similar to the network discrepancy, but holds for an arbitrary output weight family connected to an ideal hidden layer.
 
If the output layer is connected to the ideal hidden layer $H_L$ with an arbitrary constant $\gamma_L$, then by linearity the corresponding quantity is $\gamma_L\Disc^{\id}(q^K, H_L)$. 

Note that in practice the last layer $\hat{J}_L$ is random. With $H_L$, we are considering a deterministic possible subset of neurons of the last layer, that can be put in a one-to-one map with a subset of $I_L$.

\begin{definition}[Feasible weight family]
    Let $H_L$ be a set of neurons coding for feature subsets belonging to $I_L$. We say that $q^K\in (\mathcal{P}_{H_L})^{\abs{K}}$, is 
    
    \noindent $\bullet$ a {\bf feasible weight family} w.r.t $H_L$ when
     \[
    \Disc^{\id}(q^K,H_L) >0
    \]
    We denote by 
    $\mathcal{Q}_{H_L}$ the set of feasible weight families w.r.t. $H_L$.
    
    \noindent $\bullet$ a {\bf strong feasible weight family} w.r.t. the set $H_L$ when 
    \[
    \forall k\in K, \forall o \in \Obj, \quad \bar{p}^{k}_o(q^k,H_L) > 0 \quad \text{if, and only if,} \quad o\in k,
    \]
    \ie when the output layer $K$, connected to the ideal layer $H_L$ with constant $1$ and with weights $q^K$, is ideal. 
\end{definition}


\begin{tcolorbox}[breakable, width=\textwidth]
\begin{defi}[About the feasible weight families.]
The quantity $\bar{p}^{k}_o(q^k,H_L)$ is the expected mean activity of neuron $k$  when connected with weights $q^K$ to an ideal hidden layer with constant 1.

Therefore the ideal discrepancy measures (in a deterministic fashion) the network discrepancy when the output layer is connected with weights $q^K$ to an ideal layer. The fact that there exists feasible weight families means that if one is able to have an ideal last hidden layer, then one can find weights $q^K$ to connect the last layer (encoding  features correlations) to the classes in such a way that the resulting firing rates of the neurons lead to  a good classification on average.

The notion of strong feasible weights, means that in the end, if one presents an object of class $k$ not only the neuron $k$ fires more than the other, but in fact it is the only one to fire. 
\end{defi}
\end{tcolorbox}


Note that if a weight family $q^K$ is a strong feasible family weight w.r.t. $H_L$ then it is a feasible family weight w.r.t. $H_L$.

Another important remark is the following, which links all the notions of ideal activities and feasible weights. If we are given a family $q$ of weights over the network and the last hidden layer $H_L$ is deterministic such that 

\noindent$\bullet$ $H_L$ is ideal with constant $\gamma_L$

\noindent$\bullet$ the weights of the output layer $q^K$ are strong feasible weights w.r.t. $H_L$

\noindent then the mean expected activity of an output neuron $k\in K$ satisfies
$$p_o^k(q^k)= \gamma_L \bar{p}^k_o(q^k,H_L)$$
and the output layer of the network is ideal.

\subsection{Analogy with Transformers} \label{sec transformers}
The key innovation of the transformer architecture~\citep{vaswani2017attention} is the self-attention mechanism, which allows the model to weigh the importance of different input tokens when processing each token in a sequence. This mechanism enables the model to capture long-range dependencies more effectively than traditional recurrent or convolutional neural networks. When using the specific expert aggregation EWA, CHANI shares similarities with transformers.

\noindent\textbf{Input embedding.} Objects to be classified have features which are embedded into point processes by the input layer. 

\noindent\textbf{Attention layers.} Let $l\in [L]$, $j = j_{S_1\cup S_2}\in J_l$ where $S_1$ and $S_2$ are feature subsets encoded in the previous layer, and $m\in [M]$. Then, the conditional spiking probability of neuron $j$ during the presentation of object $o_m^l$ of its learning phase reads
\[
p^{j,l}_{m,t}(w^j_{m}) = \softmax(\eta^j A^j_{m-1} \cdot B_{m-1} )\cdot X^{\hat{J}_{l-1},l}_{m,t-1}
\]
where $X^{\hat{J}_{l-1},l}_{m,t-1} = (X^{j',l}_{m,t-1})_{j'\in \hat{J}_{l-1}}$ is the activity of the previous layer at the previous time step of current object $o_m^l$, $A^j_{m-1} := (X^{j_{S_1},l}_{m',t} X^{j_{S_2},l}_{m',t})_{\begin{subarray}{c}
    1\leq m' \leq m-1,
    1\leq t \leq T
\end{subarray}}$ is the vector of correlations between $j_{S_1}$ and $j_{S_2}$ until previous object $o^l_{m-1}$, $B_{m-1} := (X^{j',l}_{m',t})_{\begin{subarray}{c}
    j'\in \hat{J}_{l-1},
    m' \in [m-1] ,
    t \in [T]
\end{subarray}}$ is the vector of previous layer activity until object $o^l_{m-1}$, and $A^j_{m-1} \cdot B_{m-1} :=\Big(\sum_{\begin{subarray}{c}
   m' \in [m-1] \\ t \in [T]
\end{subarray}}X^{j_{S_1},l}_{m',t} X^{j_{S_2},l}_{m',t}X^{j',l}_{m',t}\Big)_{j'\in \hat{J}_{l-1}}$ is the matrix of correlations between $j_{S_1}$, $j_{S_2}$ and neurons of the previous layer until previous object $o^l_{m-1}$. Therefore, the hidden layer of depth $l$ acts as an attention layer enlightening correlations between neurons of the previous layers. At the end of the learning phase, the resulting weight matrix $w^{J_L}_{M+1}$ can be seen as a score matrix which is then used to select neurons coding for high correlations. The accumulation of hidden layers is analogous to a succession of attention layers. 

\noindent\textbf{Final linear layer.} The output layer acts as a linear layer at the end of an attention module as the conditional spiking probability is linear in the activity of the last hidden layers.

\section{Main theoretical results} \label{sec theo results specif set} 

In this section, we present the principal findings of the article by conducting a thorough analysis of the network’s performance. We begin by stating the main theorem, followed by a series of intermediate theoretical results that both support its proof and offer insights extending beyond the theorem itself.
\subsection{Main theorem} \label{sec main th}

Let us define by recursion the sets $\J_l$ for all $l\in \{0,\dots,L\}$ as follows, with $s_1,\dots,s_l$ the thresholds defined in \eqref{thresh}.

\noindent $\bullet$ $\J_0:= I$.

\noindent$\bullet$ For $l\geq 1$,
\[
\J_l := \left\{\, j_{S_1\cup S_2} \;\middle|\;
\begin{aligned}
&j_{S_1}, j_{S_2}\in\J_{l-1}\text{ such that }S_1\cap S_2=\emptyset,\\
&\text{and }\ \exists o\in\Obj:\ \rho_o(S_1\cup S_2)> 2^{2^l - 1} s_l
\end{aligned}
\,\right\}.
\]
For $l\geq 1$, the sets $\J_l$ are sets of neurons coding for high feature correlations for at least one object. They are the sets targeted during hidden layer pruning phases.
\newline

Consider the network defined with $\J_l$ as hidden layers, with fixed and deterministic weights defined by 
$$\forall j'\in \J_{l-1} \text{ and }j=j_{S_1\cup S_2} \in \J_l, \quad \bar{w}^{j'\to j}= \frac{1}{2} \mathbf{1}_{j'\in \{j_{S_1},j_{S_2}\}}.$$
Define for the output layer, the subsets
$$ \J^k_L=\arg\max_{j\in \J_L} \Disc^{j\to k}$$
that maximize the feature discrepancy and finish to construct the network with weights
$$\bar{w}^{j\to k}= \frac{1}{|\J_L^k|} \mathbf{1}_{j \in \J_L^k}.$$
Our main result consists in proving that this network is CHANI's limit under certain assumptions, several of which are detailed after the theorem because of their complexities.

\begin{theorem} \label{theo principal}
Suppose we are under the CHANI EWA framework described in Section \ref{sec framework spec sett} and Assumptions \ref{assump EWA out} and \ref{assump hom} described in Sections \ref{sec out lim} and \ref{sec classes feat corr} hold. In the regime where $N\to \infty$, $ T / (NM^{L-1})\to \infty$ and $N^{-1/2} e^{cM^{1/2}} \to \infty$ where $C$ is a constant independent from $M,N$ and $T$ then, almost surely,

$\bullet$ each hidden layer converges:  $\hat{J}_l \to \J_l$,

$\bullet$ the family of weights converges: $w \to \bar{w}$.

\noindent Moreover, we have the following equivalence:

$\bullet$ the limit output family weight is a strong feasible family w.r.t. $\J_L$

$\bullet$ there exists a strong feasible family w.r.t. $\J_L$ 

$\bullet$ each  class $k \in K$ can be decomposed as unions of features correlations of size $2^L$ (Assumption \ref{assump decomp}).

\noindent If this is the case, the limit network is ideal.
 \end{theorem}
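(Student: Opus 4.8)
The plan is to establish the three conclusions---convergence of the layers, convergence of the weights, and the equivalence ending in ideality---by an induction on the depth $l$ that feeds the announced rate results (Theorems~\ref{theorem lim hid} and~\ref{theorem lim out}) into a concentration argument at each selection phase. The structural fact making this tractable is that the input layer $\J_0=I$ is exactly nature-only dependent, so once the frozen weights of the lower layers are pinned to $\bar w$ all downstream randomness is controlled; the three scales then play complementary roles in the prescribed regime---$T$ makes every empirical correlation $\hat\rho^l_m$ uniformly close to its population value, $M$ makes each hidden EWA concentrate, and $N$ makes the output EWA concentrate.

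I would first treat the hidden layers by induction on $l$. Suppose $\hat J_{l-1}=\J_{l-1}$ with frozen weights equal to $\bar w$, so that each $j'\in\J_{l-1}$ has ideal activity with constant $\gamma_{l-1}=2^{1-2^{l-1}}$. For a candidate $j=j_{S_1\cup S_2}\in J_l$, the gain~\eqref{gain hidd} of an expert $j'$ converges (as $T\to\infty$, using $X^2=X$, and then as $M\to\infty$) to the averaged correlation $\brac{\rho_o(S_1\cup S_2\cup S(j'))}_{o\in\Obj}$, which is maximized with equality by $j'=j_{S_1}$ and $j'=j_{S_2}$; Assumption~\ref{assump hom} removes any remaining tie. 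Theorem~\ref{theorem lim hid} then forces $w^j\to\bar w^j$, where $\bar w^{j'\to j}=\tfrac12\mathbf 1_{j'\in\{j_{S_1},j_{S_2}\}}$, whence $j$ inherits the ideal activity $p^j_o(\bar w)=\gamma_l\rho_o(S_1\cup S_2)$ with $\gamma_l=2^{1-2^l}$. Plugging this into the selection rule~\eqref{thresh}, as $T\to\infty$ the empirical frequency $\brac{X^j_{o,t}}_{t\in[T]}$ concentrates on $\gamma_l\rho_o(S_1\cup S_2)$, so $j$ survives pruning if and only if some $o$ satisfies $\gamma_l\rho_o(S_1\cup S_2)\ge s_l$, i.e. $\rho_o(S_1\cup S_2)>2^{2^l-1}s_l$---exactly the defining condition of $\J_l$. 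A union bound over the at most $\binom{|I|}{2^l}$ candidate neurons together with Hoeffding concentration, summable along the regime because $T/(NM^{L-1})\to\infty$, upgrades this to the almost sure eventual equality $\hat J_l=\J_l$, closing the induction and yielding the first two bullet points.

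Next I would handle the output layer and the equivalence. With $\hat J_L=\J_L$ ideal with constant $\gamma_L$ and frozen, Theorem~\ref{theorem lim out}---used with the coupling $N^{-1/2}e^{cM^{1/2}}\to\infty$, which keeps the residual hidden-weight error below the output statistical fluctuation---shows the output EWA weights converge to the feature-discrepancy maximizer $\bar w^{j\to k}=|\J^k_L|^{-1}\mathbf 1_{j\in\J^k_L}$. That a strong feasible limit entails the existence of a strong feasible family is trivial. To show existence entails Assumption~\ref{assump decomp}, I would argue by contraposition: if some class $k$ is not a union of size-$2^L$ feature correlations carried by $\J_L$, then for every $q^K$ some object $o\in k$ shares its active support with a competing class, forcing $\bar p^k_o(q^k,\J_L)$ and some $\bar p^{k'}_o$ to be simultaneously positive, contradicting strong feasibility. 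To show Assumption~\ref{assump decomp} entails a strong feasible limit, I would exhibit $\bar w^K$ as a witness: the decomposition provides, for each $o\in k$, a neuron $j_S\in\J^k_L$ with $\rho_o(S)>0$ that vanishes off class $k$, so $\bar p^k_o(\bar w^k,\J_L)>0$ if and only if $o\in k$, a property the argmax selection inherits. Finally, combining $\J_L$ ideal with constant $\gamma_L$ and $\bar w^K$ strong feasible, the end-of-section remark linking ideal layers to strong feasible weights gives $p^k_o(\bar w^k)=\gamma_L\bar p^k_o(\bar w^k,\J_L)$, so the output layer---and hence the whole limit network---is ideal.

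The main obstacle I anticipate is the joint control of the three scales across the recursion. Before passing to the limit each layer is only approximately ideal, and these errors compound multiplicatively through the constants $\gamma_l$ and interact with the temporal dependence of the Hawkes dynamics (the spike at time $t$ reads the previous layer at $t-1$), so the concentration of $\brac{X^j_{o,t}}_{t\in[T]}$ must be established uniformly over neurons and objects while the lower-layer weights are still only close to $\bar w$. Verifying that $T/(NM^{L-1})\to\infty$ and $N^{-1/2}e^{cM^{1/2}}\to\infty$ are precisely the conditions making the accumulated approximation error vanish faster than the selection margins $|\gamma_l\rho_o(S)-s_l|$ and the output discrepancy gaps is the crux, and is exactly where the quantitative Theorems~\ref{theorem lim hid} and~\ref{theorem lim out} must be invoked with care.
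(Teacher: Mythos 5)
Your proposal is correct and follows essentially the same route as the paper: the paper's own proof of Theorem~\ref{theo principal} simply combines Corollary~\ref{coro ideal hid} (a.s.\ convergence of hidden layers and weights), Corollary~\ref{coro lim output} (a.s.\ convergence of the output weights), and Theorem~\ref{theo specific case} (the three-way equivalence and ideality), which is exactly the skeleton you describe, and your layer-by-layer induction with concentration and EWA convergence mirrors the appendix proofs of those intermediate results. One small correction: the strict argmax at $\{j_{S_1},j_{S_2}\}$ in the hidden-layer gains comes from Assumption~\ref{assump rho} (decreasing correlation) together with the powers-of-two attenuation factors in the cumulated gains, not from Assumption~\ref{assump hom}, which is needed only for the output-layer equivalence.
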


\begin{tcolorbox}[breakable, width=\textwidth]
\begin{defi}[About the limit network]
 The main Theorem~\ref{theo principal} shows the most refined result of the article: the {\bf limit network is ideal} when classes are defined by combinations of feature correlations of size $2^L$: more precisely, an object $o$ belongs to class $k$ if it possesses all the features of a given subset of size $2^L$, or all the features of another such subset, or yet another, and so on. A neuron $j_S$ in a hidden layer encodes exactly the correlations among the features in its subset $S$; that is, when presented with an object $o$, its spiking probability is proportional to $\rho_o(S)$. Similarly, a neuron $k$ in the output layer encodes class $k$, meaning its spiking probability is strictly positive if and only if $o \in k$. Moreover, the output layer is entirely noise-free: an output neuron remains completely silent when the presented object belongs to a different class. While certain technical assumptions were required to establish this result, our numerical experiments in Section \ref{sec num res} indicate that CHANI performs well in practice even without all of these assumptions.
 \end{defi}
\end{tcolorbox}
 
\begin{tcolorbox}[breakable, width=\textwidth]
\begin{defi}[Spiking Neuron Convergence Conjecture] \label{intuit SNCC}
 Besides, thanks to the equivalence between the existence of a strong feasible weight family and the decomposition of each class as unions of feature correlations (Assumption \ref{assump decomp} (class decomposition)), we know that \textbf{as soon as there exists a strong feasible weight family, the weights of the output 
layer of our network converge to one of these families with high probability.} We can see this statement as 
a version of the Spiking Neuron Convergence Conjecture (SNCC) \citep{legenstein2005can} which says that spiking neural networks can learn to 
 implement any achievable transformation: this conjecture is true for CHANI in this simplified framework. This result is unprecedented since, in our study \cite{jaffard:hal-04065229}, we had to assume that the output limit weights were a feasible weight family in order to draw conclusions 
 (see Corollary $3.5$), without providing any criteria ensuring it. 
 \end{defi}
 \end{tcolorbox}
 

In the following sections, we provide detailed assumptions and theoretical results which ultimately lead to Theorem \ref{theo principal}. In this purpose, we need to work within a precise framework that we call CHANI EWA, defined by the assumptions of the following section.

\subsection{CHANI EWA Framework} \label{sec framework spec sett}

\begin{assumption}[EWA for hidden layers] \label{assump EWA hid}
   For any $l\in [L]$, the expert aggregation $f^l$ used to train the hidden layer of depth $l$ is EWA with learning rate $\eta^l = \sqrt{\frac{8\ln(\abs{\hat{J}_{l-1}})}{M}}$.
\end{assumption}
Note that this choice of $\eta^l$ maximizes the regret bound of EWA given in \cite{cesa2006prediction}. This assumption imposes no restriction on the expert aggregation algorithm $f^{L+1}$ used to train the output layer.
\begin{assumption}[Balanced] \label{assump nb obj}
    During each training session of a layer, each nature of object $o\in \Obj$ is presented the same amount of times to the network.
\end{assumption}
This is a technical assumption that we make to facilitate computations.
\begin{assumption}[$1/2$ bias] \label{assump nu}
    All hidden neurons have bias $\nu = \frac{1}{2}$.
\end{assumption}
This assumption is here to ensure that hidden neurons activity converge to ideal hidden layers. See discussion about the choice of $\nu$ in Section \ref{sec discuss nu}.
\begin{assumption}[Decreasing correlation] \label{assump rho}
    For all $I'\subset I$ such that $\rho(I')>0$, for all $i\in I\setminus I', \rho(I'\cup \{i\}) < \rho(I')$.
\end{assumption}
This assumption means that adding a neuron to a set of correlated neurons results in a loss in correlation for at least one object. It is obviously verified when input neurons spike independently of one
 another. To ensure that input neurons activity verify this assumption, one can add a small independent perturbation to 
 their activity.

\begin{assumption}[Sparse features] \label{assump half sets}
    There exist thresholds $s_1,\dots,s_L \in (0,1]$ such that the sets $(\J_l)_{0\leq l \leq L}$ defined in section \ref{sec main th} verify
    \[
\forall l\in \{0,\dots,L\} \text{ and } o\in \Obj, \quad   \abs{\{j_S\in \J_l, \rho_o(S)>0\}} \leq \frac{\abs{\J_l}}{2}.
\]
and
\begin{align*}
      &\forall l\in \{1,\dots, L\}, S\in I_l \text{ not encoded in } \J_l \text{ and } o\in \Obj, \quad \rho_o(S) < 2^{2^l-1} s_l.
\end{align*}
These thresholds are used for neuron selection \eqref{thresh}.
\end{assumption}
This assumption states that there exist thresholds such that for any object $o\in \Obj$, at most half the neurons of $I$ and $\J_l$ for $l\in[L]$ are active and spike together. Moreover, these thresholds strictly separate the neurons of the set $\J_l$ from other neurons during selection phases. This is a technical assumption that we need to study CHANI asymptotic behavior.

\subsection{Hidden layers limit behavior: formation of neuronal assemblies} \label{sec hidden layers}

In order to study CHANI's performance, which is reflected by the activity of its ouput layer, we start by investigating the behavior of its hidden layers:
in the framework CHANI EWA, we can explicitly compute the limit of hidden neurons weights.

\begin{theorem} \label{theorem lim hid}
    Suppose we are in the CHANI EWA framework. Let $\alpha>0$. There exist constants $C_1$ and $C_2$ independent of $M$ and $T$ and constants $c_1,c_2 >0$ independent of $M,T$ and the network parameters such that if $M\geq C_1$ and $T\geq C_2 M^{L-1}$,
     then with probability $1-\alpha$:
    
  \noindent $(i)$ For all $l\in [L]$, $\hat{J}_l = \J_l$.
        
    \noindent$(ii)$ Let $l\in [L]$. Then, for all $j = j_{S_1\cup S_2} \in \J_l$ where $S_1$ and $S_2$ are encoded in layer $\J_{l-1}$, at the end of learning the synaptic weights are such that
    \begin{align*}
    \Norm{w^{j}_{M+1} - \frac{1}{2}\mathbb{1}_{\{j_{S_1},j_{S_2}\}}}_2 \leq c_1 
        \Big(\mathcal{J}_{l-1} &\ln(\mathcal{J}_{l-1})^{\frac{1}{2}} \ln(L \mathcal{J}_{l-1}\alpha^{-1})^{\frac{1}{2}}\Big)^{l-1}  \Big(\Big(\frac{M^{l-1}}{T}\Big)^{1/2}
        \\
       & + e^{-c_2 \min_{l'\leq l-1} \big\{2^{-2^{l'}}\rho_{l'} \ln(\abs{\J_{l'}})^{1/2}\big\} M^{1/2}}
        \Big)
    \end{align*}
        where the {\bf limit weights} are $\mathbb{1}_{\{j_{S_1},j_{S_2}\}} := ( \mathbb{1}_{j'\in \{j_{S_1},j_{S_2}\}})_{j'\in \J_{l-1}} \in \{0,1\}^{\abs{\J_{l-1}}}$ and where $\mathcal{J}_{l-1}:= \max_{l'\leq l-1} \abs{\J_{l'}}$, whereas $\rho_{l'} := \min_{j_S\in \J_{l'}} \rho(S)$ is the minimum average correlation of a feature subset of a hidden neuron of depth $l'$.
\end{theorem}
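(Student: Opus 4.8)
The plan is to argue by induction on the layer depth $l$, establishing $(i)$ and $(ii)$ together, and to carry as induction hypothesis that every lower layer $l'\leq l-1$ has already been selected correctly ($\hat{J}_{l'}=\J_{l'}$) and that its learned weights are close enough to $\tfrac12\mathbb{1}$ that each neuron $j_S$ behaves almost ideally, namely $p^{j_S}_o\approx\gamma_{l'}\rho_o(S)$ with the explicit constant $\gamma_{l'}=2^{-(2^{l'}-1)}$. This constant is forced by the AND-gate created by Assumption~\ref{assump nu}: with weights $\tfrac12\mathbb{1}_{\{j_{S_1},j_{S_2}\}}$ and $\nu=\tfrac12$ one has $\psi=-\tfrac12+\tfrac12(X^{j_{S_1}}+X^{j_{S_2}})$, so $j$ fires with probability exactly $\tfrac12$ precisely when both presynaptic neurons fired at the previous step, and otherwise not. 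Unrolling this recursion back to the input layer, the firing of $j_S$ at time $t$ is equivalent to all $2^l$ leaves of its feature tree firing together at time $t-l$ (probability $\rho_o(S)$) together with the $2^l-1$ internal Bernoulli coins being $1$ (probability $2^{-(2^l-1)}$), which are independent; this yields $\gamma_l=2^{-(2^l-1)}$ and matches the scaling $2^{2^l-1}s_l$ appearing in the definition of $\J_l$. The base case $l=1$ is clean, the input layer being exactly i.i.d. Bernoulli in time with nature-only law.

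For the weight convergence $(ii)$ I would expand the EWA weights through their closed form. The key robust fact is that the gain of presynaptic neuron $j'$ for $j=j_{S_1\cup S_2}$ is $\hat{\rho}_m^l(\{j_{S_1},j_{S_2},j'\})$, and since $X^2=X$ for $\{0,1\}$-variables, the two good experts $j_{S_1},j_{S_2}$ receive \emph{identical} gains $\hat{\rho}_m^l(\{j_{S_1},j_{S_2}\})$ at every step. Hence their cumulated gains coincide exactly and EWA assigns them equal weights for all $m$, so the only deviation from $(\tfrac12,\tfrac12)$ is the residual mass on the other experts. For any bad expert $j_{S'}$, the induction hypothesis converts $\hat{\rho}_m^l$ into a quantity proportional to $\rho_o(S_1\cup S_2\cup S')$, whereas the good experts give one proportional to $\rho_o(S_1\cup S_2)$; Assumption~\ref{assump rho} (decreasing correlation), together with the smaller ideal-activity constant attached to three co-firing neurons, makes the expected gain gap $\Delta>0$ strict. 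The cumulated-gain gap is then $M\Delta$ up to fluctuations, so with $\eta^l=\sqrt{8\ln(\abs{\hat{J}_{l-1}})/M}$ the ratio $w^{j'}/w^{j_{S_1}}\leq\exp(\eta^l(G^{j'}-G^{j_{S_1}}))\approx\exp(-c\sqrt{M})$, producing the $e^{-c_2\cdots M^{1/2}}$ term; summing over the at most $\abs{\J_{l-1}}$ bad experts and renormalizing bounds $\Norm{w^j_{M+1}-\tfrac12\mathbb{1}}_2$ by this exponential plus the estimation error below.

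The polynomial contribution comes from replacing expected correlations by empirical ones and from propagating the randomness of the not-yet-frozen lower layers. I would control $\hat{\rho}_m^l-\mathbb{E}[\hat{\rho}_m^l\mid\text{nature}]$ by a concentration bound adapted to the bounded temporal memory of the dynamics: within one presentation the activity of a depth-$l$ neuron depends only on the inputs $l$ steps earlier, so $t\mapsto\prod X_{m,t}$ is $l$-dependent and its empirical mean concentrates at rate $(1/T)^{1/2}$; the independence of distinct objects then gives cumulated-gain fluctuations of order $(M/T)^{1/2}$. The extra factor $M^{(l-1)/2}$ and the power-$(l-1)$ prefactor $(\mathcal{J}_{l-1}\ln(\mathcal{J}_{l-1})^{1/2}\ln(L\mathcal{J}_{l-1}\alpha^{-1})^{1/2})^{l-1}$ then arise by unfolding these estimates recursively: the activity feeding layer $l$ carries the weight error of all previous layers, each union bound over the $\leq\mathcal{J}_{l-1}$ neurons of a layer contributes a high-probability factor $\ln(L\mathcal{J}_{l-1}\alpha^{-1})^{1/2}$ and an $\eta$-type factor $\ln(\mathcal{J}_{l-1})^{1/2}$, and the recursion compounds these $l-1$ times.

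Finally, statement $(i)$ follows from the selection rule once $(ii)$ is in hand. With near-ideal activity the empirical rate $\brac{X^{j}_{o,t}}_{t\in[T]}$ in the selection phase concentrates around $\gamma_l\rho_o(S)=2^{-(2^l-1)}\rho_o(S)$, so $j$ is retained iff $\rho_o(S)\gtrsim 2^{2^l-1}s_l$ for some $o$, which is exactly the defining condition of $\J_l$; the strict separation in Assumption~\ref{assump half sets}, where non-encoded sets satisfy $\rho_o(S)<2^{2^l-1}s_l$ and at most half the neurons are simultaneously active, guarantees that for $M,T$ large no spurious neuron crosses the threshold and none is missed, so $\hat{J}_l=\J_l$ on the good event. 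The main obstacle, and where I expect the real work to lie, is this concentration-and-propagation step: one must simultaneously handle the temporal dependence of the Hawkes dynamics, the randomness of the evolving weights, and a union bound over all neurons and all $L$ layers, keeping every constant uniform so that the final good event has probability $1-\alpha$. Recovering the exact power $M^{l-1}/T$ and the precise prefactor is a matter of careful bookkeeping of these compounded errors rather than any single inequality.
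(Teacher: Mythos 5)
Your proposal follows essentially the same route as the paper's proof: an induction over layers (the paper's Proposition~\ref{prop induction}) that jointly establishes $\hat{J}_l = \J_l$ and the weight bounds, using the $\nu=1/2$ AND-gate structure to get ideal activities $2^{-2^l+1}\rho_o(S)$, a gain-gap argument for EWA (the paper's Proposition~\ref{prop prelim EWA}) yielding the $e^{-c_2\sqrt{M}}$ term, Hoeffding-type concentration combined with a coupled-network argument that propagates lower-layer weight errors to produce the $(M^{l-1}/T)^{1/2}$ term, and threshold separation via Assumption~\ref{assump half sets} for the selection phases. Your minor variations — the pathwise observation that $j_{S_1}$ and $j_{S_2}$ receive identical gains (the paper instead compares to expectation-based weights via the softmax Lipschitz property), and invoking $l$-dependence where the paper establishes full conditional independence of the time-$t$ triple products given $\mathcal{F}_{l-1}$ — do not change the architecture of the argument.
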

This theorem states that  the sets of interest $\J_l$ are selected with high probability. Besides, the weights of a hidden neuron $j=j_{S_1\cup S_2}$ converge to uniform distribution on neurons $j_{S_1}$ and $j_{S_2}$ with an error term which is small when $M\gg 1$ and $T\gg M^{L-1}$. The dependency on the size of the selected sets $\abs{J_l}$ also shows that the fewer neurons selected, the better. When taking only into account the dependency on $M$ and $T$, the error term of the weights of layer $l$ is in  $O((\frac{M^{l-1}}{T})^{1/2} + e^{-C\sqrt{M}})$ where $C$ is a constant independent of $M$ and $T$. The exact error term is given in the proof of Theorem \ref{theorem lim hid} (see Appendix \ref{sec proof th hid}). It can be decomposed in two terms: the first one comes from the randomness induced by the spikes, as well as the accumulation of the errors coming from previous layers. The second one comes from the specific use of the expert aggregation algorithm EWA. This second term decreases as the average correlation of the feature subset of hidden neurons grows: the more hidden neurons describe high correlations between input neurons the better.


\begin{tcolorbox}[breakable, width=\textwidth]
\begin{defi}[About the convergence of hidden layers.]
With our assumptions, it means that the hidden neuron $j_S$  meant to detect correlations of features in $S=S_1\cup S_2$ finishes its learning by being linked only to $j_{S_1}$ and $j_{S_2}$. It could have been wired like that in hard from the beginning and just the pruning part could have been kept. But this would have missed the biological interpretation point of view, which is that there is progressive learning of the correlations. An even more realistic point of view would have been, as quickly mentionned in Box \ref{idneur}, that a neuron $j$ of an hidden layer picks at random two presynaptic neurons and focus on detecting the correlation of these two. Then we would need even more neurons as a first step in an hidden layer before pruning to detect all necessary connections. If the theoretical results are likely to be of the same flavor, this would have overcomplexified our procedure and we decided to not go down this path. 

Note also that the gains of the presynaptic neurons used here \eqref{gain hidd}  are there to eventually reinforce link of $j_S$ with other neurons $j'$ that migh detect the same correlation as what $j_S$ wants to do. In this sense, it might be looked as a classical "fire together, wire together" rule, ie the Hebbian rule which would focus only on correlation, and this is also this mecanism that makes the link with Transformers. However, with Assumption \ref{assump rho}, we are in fact assuming that adding $j' \neq j_{S_1}$ and $\neq j_{S_2}$ will degrade the correlation strength, hence the fact that neuron $j'$ is not reinforced through the local learning rule. It is possible that in a more complex network, with for instance loops,  one can use additional connections to some $j'$ to strengthen the answer of the post-synaptic neurons.
\end{defi}
\end{tcolorbox}

\begin{corollary}
    \label{coro ideal hid}
  Suppose we are in the CHANI EWA framework. When $M$ tends to infinity with $T /M^{L-1}$ tending to infinity as well, then almost surely 
  
\noindent $\bullet$ For all $l\in [L]$,  $\hat{J}_l\to \J_l$.

\noindent $\bullet$ For all $j=j_{S_1\cup S_2} \in \J_l$, $w_{M+1}^j\to \frac{1}{2}\mathbb{1}_{\{j_{S_1},j_{S_2}\}}$.

\noindent Moreover, each of these limit hidden layers are ideal with constant $\gamma_l=2^{-2^l +1}$, for $l\in [L]$.  
\end{corollary}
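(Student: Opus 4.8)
The plan is to split the statement into two essentially independent pieces: the two convergence bullets are an almost-sure upgrade of the quantitative Theorem~\ref{theorem lim hid} via Borel--Cantelli, whereas the ``ideal with constant $\gamma_l$'' claim is a purely deterministic computation on the limit network $(\J_l,\bar w)$.

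For the convergence, I would instantiate Theorem~\ref{theorem lim hid} along a sequence of parameters with $M\to\infty$ and $T/M^{L-1}\to\infty$, taking the confidence level $\alpha=\alpha_M$ summable (e.g.\ $\alpha_M=M^{-2}$). The point is that the bound in part $(ii)$ depends on $\alpha$ only logarithmically, through $\ln(L\mathcal{J}_{l-1}\alpha^{-1})^{1/2}$, while $\mathcal{J}_{l-1}$ is fixed by the network; since the prefactor $(M^{l-1}/T)^{1/2}=\big(M^{l-L}(M^{L-1}/T)\big)^{1/2}\to 0$ for every $l\le L$ (using $M^{L-1}/T\to 0$) and the term $e^{-c_2(\cdots)M^{1/2}}\to 0$, the whole bound still tends to $0$ even after multiplying by the slowly growing logarithmic factor. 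Borel--Cantelli then ensures that, almost surely, the failure event of Theorem~\ref{theorem lim hid} occurs for only finitely many $M$, so eventually $\hat J_l=\J_l$ (hence $\hat J_l\to\J_l$ in the discrete sense) and $\Norm{w^j_{M+1}-\tfrac12\mathbb{1}_{\{j_{S_1},j_{S_2}\}}}_2\to 0$, i.e.\ $w^j_{M+1}\to\bar w^j$.

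For the ``moreover'' part I would work directly with the deterministic limit network. With $\nu=\tfrac12$ (Assumption~\ref{assump nu}) and $\bar w^j=\tfrac12\mathbb{1}_{\{j_{S_1},j_{S_2}\}}$, inspecting the three possible values of $(\cdot)_+$ in \eqref{poj} shows the conditional probability collapses to $p^{j}_{o,t}=\tfrac12\,X^{j_{S_1}}_{o,t-1}X^{j_{S_2}}_{o,t-1}$: neuron $j_{S_1\cup S_2}$ fires with probability $\tfrac12$ exactly when both parents fired at the previous step. I would then prove by induction on depth the following strengthened statement, needed to close the recursion: for any family $j_{T_1},\dots,j_{T_r}$ of neurons of a common layer $\J_{l'}$ with pairwise disjoint $T_a$, and any $t\ge l'+1$,
\[
\mathbb{E}\Big[\prod_{a=1}^r X^{j_{T_a}}_{o,t}\Big]=\big(2^{-2^{l'}+1}\big)^{r}\,\rho_o\Big(\bigcup_{a} T_a\Big).
\]
The base case $l'=0$ is the definition of $\rho_o$ for input neurons (constant $1$). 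For the step, conditioning on $\mathcal F^l_{o,t-1}$ and using conditional independence of distinct neurons of a layer given the past, each factor splits as $\tfrac12\,X^{\cdot}_{o,t-1}X^{\cdot}_{o,t-1}$ over its two parents, producing a factor $2^{-r}$ times a product over the $2r$ parents, which lie in $\J_{l'-1}$ and again have pairwise disjoint feature sets; the arithmetic $2^{-r}\cdot(2^{-2^{l'-1}+1})^{2r}=(2^{-2^{l'}+1})^{r}$ yields the claim. Taking $r=1$, $T_1=S$, $l'=l$ and $t=T>L+1$ gives $p^{j_S}_o(\bar w^{j_S})=2^{-2^l+1}\rho_o(S)$, i.e.\ ideal activity with constant $\gamma_l=2^{-2^l+1}$, the input layer $\J_0$ being ideal with $\gamma_0=1$ by the base case.

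The Borel--Cantelli part is routine; the substance is the induction, where two points need care. First, one must use the correct independence: distinct neurons of the same layer are independent \emph{given} $\mathcal F^l_{o,t-1}$, whereas the input neurons at the leaf level may be mutually correlated at equal times, which is exactly why the leaf contribution is the joint quantity $\rho_o(\bigcup_a T_a)$ and not a product of marginals. Second is the bookkeeping that, since the trees below each $j_{T_a}$ are balanced of equal depth, all leaves are reached at the \emph{same} time step, so the recursion terminates simultaneously and the per-level factors $(\tfrac12)^{2^d}$ telescope to $(\tfrac12)^{2^l-1}=2^{-2^l+1}$; any mismatch in these depths would break the clean constant.
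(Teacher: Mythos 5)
Your proposal follows essentially the same route as the paper's own (very terse) proof: the paper obtains the ideal constant $\gamma_l=2^{-2^l+1}$ by citing ``the same calculation as in the proof of Theorem~\ref{theorem lim hid}'' — namely the unrolling of $\Bar{G}^{j'\to j}_M$ and of $\mathbb{E}[Z^{j,l}_{o,1}\mid\mathcal F_l^{\text{learn}}]$, which telescopes the per-node factor $\tfrac12$ over the $2^l-1$ internal nodes of the balanced binary tree — and it gets the two convergence bullets by observing that the error term of Theorem~\ref{theorem lim hid} tends to zero. Your induction on depth is exactly that unrolling, just organized as a formal recursion; it is in fact written more carefully than the paper's version, which at one point factorizes $\mathbb{E}\big[\prod_{i} X^{i,l}_{m,t}\mid\mathcal F_{l-1}\big]$ into a product of marginals even though equal-time correlations between input neurons are explicitly allowed, before recombining the product into $\rho$. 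Your formulation, which keeps the joint quantity $\rho_o\big(\bigcup_a T_a\big)$ at the leaves throughout, is the correct way to handle this, and your arithmetic $2^{-r}\,(2^{-2^{l'-1}+1})^{2r}=(2^{-2^{l'}+1})^{r}$ checks out.

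One caveat concerns your Borel--Cantelli step. The paper does not attempt it at all (its ``almost surely'' is asserted directly from the vanishing of the fixed-$\alpha$ error bound), and your attempt to supply the missing rigor contains a quantifier slip: a vanishing prefactor times a ``slowly growing logarithmic factor'' need not vanish. Any summable choice of $\alpha_M$ forces $\ln(1/\alpha_M)\gtrsim\ln M$, so the bound of Theorem~\ref{theorem lim hid} at layer $l=L$ becomes of order $\big((M\ln M)^{L-1}/T\big)^{1/2}$; taking $T=M^{L-1}\sqrt{\ln M}$, which satisfies the corollary's hypothesis $T/M^{L-1}\to\infty$, this quantity diverges already for $L=2$. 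Hence your argument actually proves the corollary under the slightly stronger hypothesis $T/\big(M^{L-1}(\ln M)^{L-1}\big)\to\infty$ (which is also what guarantees that the $\alpha$-dependent thresholds $C_1,C_2$ of the theorem are eventually satisfied along the sequence $\alpha_M=M^{-2}$). This is a genuine but minor gap relative to the statement as written — and it is one the paper's own two-line proof never confronts, so your version remains the more rigorous of the two; to be fully faithful to the stated hypotheses one would either need to accept the strengthened rate or interpret the paper's conclusion as convergence in probability.
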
 

\begin{tcolorbox}[breakable, width=\textwidth]
\begin{defi}[About neuronal assemblies.]
Combined with Theorem \ref{theorem lim hid}, this corollary ensures that 
 at the limit the hidden layers of our network code exactly for feature correlations. 
 A hidden neuron is activated when presented with objects having a certain combination of features (\ie, having simultaneously all the features of a certain set). 
 If this combination can be found in several classes, then the same neuron will be activated, and can therefore help to represent more than one class: in this sense, CHANI produces neuronal assemblies.
 These results, 
 valid for any number of hidden layers, are unprecedented since in our previous work \cite{jaffard:hal-04065229} 
 HAN did not have hidden layers.

\end{defi}
\end{tcolorbox}

\subsection{Average learning}

Now that we studied the hidden layers behavior, we can analyze the network's performance in the classification task. In order to so, we compare the network discrepancy \eqref{netdisc}, which measures the average performance of the network during learning, to 
the largest ideal discrepancy of a feasible weight family \eqref{discID}, which represents an ideal performance. Let 
\begin{equation}\label{Ehid}
E_{\text{hid}}(M, T, \alpha) :=L \sqrt{\mathcal{J}_L} \max_{l \in [L], j\in\J_l}  \Norm{w^j_{M+1} - \Bar{w}^j}_2
\end{equation}
the maximum error term of a neuron of hidden layer, bounded in Theorem \ref{theorem lim hid}.

Note that if we have assumed EWA on the hidden layer (Assumption \ref{assump EWA hid}), one can use any expert agregation algorithm that they want on the last layer as long as the following assumption holds.

\begin{assumption}[Output neurons regret bound] \label{assumption regret bound out}
The expert aggregation algorithm $f^{L+1}$ used to train the output layer is such that for any set of experts $E$ and any deterministic sequence of gains $g^E_{[N]}:=(g^e_m)_{\begin{subarray}{l}
    e\in E \\ m \in [N]
\end{subarray}}$ taking value in $ [a,b]$, 
\[R_M \leq C(b-a) \sqrt{\ln(\abs{E})M}\]
where $C$ is a constant. 
\end{assumption}
Note that both EWA and PWA verify this bound for certain parameters (see \cite{cesa2006prediction} for details). 

By combining Theorem \ref{theorem lim hid} and Assumption \ref{assumption regret bound out} (Output neurons regret bound), we get the following result. 

\begin{corollary} \label{coro average}
    Suppose  we are in the CHANI EWA framework and Assumption \ref{assumption regret bound out} (output neurons regret bound) holds. Let $\alpha >0$. Suppose $M\geq C_1$ and $T\geq C_2 M^{L-1}$ where $C_1$ and $C_2$ are the constants defined in 
    Theorem \ref{theorem lim hid} and $\mathcal{Q}_{\J_L}$, the set of feasible weight families w.r.t. the ideal set of neurons $\J_L$, is non-empty. Then with
     probability $1-\alpha$ the conclusions of Theorem \ref{theorem lim hid} hold and there exists a constant $c>0$ independent of the network parameters and 
     $\gamma_L = 2^{-2^L + 1}$ such that
    \begin{align*}
        \Disc_N(w^K_{[N]}, X_{[N],[T]}^{\hat{J}_L,L+1} ) \geq &\gamma_L \max_{q^K\in \mathcal{Q}_{\J_L}} \Disc^{\id}(q^K, \J_L) \\
        &-c \left(\sqrt{\frac{\abs{\Obj}}{NT}\ln\Big(\frac{L\abs{\J_L}}{\alpha}\Big)} +\abs{\Obj} \sqrt{\frac{\ln(\abs{\J_L})}{N}} + E_{\text{hid}}(M,T, \alpha)  \right).
    \end{align*}
\end{corollary}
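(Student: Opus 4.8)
The plan is to route the network discrepancy through the regret of the output neurons' aggregation and then through a concentration argument that replaces the random, finite-$T$ empirical quantities by their ideal counterparts. Throughout I work on the high-probability event of Theorem~\ref{theorem lim hid}, on which $\hat{J}_l=\J_l$ for every $l$ and $\max_{l,j}\Norm{w^j_{M+1}-\bar{w}^j}_2$ is controlled, so that $E_{\text{hid}}(M,T,\alpha)$ is the effective error inherited from the hidden layers. I intersect this with the concentration event of the last step below, each holding up to probability $\alpha/2$, and absorb the factor two into the constants and into the $\ln(L\abs{\J_L}/\alpha)$ terms.

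First I would record the exact algebraic identity produced by the output gain choice \eqref{eq gain output}. Writing $\hat{p}^k_m(q)=q\cdot\brac{X^{\hat{J}_L,L+1}_{m,t}}_{t\in[T]}$ and using Assumption~\ref{assump nb obj} (balanced) together with the normalisations $N/N^{k'}$ and $1/(\abs{K}-1)$, one checks that for every fixed $q$ one has $\frac1N\sum_{m=1}^N q\cdot g^{\hat{J}_L}_m=\Disc_N^k(q,X^{\hat{J}_L,L+1}_{[N],[T]})$, while the learned cumulated gain satisfies $\frac1N G^k_N=\Disc_N^k(w^k_{[N]},X^{\hat{J}_L,L+1}_{[N],[T]})$. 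Thus the class discrepancy is, by design, exactly the per-round gain of neuron $k$. Fixing the deterministic maximiser $q^K_\star\in\arg\max_{q^K\in\mathcal{Q}_{\J_L}}\Disc^{\id}(q^K,\J_L)$ (deterministic because $\J_L$ is), the definition of regret gives pathwise $\Disc_N^k(w^k_{[N]},X)\geq\Disc_N^k(q^k_\star,X)-R^k_N/N$. Since the gains \eqref{eq gain output} lie in a range of size $b-a=O(\abs{\Obj})$ under the balanced assumption (as $N^k\geq N/\abs{\Obj}$), Assumption~\ref{assumption regret bound out} yields $R^k_N/N\leq C(b-a)\sqrt{\ln(\abs{\J_L})/N}$, and averaging over $k\in K$ produces the regret term $\abs{\Obj}\sqrt{\ln(\abs{\J_L})/N}$.

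Next I would concentrate $\brac{\Disc_N^k(q^k_\star,X)}_{k\in K}$ around its conditional mean given the frozen hidden weights. That conditional mean is the ``self-type'' discrepancy built from the actual hidden spiking probabilities $p^j_o(w^j_{M+1})$; a relabelling of the ordered pair $(k,k')$ shows that, after averaging over $k$, its ``neuron $k$ on class-$k'$ objects'' terms coincide with the ``neuron $k'$ on class-$k$ objects'' terms of the cross-type ideal discrepancy, so that under the balanced assumption it equals $\gamma_L\Disc^{\id}(q^K_\star,\J_L)$ up to the replacement of $p^j_o(w^j_{M+1})$ by the ideal $\gamma_L\rho_o(S(j))$ supplied by Corollary~\ref{coro ideal hid}. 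That replacement is Lipschitz in $w^j_{M+1}-\bar{w}^j$ and is therefore controlled by $E_{\text{hid}}(M,T,\alpha)$. The remaining fluctuation of the time-averaged spike trains $\brac{X^{j}_{m,t}}_{t}$ of the frozen hidden neurons around $p^j_o$, controlled over the $\sim NT$ observations by a Bernstein/Hoeffding bound with a union bound over the $\abs{\Obj}$ natures and the $\abs{\J_L}$ neurons, produces $\sqrt{\tfrac{\abs{\Obj}}{NT}\ln(L\abs{\J_L}/\alpha)}$. Chaining the three inequalities gives exactly the claimed bound.

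The main obstacle will be this last concentration: the selected hidden neurons fire as a cascade of $L$ coupled discrete Hawkes processes, so the summands $X^{j}_{m,t}$ are neither independent across $t$ nor across layers, and one must show that their time averages still concentrate at the stationary means $p^j_o$. This is precisely where the regime $T/(NM^{L-1})\to\infty$ enters: it guarantees enough burn-in and mixing for the $L$-step cascade that the transient bias is negligible against the $1/\sqrt{NT}$ statistical fluctuation, while simultaneously forcing the careful bookkeeping of the $\abs{\Obj}$ and $\ln(L\abs{\J_L}/\alpha)$ factors through the union bound. A secondary, more notational difficulty is keeping the self-type class discrepancy and the cross-type ideal discrepancy aligned through the relabelling symmetry, which holds only after averaging over all classes and relies essentially on the balanced assumption.
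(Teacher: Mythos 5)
Your overall route is the same as the paper's: you combine (i) the output-layer regret bound (this is exactly Proposition~\ref{prop reg class disc}, whose proof is the gain/discrepancy identity you record, applied with $\xi=1/\abs{\Obj}$ coming from the balanced assumption), (ii) concentration of the empirical discrepancy at a fixed feasible weight family around its conditional mean given the frozen hidden weights, and (iii) a coupling/Lipschitz replacement of the learned hidden weights by the limit weights $\bar{w}$, which produces $E_{\text{hid}}(M,T,\alpha)$ and turns the conditional means into $\gamma_L\Disc^{\id}(q^K,\J_L)$. The three error terms you obtain match the paper's term by term, and your relabelling of the pairs $(k,k')$ is the same identity the paper proves inside Proposition~\ref{prop reg class disc}.

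However, your treatment of what you call the main obstacle is mistaken, on two counts. First, the dependence problem you describe does not exist. Because the network is purely feed-forward with one-step delays, no self-interaction, and fresh randomness at each time step (the i.i.d.\ input spikes and the uniform variables of the coupling construction of Appendix~\ref{def coupled}), conditionally on $\mathcal{F}_L$ (frozen hidden weights) each spike $X^{j,L+1}_{m,t}$ of a last-layer neuron is a deterministic function of the randomness along the diagonal of (layer, time) pairs $(L,t),(L-1,t-1),\dots,(0,t-L)$, and these diagonals are pairwise disjoint for distinct $t$ (and distinct $m$). Hence, for each fixed $j$, the variables $(X^{j,L+1}_{m,t})_{m,t}$ are \emph{exactly} conditionally independent; this is precisely why the paper can invoke its Hoeffding-type Proposition~\ref{prop prelim hoef} directly, with no stationarity, mixing, or burn-in argument whatsoever (the only transient is the trivial first $L$ time steps induced by the convention $X_{m,0}=0$). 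Second, the regime you invoke to handle your purported obstacle, $T/(NM^{L-1})\to\infty$, is not an assumption of Corollary~\ref{coro average}: the corollary only assumes $M\geq C_1$ and $T\geq C_2M^{L-1}$, inherited from Theorem~\ref{theorem lim hid}; the $T/(NM^{L-1})$ regime belongs to Corollary~\ref{coro lim output} and Theorem~\ref{theo principal}, where the output \emph{weights} must converge. So the mixing/burn-in argument you sketch would be resting on an assumption you do not have, and it is in any case unnecessary, since conditional independence holds exactly and the concentration step is the easiest part of the proof rather than its bottleneck.
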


This corollary establishes that CHANI's network discrepancy exceeds the ideal discrepancy of the best feasible weight family possible multiplied by a constant $\gamma_L$, minus an error term which is small
 when $M\gg 1$, $T \gg M^{L-1}$ and $N\gg 1$, and increases with the number of selected neurons. This error term is threefold: the first part comes from the randomness of the spikes, the second one comes from the regret bound of the expert aggregation of the output layer $f^{L+1}$, and the third part comes from the approximation error between hidden layers weights and their limit. When taking only into account the dependency in $T$, $M$ and $N$, the overall error term is in
\[O\Big(N^{-1/2} + \Big(\frac{M^{L-1}}{T}\Big)^{1/2} + e^{-C\sqrt{M}}\Big)\]
where $C>0$ is a constant independent of $M, N$ and $T$. Note that this corollary holds for any expert aggregation used to train the output layer, as long as it verifies Assumption \ref{assumption regret bound out} (output neurons regret bound).

In other words, on average, CHANI performs as well as it would if the output layer were connected to the ideal hidden layer $\J_L$ with constant $\gamma_L$ with the best possible feasible weight family. 
This corollary compares CHANI's network discrepancy to a deterministic quantity which does not depend on CHANI itself, and which represents 
an ideal performance where the hidden layers detect neuronal synchronization and the output layer succeeds to rewrite the classes in terms of combination of correlations between $2^L$ features.  In Section \ref{sec reg output} Proposition \ref{prop reg class disc}, we give a weaker version of this result under more general assumptions, in which we compare CHANI's network discrepancy to the best network discrepancy using CHANI's own hidden layers.

Note that the error term increases with the depth $L$, and the constant $\gamma_L$ decreases with $L$, which suggest that deeper networks give worse bounds. However, a major assumption is the existence of a feasible weight family. Since adding layers increases the complexity of the classes which can be represented by the network, this assumption is more likely to be verified for large $L$. 

This corollary can be related to Theorem $3.3$ of \cite{jaffard:hal-04065229}, where HAN network discrepancy is compared to the best safety discrepancy of a feasible weight family, a quantity describing how well 
a feasible weight family classifies objects. However, this new result is much more general, for several reasons. Firstly, in the current work, CHANI can comprise any number of hidden layers, whereas in our previous work
 HAN is restricted to having solely an input and an output layer. Secondly, both of these results assume the existence of a feasible weight family. However, in \citep{jaffard:hal-04065229}, this assumption is considerably more restrictive, as a feasible weight family can only exist if the classes can be described as simple combinations of features. In our current study, a feasible weight family is achievable when classes can be characterized as combinations of feature correlations, the complexity of which grows with the depth $L$ (see conditions for the existence of strong feasible weights families in section \ref{sec classes feat corr}). In particular, if a feasible weight family does not already exist, the addition of layers may facilitate its emergence.


\begin{tcolorbox}[breakable, width=\textwidth]
\begin{defi}[Oracle inequality and global optimization]
This result can be seen as an oracle inequality in the sense that each individual neuron does not know what the other neurons in the network do since all rules are local. Still the performance of the global network is as good as (up to a multiple constant) the best choice of weight $q^K$ on the output layer, coupled with ideal hidden layers that perfectly detect  correlation. In this sense, it could also be compared to a global optimisation of the network discrepancy on the weights on the output layer. However it cannot be compared to a global optimization of the network discrepancy with respect to the weights on all the layers, because nothing says that another choice of weights for the hidden layers would not lead to a better discrepancy, in particular if the hidden layer do not encode correlations of features anymore.

\end{defi}
\end{tcolorbox}

\subsection{Whole network limit behavior}
\subsubsection{Output layer limit behavior} \label{sec out lim}
In this section, we work in the framework CHANI EWA, with the additional assumption that the expert aggregation used for the output layer is EWA as well (which is a stronger assumption than Assumption \ref{assumption regret bound out} (Output neurons regret bound)). This allows us to end the asymptotic study started in section \ref{sec hidden layers} by computing the limit output weights.

\begin{assumption}[EWA for the output layer] \label{assump EWA out}
    The expert aggregation $f^{L+1}$ used to train the output layer is EWA with learning rate $\eta^{L+1} = \frac{1}{\abs{\Obj}}\sqrt{\frac{2\ln(\abs{\J_L})}{N}}$.
\end{assumption}
Note again that this choice of $\eta^{L+1}$ minimizes the regret bound of EWA given in \cite{cesa2006prediction}. For $\alpha \in (0,1]$ let 
\[
E_{\text{out}}(T,\alpha):=  \sqrt{\frac{\ln(\abs{\J_L})}{\abs{\Obj}T} \ln\Big(\frac{L\abs{\J_L}\abs{K}}{\alpha}\Big) }
\]

We recall that the feature discrepancy $\Disc^{j\to k}$ of a hidden neuron $j=j_S$ of the last hidden layer w.r.t. an output neuron $k$ quantifies how much the feature subset $S$ is relevant to describe class $k$ \eqref{featdisc}.

\begin{theorem} \label{theorem lim out}
     Suppose we are in the CHANI EWA framework and Assumption \ref{assump EWA out} holds. Let $\alpha \in (0,1]$. 
     Suppose $M\geq C_1$ and $T\geq M^{L-1}C_2$ where $C_1$ and $C_2$ are the constants defined in 
     Theorem \ref{theorem lim hid}. We recall that 
     $\J_L^k = \arg\max_{j\in \J_L} \Disc^{j\to k}$ and $\mathbb{1}_{\J_L^k} = (\mathbb{1}_{j\in \J_L^k})_{j\in \J_L}$.  
     Then there exist constants $c_3, c_4>0$ independent of the network parameters, such that with probability $1-\alpha$, the conclusions of Theorem \ref{theorem lim hid} hold and for all $k\in K$
     
$\bullet$ if $\J_L^k = \J_L$ then 
\begin{align*}
    &\Norm{w^{k}_{N+1} -\frac{1}{\abs{\J_L^k}}\mathbb{1}_{\J_L^k} }_2 \leq c_3 \Big( \sqrt{\abs{\J_L}\ln(\abs{\J_L})N} E_{\text{hid}}(M, T, \alpha) + E_{\text{out}}(T,\alpha)  \Big).
\end{align*}

$\bullet$ if $\J_L^k \varsubsetneq \J_L$  let $\Delta^k := \max_{j\in \J_L^k} \Disc^{j\to k} 
        - \max_{j\in \J_L \setminus \J_L^k} \Disc^{j\to k}$. Then
        \begin{align*}
            \Norm{w^{k}_{N+1} -\frac{1}{\abs{\J_L^k}}\mathbb{1}_{\J_L^k} }_2  \leq c_3 \Big( \sqrt{\abs{\J_L}\ln(\abs{\J_L})N} & E_{\text{hid}}(M, T, \alpha) \\
            & + E_{\text{out}}(T,\alpha)  +e^{-c_4 \frac{\Delta^k }{2^{2^L}}\sqrt{\ln(\abs{\J_L})N}}
            \Big).
        \end{align*}
\end{theorem}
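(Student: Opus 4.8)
The plan is to exploit the explicit softmax form of EWA and reduce the whole statement to a concentration bound on the cumulated output gains $G^{j\to k}_N$, which I will then relate to the feature discrepancies $\Disc^{j\to k}$. First I would condition on the high-probability event of Theorem~\ref{theorem lim hid}, on which $\hat{J}_l=\J_l$ for every $l$ and each frozen hidden weight $w^j_{M+1}$ lies within $E_{\text{hid}}(M,T,\alpha)/(L\sqrt{\mathcal{J}_L})$ of its limit $\bar w^j$. On this event the output weights are given explicitly by
\[
w^{j\to k}_{N+1}=\frac{\exp(\eta^{L+1} G^{j\to k}_N)}{\sum_{j'\in\J_L}\exp(\eta^{L+1} G^{j'\to k}_N)},\qquad G^{j\to k}_N=\sum_{m=1}^N g^{j\to k}_m,
\]
so everything reduces to understanding $G^{j\to k}_N$.

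The key algebraic step identifies the idealized value of these gains. Using the gain definition~\eqref{eq gain output}, the balanced assumption (Assumption~\ref{assump nb obj}), and the fact that an ideal last hidden layer satisfies $p^{j_S}_o=\gamma_L\rho_o(S)$ with $\gamma_L=2^{-2^L+1}$ (Corollary~\ref{coro ideal hid}), a term-by-term computation of the expectations gives
\[
\sum_{m=1}^N \mathbb{E}\big[g^{j_S\to k}_m\big]=\gamma_L N\big(\rho^k(S)-\brac{\rho^{k'}(S)}_{k'\neq k}\big)=\gamma_L N\,\Disc^{j_S\to k},
\]
where the scalings $N/N^k$ and $N/N^{k'}$ cancel the class sizes precisely because of Assumption~\ref{assump nb obj} (recall~\eqref{featdisc}). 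This is exactly what makes the neurons maximizing the feature discrepancy dominate in the softmax, and it is why the limit is uniform on $\J_L^k=\arg\max_j\Disc^{j\to k}$.

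Next I would bound the deviation $G^{j\to k}_N-\gamma_L N\,\Disc^{j\to k}$ by two contributions. The first is the hidden-layer approximation error: the frozen weights differ from $\bar w$, so the true spiking probabilities of the hidden neurons deviate from $\gamma_L\rho_o(S)$; propagating this deviation through the scalar product $w\cdot X$ via Cauchy--Schwarz (using the sparsity of Assumption~\ref{assump half sets} to bound $\Norm{X}_2$) and summing over the $N$ objects produces a deterministic gain error proportional to $N$ times the per-neuron hidden error. The second is the spike fluctuation: conditionally on the frozen weights, the $N$ per-object gains are independent, and each $\brac{X^{j,L+1}_{m,t}}_{t\in[T]}$ deviates from its conditional mean by $O(1/\sqrt{T})$; since the spikes within an object are temporally dependent through the Hawkes mechanism, I would use a martingale (Azuma/Bernstein) bound rather than a plain Hoeffding argument, obtaining a deviation of order $\sqrt{N/T}$ up to logarithmic factors with probability $1-\alpha$. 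Feeding both bounds into the $\ell_2$-stability of the softmax map (whose Lipschitz constant is $\eta^{L+1}$) and bookkeeping the $\abs{\J_L}$ coordinates reproduces the two announced scales: the $\sqrt{\abs{\J_L}\ln(\abs{\J_L})N}\,E_{\text{hid}}$ term from the approximation error and the $E_{\text{out}}(T,\alpha)$ term from the spike fluctuation.

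Finally I would separate the two cases. When $\J_L^k=\J_L$ all idealized exponents coincide, so the idealized softmax is exactly uniform on $\J_L$ and the deviation of $w^k_{N+1}$ is controlled by softmax stability applied to the gain errors alone, giving the first bound. When $\J_L^k\varsubsetneq\J_L$, I would split $\J_L$ into the optimal set $\J_L^k$ and its complement: for $j'\notin\J_L^k$ the idealized exponent is smaller by $\eta^{L+1}\gamma_L N\Delta^k$, which equals $c\,\frac{\Delta^k}{2^{2^L}}\sqrt{\ln(\abs{\J_L})N}$ up to a constant, so the total EWA mass on the complement is exponentially small and produces the extra $e^{-c_4\frac{\Delta^k}{2^{2^L}}\sqrt{\ln(\abs{\J_L})N}}$ term; on $\J_L^k$ the exponents again coincide and the $\J_L^k=\J_L$ analysis applies. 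The main obstacle is the concentration step: the hidden-layer error is itself random and enters the spike-generation mechanism nonlinearly, so the martingale concentration must be carried out conditionally on the Theorem~\ref{theorem lim hid} event while correctly handling the temporal dependence within each object, and keeping the $\gamma_L=2^{-2^L+1}$ and $\sqrt{\abs{\J_L}}$ factors sharp throughout is where most of the technical bookkeeping lies.
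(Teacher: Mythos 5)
Your proposal follows essentially the same route as the paper's proof: condition on the event of Theorem~\ref{theorem lim hid}, identify the idealized cumulated gains as $\gamma_L N\,\Disc^{j\to k}$ using the balanced assumption, split the gain error into a hidden-weight coupling term and a spike-concentration term, and conclude via the softmax Lipschitz property together with the two-case analysis of the argmax set (Proposition~\ref{prop prelim EWA} in the paper). One minor correction: conditionally on the frozen weights, the spikes $X^{j,L+1}_{m,t}$ at distinct times $t$ are in fact independent in this model---each is a function of a disjoint ``diagonal'' of input spikes and thinning uniforms, since neurons have one-step memory and no self-interaction---so the paper applies plain Hoeffding (Proposition~\ref{prop prelim hoef}); your Azuma/Bernstein substitute, motivated by a temporal dependence that is not actually present, is unnecessary but harmless and yields the same rates.
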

Note that the bound given by Theorem \ref{theorem lim hid} on the error term $E_{\text{hid}}(M, T, \alpha)$ \eqref{Ehid} is independent of the size of the last hidden layer $\abs{\J_L}$ and the number of presented objects for the training phase of the output layer $N$. The overall error is threefold: the first two parts come from the approximation error between hidden and output layer weights and their limit, and the third part comes from the use of the expert aggregation EWA for the output layer. When taking only into account the dependency on $M$, $N$ and $T$, the error is in \[O\Big(\Big(\frac{NM^{L-1}}{T}\Big)^{1/2} + N^{1/2} e^{-CM^{1/2}} + e^{-DN^{1/2}}\Big)\] where $C$ and $D$ are constants independent of $M$, $N$ and $T$: it is small when $N\gg 1$, $M\gg \ln(N)^2$, $T \gg N M^{L-1}$ and increases with the number of selected neurons.

\begin{corollary}
    \label{coro lim output}
   Suppose we are in the CHANI EWA framework and Assumption \ref{assump EWA out} holds. When $T$, $M$ and $N$ tend to $\infty$ with $T /(NM^{L-1})$ and $e^{CM^{1/2}}N^{-1/2}$, where $C$ is a constant independent of $M,N$ and $T$, tending to infinity as well, then almost surely, the conclusions of Corollary \ref{coro ideal hid} hold and for all $k\in K$, 
   \[w_{N+1}^k \to \frac{1}{\abs{\J_L^k}}\mathbb{1}_{\J_L^k}.\]
\end{corollary}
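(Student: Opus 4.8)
The statement is the almost-sure counterpart of the quantitative, high-probability bound of Theorem~\ref{theorem lim out}, so the plan is to upgrade that bound to almost-sure convergence by a Borel--Cantelli argument, after disposing of the hidden-layer claims by direct appeal to an earlier corollary. The conclusions of Corollary~\ref{coro ideal hid} require only $M\to\infty$ and $T/M^{L-1}\to\infty$, and both are implied by the present regime since $T/M^{L-1}\geq N\cdot T/(NM^{L-1})\to\infty$. Hence the hidden-layer part ($\hat J_l\to\J_l$ and $w^j_{M+1}\to\frac12\mathbb{1}_{\{j_{S_1},j_{S_2}\}}$ almost surely, with ideal limit layers) is inherited for free, and it remains only to prove the output-weight convergence $w^k_{N+1}\to \frac{1}{|\J_L^k|}\mathbb{1}_{\J_L^k}$ for every $k\in K$.

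I would work on a single probability space carrying all the Bernoulli variables used to build the network for every triple $(M,N,T)$, and fix an arbitrary sequence $(M_n,N_n,T_n)$ satisfying the growth hypotheses. For each $n$ I apply Theorem~\ref{theorem lim out} with a summable confidence level $\alpha_n$ (for instance $\alpha_n=n^{-2}$), and let $B_n$ denote the failure of its conclusion. Then $\mathbb{P}(B_n)\leq\alpha_n$ and $\sum_n\mathbb{P}(B_n)<\infty$, so by Borel--Cantelli only finitely many $B_n$ occur almost surely. Note that the good event of Theorem~\ref{theorem lim out} already contains the conclusions of Theorem~\ref{theorem lim hid}, in particular $\hat J_L=\J_L$, so the target set is indeed the realized last layer. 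Consequently, almost surely for all large $n$ and all $k$, the bound holds in whichever of its two cases applies, giving $\big\|w^k_{N_n+1}-\tfrac{1}{|\J_L^k|}\mathbb{1}_{\J_L^k}\big\|_2 \leq R_n$, where $R_n$ is the right-hand side of that theorem evaluated at $\alpha=\alpha_n$.

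It then suffices to check $R_n\to 0$. Each building block vanishes under the regime: $E_{\text{out}}(T_n,\alpha_n)\to 0$ because $T_n\to\infty$ dominates the slowly growing $\ln(1/\alpha_n)$; the prefactor $\sqrt{|\J_L|\ln|\J_L|\,N_n}\,E_{\text{hid}}(M_n,T_n,\alpha_n)$ is, by Theorem~\ref{theorem lim hid}, of order $(\ln(1/\alpha_n))^{(L-1)/2}\big((N_nM_n^{L-1}/T_n)^{1/2}+N_n^{1/2}e^{-C M_n^{1/2}}\big)$ (the constant $C$ being the exponential rate furnished by Theorem~\ref{theorem lim hid}, which is what the corollary's hypothesis refers to), and this tends to $0$ because $T_n/(N_nM_n^{L-1})\to\infty$ and $e^{C M_n^{1/2}}N_n^{-1/2}\to\infty$; finally, in the case $\J_L^k\varsubsetneq\J_L$ the extra term $e^{-c_4\Delta^k 2^{-2^L}\sqrt{\ln(|\J_L|)N_n}}\to 0$ since $\Delta^k>0$ by definition of $\J_L^k$ as the strict argmax set. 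A union over the finitely many $k\in K$ then yields the claim.

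The main obstacle is precisely this last verification: because almost-sure convergence forces $\alpha_n\to 0$, the error inherits the logarithmic inflation $(\ln(1/\alpha_n))^{(L-1)/2}$ that is absent from the fixed-$\alpha$ big-$O$ rate quoted after Theorem~\ref{theorem lim out}. One must confirm that the divergence of $T_n/(N_nM_n^{L-1})$ and of $e^{C M_n^{1/2}}N_n^{-1/2}$ outpaces these poly-logarithmic factors, which is where a choice of $\alpha_n$ that is summable yet decays slowly relative to the speed of the regime must be balanced. Everything else is routine, the heavy concentration and weight-trajectory bookkeeping having already been carried out inside Theorems~\ref{theorem lim hid} and~\ref{theorem lim out}.
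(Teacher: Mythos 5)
Your proposal follows, at its core, the same route as the paper: the paper's entire proof of this corollary is a single sentence stating that, in the given regime, the error terms of Theorem~\ref{theorem lim out} tend to zero. Your verification that each building block vanishes --- the term $\sqrt{\abs{\J_L}\ln(\abs{\J_L})N}\,E_{\text{hid}}(M,T,\alpha)$ via $T/(NM^{L-1})\to\infty$ and $e^{CM^{1/2}}N^{-1/2}\to\infty$, the term $E_{\text{out}}(T,\alpha)$ via $T\to\infty$, and the term $e^{-c_4\Delta^k 2^{-2^L}\sqrt{\ln(\abs{\J_L})N}}$ via $\Delta^k>0$ (correct, since $\J_L^k$ is the strict argmax set when $\J_L^k\varsubsetneq\J_L$) and $N\to\infty$ --- is exactly the computation the paper leaves implicit, and your reduction of the hidden-layer claims to Corollary~\ref{coro ideal hid} via $T/M^{L-1}=N\cdot T/(NM^{L-1})\to\infty$ matches the structure of the statement.

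Where you genuinely depart from the paper is that you take the phrase ``almost surely'' seriously and attempt the high-probability-to-almost-sure upgrade via Borel--Cantelli with summable $\alpha_n$; the paper does not do this. With $\alpha$ held fixed, the paper's one-line argument yields only convergence in probability (for every $\epsilon>0$, eventually $\mathbb{P}(\|w^k_{N+1}-\bar{w}^k\|_2>\epsilon)\le\alpha$, and $\alpha$ is arbitrary). The obstacle you flag at the end is therefore genuine, and it is not resolved by the paper either: summability forces $\ln(1/\alpha_n)\gtrsim\ln n$, whereas the parameter sequences $(M_n,N_n,T_n)$ are only required to diverge in the stated regime and may do so arbitrarily slowly in $n$ (say, like iterated logarithms), in which case $(\ln(1/\alpha_n))^{(L-1)/2}$ swamps $(T_n/(N_nM_n^{L-1}))^{-1/2}$-type gains and $\sqrt{\ln(1/\alpha_n)/T_n}$ need not vanish. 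Your earlier assertion that ``$T_n\to\infty$ dominates the slowly growing $\ln(1/\alpha_n)$'' is precisely the step that cannot be justified in this generality, so your proof is incomplete there; moreover, since the paper never constructs a common probability space coupling the runs for different parameter triples, there is no hidden monotonicity to invoke. In short, your attempt faithfully exposes an imprecision in the paper itself: either the corollary should be read as convergence in probability, or additional growth conditions on $(M_n,N_n,T_n)$ (enough to dominate the $\ln n$ inflation from a summable $\alpha_n$) are needed for the almost-sure claim to follow by your Borel--Cantelli argument.
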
 

This corollary states that the weights of an output neuron $k$ converge to the family which uniformly 
distributes weights on presynaptic neurons with maximal feature discrepancy. Therefore, at the limit, neuron $k$ will be connected only to neurons coding for correlations which are sensible to class $k$. 

Theorem \ref{theorem lim out} and Corollary \ref{coro lim output} can be related to Theorem $3.4$ of \cite{jaffard:hal-04065229}, where we computed HAN limit weights with no hidden layers. Here, we generalize this asymptotic analysis to CHANI. Besides, thanks to
the addition of hidden layers, CHANI's limit output weight family is more interesting than HAN's from a learning point of view since it decomposes classes in terms of correlations of $2^L$ features (against combinations
  of simple features). In a general case, we cannot
say if this limit family is a strong feasible weight family, or even if it enables the network to correctly classify objects when noise is allowed.
However, we can conclude in a more precise framework defined below where classes are defined by feature correlations.

\subsubsection{When classes are defined by feature correlations} \label{sec classes feat corr}
First, let us define some assumptions that we will need to perform our study.

\begin{assumption}[Binary correlations] \label{assump hom}
    There exists $p\in (0,1]$ such that for all $j_S\in \J_L$, for all $o\in \Obj$, $\rho_o(S) \in \{0,p\}$.
    If $\rho_o(S) = p$ then we say that object $o$ has features $S$. Besides, for $j_S\in \J_L$, let $\Obj^S:=\{o\in \Obj, \rho_o(S) = p\}$
    the set of objects with features $S$. Then all the $\Obj^S$ have the same cardinality.
\end{assumption}
 The first part of this assumption is verified if for instance the input neurons spike independently of each other and have a fixed spiking probability when they are activated. The restriction on the cardinality of the sets $\Obj^S$ is a technical assumption that we make in order to facilitate the computations.

\begin{assumption}[Class decomposition] \label{assump decomp}
 For every $k\in K$, there exists a set $E^k\subset I_L$ such that for every $S\in E^k$, $j_S\in \J_L$ and $k = \bigcup_{S\in E^k} \Obj^S$.
 \end{assumption}
This assumption means that the classes in which the objects are classified are defined by feature correlations (encoded in the set $\J_L$). To be more precise, an object $o$ belongs to class $k$ if it possesses all the features of a given subset of size $2^L$, or all the features of another such subset, or yet another, and so on. Thus, the complexity of these correlations increases with $L$: as the network's depth grows, a set $\Obj^S$ captures more intricate patterns within the objects.

 \begin{theorem} \label{theo specific case}
    Suppose Assumption \ref{assump hom} (binary correlations) holds. Then the following statements are equivalent.

    \noindent $\bullet$ Assumption \ref{assump decomp} (class decomposition) is verified.
    
    \noindent $\bullet$ There exists a strong feasible weight family w.r.t. the set $\J_L$.
    
    \noindent $\bullet$ The limit output weights family defined in Corollaries \ref{coro ideal hid} and \ref{coro lim output} is a strong feasible weight family.

    \noindent In particular, when Assumption \ref{assump decomp} (class decomposition) is verified, the network with limit weights is ideal. 
 \end{theorem}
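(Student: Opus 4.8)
The plan is to establish the three-way equivalence by a cyclic chain of implications, $(2)\Rightarrow(1)\Rightarrow(3)\Rightarrow(2)$, since the last implication is immediate: statement $(3)$ exhibits a concrete strong feasible family, so in particular one exists. The unifying observation is that, under Assumption \ref{assump hom}, strong feasibility is a purely combinatorial covering condition. Indeed, for weights $q^k\in\mathcal{P}_{\J_L}$ one has $\bar{p}^k_o(q^k,\J_L)=p\sum_{S:\,o\in\Obj^S}q^{j_S\to k}$, and since $p>0$ the sign of $\bar{p}^k_o$ is governed only by whether the support $E^k:=\{S:\,q^{j_S\to k}>0\}$ contains some $S$ with $o\in\Obj^S$. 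Hence $q^K$ is strong feasible w.r.t. $\J_L$ if and only if, for every $k$, the family $\{\Obj^S\}_{S\in E^k}$ covers $k$ while no $\Obj^S$ with $S\in E^k$ meets the complement of $k$; equivalently $k=\bigcup_{S\in E^k}\Obj^S$ with each $\Obj^S\subseteq k$ and $j_S\in\J_L$. I would record this reformulation once and use it throughout.

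With this reformulation, $(2)\Rightarrow(1)$ is just a reading of the definition: given a strong feasible $q^K$, the supports $E^k$ satisfy exactly the conditions of Assumption \ref{assump decomp}. (As a remark, the converse also becomes transparent, since the uniform weights on $E^k$ already form a strong feasible family, giving a direct $(1)\Rightarrow(2)$ should one prefer it.) The genuine content is $(1)\Rightarrow(3)$: I must show that the \emph{particular} limit family $\bar{w}^{j\to k}=\abs{\J_L^k}^{-1}\mathbb{1}_{j\in\J_L^k}$, supported on the feature-discrepancy maximizers $\J_L^k=\arg\max_{j\in\J_L}\Disc^{j\to k}$, is itself strong feasible, not merely that some strong feasible family exists.

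To do this I would first compute the feature discrepancy explicitly. Using $\rho_o(S)\in\{0,p\}$ and that the classes partition $\Obj$, one gets for $j_S\in\J_L$
\[
\Disc^{j_S\to k}=\rho^k(S)-\brac{\rho^{k'}(S)}_{k'\neq k}
= p\,\frac{\abs{k\cap\Obj^S}}{\abs{k}}-\frac{p}{\abs{K}-1}\sum_{k'\neq k}\frac{\abs{k'\cap\Obj^S}}{\abs{k'}}.
\]
The key step is a case split on whether $\Obj^S\subseteq k$. If $\Obj^S\subseteq k$, class disjointness kills the subtracted sum and, because all $\Obj^S$ share the same cardinality $c$ (the quantitative part of Assumption \ref{assump hom}), the discrepancy equals the constant $pc/\abs{k}$, independent of $S$. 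If $\Obj^S\not\subseteq k$, then $\abs{k\cap\Obj^S}<c$, so the first term is $<pc/\abs{k}$ while the subtracted sum is $\geq 0$; hence $\Disc^{j_S\to k}<pc/\abs{k}$ strictly. This identifies the maximizer set exactly as $\J_L^k=\{j_S\in\J_L:\Obj^S\subseteq k\}$, which is nonempty because Assumption \ref{assump decomp} supplies $E^k\subseteq\J_L^k$ with $\bigcup_{S\in E^k}\Obj^S=k$. Feeding $\J_L^k$ into the covering reformulation then yields strong feasibility of $\bar{w}$: for $o\in k$ some $S\in E^k\subseteq\J_L^k$ covers $o$, so $\bar{p}^k_o>0$; and for $o\notin k$ every $S\in\J_L^k$ has $\Obj^S\subseteq k$, so $o\notin\Obj^S$ and $\bar{p}^k_o=0$. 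I expect this discrepancy computation — pinning the argmax down to precisely $\{S:\Obj^S\subseteq k\}$, where both the same-cardinality hypothesis and class disjointness are indispensable — to be the main obstacle; the remaining equivalences are otherwise formal.

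Finally, for the ``in particular'' clause I would invoke Corollary \ref{coro ideal hid}, by which the limit hidden layer $\J_L$ is ideal with constant $\gamma_L=2^{-2^L+1}>0$, together with the linking remark that for an ideal $\J_L$ connected by strong feasible output weights one has $p^k_o(\bar{w}^k)=\gamma_L\,\bar{p}^k_o(\bar{w}^k,\J_L)$. Since $\gamma_L>0$, strong feasibility of $\bar{w}$ gives $p^k_o(\bar{w}^k)>0\iff o\in k$, so every output neuron — and hence, together with the ideal hidden neurons, the whole limit network — is ideal.
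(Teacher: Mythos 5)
Your proposal is correct and follows essentially the same route as the paper: the same cyclic chain of implications (class decomposition $\Rightarrow$ limit family strong feasible $\Rightarrow$ existence $\Rightarrow$ class decomposition, with the middle step trivial), the same explicit feature-discrepancy computation exploiting binary correlations and the equal cardinality of the sets $\Obj^S$ to identify $\J_L^k$ as $\{j_S : \Obj^S \subseteq k\}$ (the paper phrases this as $\J_L^k = E^k$ for a maximal decomposition set, which is the same set), and the same support/covering argument for the reverse implication. The only cosmetic difference is that you isolate the covering reformulation of strong feasibility as a standing lemma, whereas the paper carries out the equivalent computations inline.
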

 
 This final theorem provides the last ingredients needed to establish the main result of our article, which is Theorem \ref{theo principal}. It establishes that if the classes can be characterized by correlations among $2^L$ features encoded in the input layer, then CHANI is capable of successfully learning to classify them. Furthermore, as long as there exists an output weight family that enables correct classification when the hidden layers encode correlations, CHANI’s output layer will converge to one of these optimal weight families. This learning guarantee is analogous to the Perceptron Learning Theorem \citep{rosenblatt1962principles}, which states that if a set of weights exists that can classify the data—equivalent to assuming linear separability—then the perceptron learning algorithm will find such weights. Additionally, this result can be linked to the Spiking Neuron Convergence Conjecture that we described in Intuition $\&$ Sketch \ref{intuit SNCC}: we proved this conjecture for CHANI within a specific framework.

In this framework, we can also compute the Vapnik-Chervonenkis dimension of our network. For a given classification algorithm, its VC-dimension corresponds to the size of the largest set of points that it can shatter. It measures the complexity of the class of functions that can be learned. In the literature, this dimension appears in generalization error bounds.

We assimilate the objects to be classified as sequences $o=(o_i)_{i\in I}$ where $o_i=1$ if $o$ has feature $i$ and $o_i = 0$ otherwise: they live in a space of dimension $\abs{I}$. Neuron $j_S\in \J_L$ is activated when $o$ has all the features of the set $S$.
We consider a binary output: let the hypothesis set $H:=\{\mathbb{1}_{F} \text{ such that } F\subset J_L\}$, where for an object $o$, $\mathbb{1}_{F}(o) = 1$ if and only if there exists $j\in F$ activated by $o$, and $0$ otherwise. It corresponds to the functions that CHANI can learn. Then we have the following result.

\begin{proposition} \label{prop vcdim}
The VC-dimension of the hypothesis set $H$ is the size of the last hidden layer
\[\operatorname{VCdim}(H) = \abs{\J_L}.\]
\end{proposition}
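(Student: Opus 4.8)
The plan is to treat each hypothesis $\mathbb{1}_F$ as a monotone Boolean function of the input bits and to bound the VC-dimension from both sides. First I would rewrite the class in a transparent form: for an object $o=(o_i)_{i\in I}$, the neuron $j_S\in\J_L$ fires exactly when $o$ has all the features of $S$, i.e. when $\prod_{i\in S}o_i=1$, so that $\mathbb{1}_F(o)=\bigvee_{j_S\in F}\prod_{i\in S}o_i$. In this form $H$ is the family of all monotone disjunctions of the $|\J_L|$ conjunctions carried by the neurons of the last hidden layer, $F$ ranging over subsets of $\J_L$.

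For the upper bound I would invoke the elementary counting inequality $\operatorname{VCdim}(H)\le\log_2|H|$: since $F$ ranges over subsets of $\J_L$, there are at most $2^{|\J_L|}$ distinct functions in $H$, whence $\operatorname{VCdim}(H)\le|\J_L|$. This direction is immediate and uses no structure beyond the indexing of $H$ by subsets of $\J_L$.

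The substance is the matching lower bound, for which I would exhibit an explicit shattered set of exactly $|\J_L|$ objects. Enumerate $\J_L=\{j_{S_1},\dots,j_{S_n}\}$ with $n=|\J_L|$, and for each $r$ let $o^{(r)}$ be the object whose feature set is exactly $S_r$ (that is, $o^{(r)}_i=1\iff i\in S_r$). The decisive observation is that every neuron of $\J_L$ codes a subset of cardinality exactly $2^L$, by the recursive construction of the layers; hence a neuron $j_S\in\J_L$ fires on $o^{(r)}$ only if $S\subseteq S_r$ with $|S|=|S_r|=2^L$, which forces $S=S_r$. Consequently $o^{(r)}$ activates $j_{S_r}$ and no other neuron of $\J_L$, so the vector of activations of $o^{(r)}$ is the $r$-th canonical basis vector of $\{0,1\}^{|\J_L|}$. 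Given any target subset $B\subseteq\{o^{(1)},\dots,o^{(n)}\}$, the choice $F=\{j_{S_r}:o^{(r)}\in B\}$ yields $\mathbb{1}_F(o^{(r)})=1$ iff $o^{(r)}\in B$ (including $F=\emptyset$ for the all-negative labelling), so these $n$ objects are shattered and $\operatorname{VCdim}(H)\ge|\J_L|$. Combining the two bounds gives the claimed equality.

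The step I expect to be the crux is verifying that no neuron other than $j_{S_r}$ is triggered by $o^{(r)}$: this is precisely where the equal-cardinality property of layer-$L$ neurons is essential, since it rules out any neuron with a strictly smaller support being spuriously fired by the object $o^{(r)}$, and the injectivity of $S\mapsto j_S$ guarantees that $j_{S_r}$ is the unique neuron with support $S_r$. Once the activation vectors are seen to be the standard basis, the remainder is pure bookkeeping.
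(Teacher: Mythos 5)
Your proof is correct, and its substantive half coincides with the paper's. The lower bound is exactly the paper's argument: for each $j_{S_r}\in\J_L$ you build the object whose feature set is exactly $S_r$, and you use the fact that every neuron of the last layer codes a set of the same cardinality $2^L$ to conclude that this object activates $j_{S_r}$ and no other neuron of $\J_L$; the activation vectors are then the standard basis vectors of $\{0,1\}^{\abs{\J_L}}$, and the resulting $\abs{\J_L}$ objects are shattered by the indicators $\mathbb{1}_F$. Where you diverge is the upper bound. The paper argues combinatorially on an arbitrary shattered set $E$: for each $o\in E$, shattering yields a nonempty $F_o\subset\J_L$ such that $o$ is the unique element of $E$ activating a neuron of $F_o$; picking such a neuron $j_o\in F_o$ gives an injection $o\mapsto j_o$ from $E$ into $\J_L$, whence $\abs{E}\leq\abs{\J_L}$. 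You instead invoke the finite-class bound $\operatorname{VCdim}(H)\leq\log_2\abs{H}\leq\log_2 2^{\abs{\J_L}}=\abs{\J_L}$, which is valid since $H$ is indexed by the subsets $F\subset\J_L$ (collisions between distinct $F$'s inducing the same function only decrease $\abs{H}$). Your route is more standard and slightly shorter; the paper's injection argument is marginally more informative in that it exhibits, inside any shattered set, a distinct witness neuron per object, but both arguments deliver the same bound with comparable effort, so the two proofs differ only in this cosmetic choice for the easy direction.
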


Unlike multilayer perceptrons, whose VC-dimension
scales as $O(W\log W)$ (where W denotes the total number of weights) \citep{Shalev-Shwartz_Ben-David_2014}, CHANI’s VC-
dimension depends solely on the complexity of the last hidden layer. This arises because
each hidden neuron autonomously learns correlations in an almost unsupervised learning
manner, as its learning rule does not account for the true class of the presented object.
Besides, the complexity of one hidden layer is not added to the next, but rather included
in it. However, as the number of selected neurons increases, the errors in the results regarding the asymptotic behavior of CHANI also increase. Therefore, it is essential to strike a balance between accurately representing complex classes and learning efficiently.

\subsection{Discussion about the choice of bias.} \label{sec discuss nu}

Assumption \ref{assump nu} ($1/2$ bias) could be relaxed to any $\nu\in [1/2, 1)$: Theorem \ref{theorem lim hid} and Corollary \ref{coro ideal hid} would still hold. Indeed, their key ingredient is that hidden neurons spiking probabilities converge to ideal spiking probabilities. The conditional spiking probability of hidden neuron $j= j_{S_1\cup S_2}$ with bias $\nu$ and fixed weights $\mathbb{1}_{\{j_{S_1},j_{S_2}\}}$ where $S_1$ and $S_2$ are feature subsets encoded in the previous layer (which are the limit weights given by Corollary \ref{coro ideal hid}) at time step $t$ of the presentation of an object $o$ is
$\Big(-\nu + \frac{1}{2}(X^{j_{S_1}}_{o,t-1} + X^{j_{S_2}}_{o,t-1})\Big)_+$, where $X^{j_{S_1}}_{o,t-1}$ and $X^{j_{S_2}}_{o,t-1}$ are the activities of neurons $j_{S_1}$ and $j_{S_2}$ at the previous time step.
For $\nu \in [1/2, 1)$, neuron $j$ is active if and only if neurons $j_{S_1}$ and $j_{S_2}$ spike, and in this case its conditional spiking probability is $1-\nu$. Therefore, the limit spiking probabilities would still be ideal. However, there is a decrease in firing rate from one hidden layer to the next and the choice $1/2$ guarantees the lowest decrease. This decrease appears in the constant $\gamma_L$ of Corollary \ref{coro average}: in fact, $\gamma_L = (1-\nu)^{2^L-1}$ and a large $\gamma_L$ guarantees a large class discrepancy for our network. Therefore, the choice $\nu = 1/2$ is optimal.
\newline

For $\nu < 1/2$, neuron $j$ is active even when only one of the two neurons $j_{S_1}$ and $j_{S_2}$ spikes, 
so this choice of bias does not provide ideal limit spiking probabilities and the extra activity of hidden neurons can be identified as noise. 

Let us study a very simple example in the framework of section \ref{sec classes feat corr} where for any $\nu <1/2$, the limit weights do not enable to correctly classify the objects.

We consider the set of object natures $\Obj =\{{\color{blue}\bigcirc},{\color{blue}\Box}, {\color{blue} \triangle}, {\color{red} \bigcirc}, {\color{red} \Box}, {\color{red} \triangle}, {\color{green} \bigcirc}, {\color{green} \Box}, {\color{green} \triangle}\}$.
The set of features and input neurons is $I = \{\text{circle, square, triangle, blue,  red,  green}\}$.
We assume that input neurons $i$ spike independently of each other with probability $p'$ if and only if the presented object has feature $i$, and we choose depth $L=1$. Then any threshold $s_1\in (0,{p'}^2)$ verifies Assumption \ref{assump half sets} and in this case
\begin{align*}
    \J_1 = \{ j_{\{\text{blue}, \text{square}\}}, &j_{\{\text{red}, \text{square}\}}, j_{\{\text{green}, \text{square}\}}, j_{\{\text{blue}, \text{circle}\}}, j_{\{\text{red}, \text{circle}\}}, \\
    & j_{\{\text{green}, \text{circle}\}}, j_{\{\text{blue}, \text{triangle}\}}, j_{\{\text{red}, \text{triangle}\}}, j_{\{\text{green}, \text{triangle}\}} \}.
\end{align*}
This set can be identified with the set of object natures $\Obj$, and Assumption \ref{assump hom} (binary correlations) is verified with $p = p'$. Therefore, any set of classes $K$ verify Assumption \ref{assump decomp} (class decomposition) since any class can be written as the union of its single elements. Let us choose $K=\{k_1, k_2\}$ with 
$k_1=\{{\color{red}\Large \Box },{\color{green}\Large \Box },{\color{blue} \bigcirc },{\color{blue}\triangle }\}$ and $k_2=\{{\color{blue}\Large \Box },{\color{red} \bigcirc },{\color{green} \bigcirc },{\color{red}\triangle },{\color{green}\triangle }\}$.
This classes are such that class $k_1$ contains every item which is squared or blue except the blue square, and class $k_2$ contains every other possible object.

\begin{proposition} \label{prop nu}
    In this framework, for any $\nu \in [0, 1/2)$, the limit weights do not enable to correctly classify the ${\color{blue}\Large \Box}$.
\end{proposition}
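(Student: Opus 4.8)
The plan is to run the full CHANI EWA pipeline with a fixed bias $\nu\in[0,1/2)$, identify the limiting weights of both layers explicitly, and then compare the two limiting output firing rates on the blue square. Writing $b,r,g$ for the colors and $\bigcirc,\Box,\triangle$ for the shapes, the hidden layer is $\J_1$, the nine color–shape neurons. Since $L=1$, the hidden gains \eqref{gain hidd} are empirical correlations of input neurons only and do not involve $\nu$, so the hidden weights still converge to $\frac12\mathbb{1}_{\{j_{S_1},j_{S_2}\}}$ as in Corollary \ref{coro ideal hid}. With these weights and $\nu<1/2$, a direct computation of the limiting mean activity $p^{j_{\{c,s\}}}_o$ gives three regimes: $P_b:=(1-\nu){p'}^2+(\tfrac12-\nu)\,2p'(1-p')$ when the object carries both features $c,s$; $P_o:=(\tfrac12-\nu)p'$ when it carries exactly one of them; and $0$ otherwise. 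The crucial structural fact is that $P_b>P_o>0$ precisely when $\nu<1/2$, where $P_o$ is the single-feature ``leakage'' that acts as noise; indeed $P_b-P_o=\tfrac12{p'}^2+(\tfrac12-\nu)p'(1-p')>0$.

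Next I would pin down the limit output weights. Repeating the EWA concentration argument behind Theorem \ref{theorem lim out}, but with the noisy activities above replacing the ideal ones, the cumulative gain per round of a presynaptic neuron $j_S$ for output neuron $k$ converges to the \emph{activity discrepancy} $D^{j_S\to k}:=\brac{p^{j_S}_o}_{o\in k}-\brac{\brac{p^{j_S}_o}_{o\in k'}}_{k'\neq k}$, and the weights $w^k_{N+1}$ converge to the uniform distribution over $\arg\max_j D^{j\to k}$. As $\abs{K}=2$ one has $D^{j\to k_2}=-D^{j\to k_1}$, so it suffices to compute the nine numbers $D^{j\to k_1}$. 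Using the activity table they take only three values: $A:=\tfrac{P_b}{4}-\tfrac{7P_o}{20}$ on the four neurons of $\J_1^{k_1}=\{j_{\{r,\Box\}},j_{\{g,\Box\}},j_{\{b,\bigcirc\}},j_{\{b,\triangle\}}\}$; $B:=P_o-\tfrac{P_b}{5}$ on the blue-square neuron $j_{\{b,\Box\}}$; and $C:=\tfrac{P_o}{10}-\tfrac{P_b}{5}$ on the remaining four $\{j_{\{r,\bigcirc\}},j_{\{g,\bigcirc\}},j_{\{r,\triangle\}},j_{\{g,\triangle\}}\}$.

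The crux is then the comparison of $A,B,C$, and it reduces entirely to $P_b>P_o>0$: one checks $A-C=\tfrac{9}{20}(P_b-P_o)>0$ and $B-C=\tfrac{9}{10}P_o>0$. Hence for $k_1$ no $C$-neuron maximizes $D^{\cdot\to k_1}$, so $\arg\max_j D^{j\to k_1}\subseteq\{j_{\{r,\Box\}},j_{\{g,\Box\}},j_{\{b,\bigcirc\}},j_{\{b,\triangle\}},j_{\{b,\Box\}}\}$, which is exactly the set of neurons sharing a feature with the blue square, all of which have strictly positive activity on it ($P_o$ or $P_b$). For $k_2$, $\arg\max_j D^{j\to k_2}=\arg\min_j D^{j\to k_1}$ equals the four $C$-neurons $\{j_{\{r,\bigcirc\}},j_{\{g,\bigcirc\}},j_{\{r,\triangle\}},j_{\{g,\triangle\}}\}$, each of which shares neither color nor shape with the blue square and so has zero activity on it. Evaluating the limiting mean activities on the blue square $o^\ast$ via \eqref{formula proba output} then gives $\bar w^{k_2}\cdot(p^{j}_{o^\ast})_{j\in\J_1}=0$ while $\bar w^{k_1}\cdot(p^{j}_{o^\ast})_{j\in\J_1}>0$, so the network assigns $o^\ast$ to $k_1\neq k_2$: the blue square is misclassified, for every $\nu\in[0,1/2)$.

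The main obstacle — and what makes the statement more than a continuity triviality — is that the relevant limit weights are genuinely the noisy ones, not the ideal family $\tfrac{1}{\abs{\J_1^k}}\mathbb{1}_{\J_1^k}$ of Corollary \ref{coro lim output}: with the ideal weights one would instead get activities $P_b/5$ for $k_2$ and $P_o$ for $k_1$, which \emph{would} classify the blue square correctly for small noise. The true effect is a discontinuity at $\nu=1/2$. There the single-feature leakage vanishes ($P_o=0$), the blue-square neuron $j_{\{b,\Box\}}$ ties with the four $C$-neurons for $k_2$ (all discrepancies equal $P_b/5$), and it therefore belongs to $\J_1^{k_2}$; but for any $\nu<1/2$ the tie breaks strictly in favor of the $C$-neurons by the margin $B-C=\tfrac{9}{10}P_o$, ejecting from $k_2$'s support the \emph{only} neuron able to register the blue square. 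The care required is to verify that this ejection, together with the symmetric inclusion for $k_1$, holds uniformly over all $\nu\in[0,1/2)$ and all $p'$ — which is exactly what the two strict inequalities $A>C$ and $B>C$ guarantee.
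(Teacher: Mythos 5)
Your proof is correct and follows essentially the same route as the paper: compute the $\nu$-dependent limit hidden activities ($P_b$ and $P_o$, the paper's $p$ and $q$), identify the limit output weights as uniform over the argmax of the activity discrepancies (your $A$, $B$, $C$ are exactly the paper's three values $\tfrac{7}{20}q-\tfrac14 p$, $\tfrac15 p - q$, $\tfrac15 p - \tfrac{1}{10}q$ up to the sign flip $D^{j\to k_1}=-D^{j\to k_2}$), and conclude that $k_2$'s limit support is the four neurons sharing no feature with the blue square, hence zero activity on it. Your write-up is in fact slightly more complete than the paper's, which stops at ``the limit activity of $k_2$ is zero, so the object is not well classified'': you additionally verify—without having to decide whether $A$ or $B$ is larger, which depends on $\nu$ and $p'$—that $k_1$'s limit support contains only neurons with strictly positive activity on the blue square, thereby ruling out a degenerate tie at zero and making the misclassification conclusion airtight for every $\nu\in[0,1/2)$.
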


Therefore, even with $\nu$ less but close to $1/2$, the limit network fails to correctly classify the objects, whereas for any $\nu \geq 1/2$ the results of section \ref{sec classes feat corr} hold and the limit network succeeds to accomplish the task. This proposition enlightens that even a small noise can stop the network from working properly in a very simple case. The choice of bias allows us to control this phenomenon.

\section{Additional results in a more general framework} \label{sec reg}

In this section, we examine the guarantees we can obtain about CHANI's behavior in a much more general setting than the CHANI EWA framework: the main assumption needed in this section is that the expert aggregation algorithm admits a regret bound. As each hidden and output neuron runs its own expert aggregation algorithm, it possesses its own regret. Through our choice of gains, we reinterpret these regrets, enabling us to gain meaningful insights into the network's learning.

\subsection{Hidden layers}

\begin{assumption}[Hidden neurons regret bound] \label{assumption regret bound hid}
There exists $C>0$ such that for every $l\in [L]$, the expert aggregation algorithm $f^l$ used to train the hidden layer of depth $l$ is such that for any set of experts $E$ and any deterministic sequence of gains $g^E_{[M]}:=(g^e_m)_{ e\in E, m \in [M]}$ taking value in $[a,b]$ with $a<b$,
\[R_M \leq C(b-a) \sqrt{\ln(\abs{E})M}.\]
\end{assumption}
The interpretation of the regret of hidden neurons leads to the following result.

\begin{proposition}[Hidden neurons regret bound] \label{prop reg hid}
Suppose Assumption \ref{assumption regret bound hid} (hidden neurons regret bound) holds. Then a.s. for every
 $l\in [L]$ and $j=j_{S_1\cup S_2}\in J_l$ neuron of the $l^{\text{th}}$ layer where $S_1$ and $S_2$ are feature subsets encoded in the previous layer,
\[
\max_{q \in \mathcal{P}_{\hat{J}_{l-1}}} 
\Brac{(\psi^{j,l}_{m,t}(q) - \psi^{j,l}_{m,t}(w^j_m))X^{j_{S_1},l}_{m,t}X^{j_{S_2},l}_{m,t}}_{\begin{subarray}{l}
   t \in [T] \\ m \in [M]
\end{subarray}} \leq  C \sqrt{\frac{\ln(\abs{\hat{J}_{l-1}})}{M}}.
\]
\end{proposition}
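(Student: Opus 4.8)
The plan is to recognize the left-hand side as the averaged (per-round) regret $R_M/M$ of the expert aggregation algorithm $f^l$ that neuron $j$ runs over the experts $\hat{J}_{l-1}$, and then to invoke Assumption~\ref{assumption regret bound hid} with those experts. No probabilistic machinery is needed: the argument is an algebraic identification of the averaged quantity with the regret, followed by a single, pathwise application of the regret bound.

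First I would use that the bias $\nu$ is common to $\psi^{j,l}_{m,t}(q)$ and $\psi^{j,l}_{m,t}(w^j_m)$, so it cancels and
\[
\psi^{j,l}_{m,t}(q) - \psi^{j,l}_{m,t}(w^j_m) = (q - w^j_m)\cdot X^{\hat{J}_{l-1},l}_{m,t-1} = \sum_{j'\in\hat{J}_{l-1}}(q^{j'} - w^{j'\to j}_m)\,X^{j',l}_{m,t-1}.
\]
Multiplying by the parent-pair activity $X^{j_{S_1},l}_{m,t}X^{j_{S_2},l}_{m,t}$ and averaging over $t$, each coordinate $j'$ yields (after matching the time index with the Hawkes convention) the empirical triple correlation $\brac{X^{j_{S_1},l}_{m,t}X^{j_{S_2},l}_{m,t}X^{j',l}_{m,t}}_{t\in[T]}$, which by \eqref{gain hidd} is exactly the gain $g^{j'\to j}_m$. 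Hence the averaged summand is $(q - w^j_m)\cdot g^j_m$, and after averaging over $m$ and maximizing over $q$,
\[
\max_{q\in\mathcal{P}_{\hat{J}_{l-1}}} \Brac{(\psi^{j,l}_{m,t}(q) - \psi^{j,l}_{m,t}(w^j_m))X^{j_{S_1},l}_{m,t}X^{j_{S_2},l}_{m,t}}_{t\in[T],\,m\in[M]} = \frac{1}{M}\max_{q\in\mathcal{P}_{\hat{J}_{l-1}}}\sum_{m=1}^M (q - w^j_m)\cdot g^j_m = \frac{R_M}{M},
\]
where the last equality is the definition of the forecaster's regret, using $\sum_m w^j_m\cdot g^j_m = G^j_M$.

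It then remains only to bound $R_M$. The gains $g^{j'\to j}_m = \hat{\rho}^l_m(\{j_{S_1},j_{S_2},j'\})$ are empirical correlations and so lie in $[0,1]$; thus Assumption~\ref{assumption regret bound hid} applies with $E=\hat{J}_{l-1}$ and $[a,b]=[0,1]$, giving $R_M \leq C\sqrt{\ln(\abs{\hat{J}_{l-1}})M}$. Dividing by $M$ produces the stated bound.

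I expect the main point to watch is not any hard estimate but the careful reconciliation of indices in the identification of the averaged $\psi$-difference with the regret — in particular matching the time-shifted presynaptic activity appearing in $\psi^{j,l}_{m,t}$ with the common-time triple correlation defining the gain — together with the observation that Assumption~\ref{assumption regret bound hid} is phrased for deterministic gains whereas $(g^j_m)_m$ is random. The latter is resolved by noting that, conditionally on any realization of the spikes, $(g^j_m)_m$ is a fixed sequence in $[0,1]^{\abs{\hat{J}_{l-1}}}$, so the regret bound holds for every outcome and the conclusion is therefore almost sure.
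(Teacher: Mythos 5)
Your proposal is correct and follows essentially the same route as the paper: both identify the left-hand side, via the definition \eqref{gain hidd} of the gains, as the per-round regret $R^j_M/M$ of the expert aggregation run by neuron $j$ over the experts $\hat{J}_{l-1}$, and then invoke Assumption \ref{assumption regret bound hid} with gains in $[0,1]$ (the paper adds the $\nu$-terms to the cumulated gains of $w^j_m$ and of $q$ separately and lets them cancel in the difference, which is the same cancellation you perform directly). Your closing remark on applying the deterministic-gains regret bound pathwise is a point the paper leaves implicit, and it is the right justification for the almost-sure statement.
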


This regret bound can be interpreted as follows: since $ p^{j,l}_{m,t}(w^j_m) = 
 (\psi^{j,l}_{m,t}(w^j_m))_+$, on average, neuron $j=j_{S_1\cup S_2}$ will tend to spike as frequently as possible
when neurons $j_{S_1}$ and $j_{S_2}$ are highly correlated, that is, when the product $X^{j_{S_1},l}_{m,t}X^{j_{S_2},l}_{m,t}$ is
often equal to $1$.
Although this result has less implications than Theorem \ref{theorem lim hid}, it relies only on Assumption \ref{assumption regret bound hid} (Hidden neurons regret bound). This makes it a significant step toward analyzing the role of hidden layers in CHANI within a very general setting.


\subsection{Output layer} \label{sec reg output}

\begin{assumption}[$\xi$-balanced] \label{assump proportions}
    There exists a constant $\xi>0$ independent of $N$ such that for every $k\in K$, $N_k/N\geq \xi$.
\end{assumption}
This assumption means that during the training phase of the output layer, the proportion of presented objects of any class is at least $\xi$. This ensures that the network sees a significant number of objects of each class. 

\begin{proposition} \label{prop reg class disc}
    Suppose Assumptions \ref{assump proportions} ($\xi$-balanced) and \ref{assumption regret bound out} (Ouptut neurons regret bound) hold. Then,
    \[
\Disc_N(w^k_{[N]}, X_{[N],[T]}^{\hat{J}_L,L+1}) \geq \max_{q^K\in (\mathcal{P}_{\hat{J}_{L}})^{\abs{K}}}  \Disc_N(q^K, X_{[N],[T]}^{\hat{J}_L,L+1}) - \frac{C\abs{K}}{\xi(\abs{K}-1)} \sqrt{\frac{\ln(\abs{\hat{J}_L})}{N}} \quad \text{a.s.}
\]
\end{proposition}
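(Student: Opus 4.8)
The plan is to exploit the fact that the output gains \eqref{eq gain output} are rigged so that the forecaster's cumulated gain reproduces, up to the factor $N$, the class discrepancy. First I would record the two algebraic identities that drive the whole argument. Writing $\hat{p}_m(q):=q\cdot\brac{X^{\hat{J}_L,L+1}_{m,t}}_{t\in[T]}$ for the time-averaged activity produced by a weight vector $q\in\mathcal{P}_{\hat{J}_L}$, a direct expansion of $q\cdot g^{\hat{J}_L\to k}_m$ using \eqref{eq gain output} gives
\[
\frac{1}{N}\sum_{m=1}^N q\cdot g^{\hat{J}_L\to k}_m
= \frac{1}{N^k}\sum_{m:\,o^{L+1}_m\in k}\hat{p}_m(q)
-\frac{1}{\abs{K}-1}\sum_{k'\neq k}\frac{1}{N^{k'}}\sum_{m:\,o^{L+1}_m\in k'}\hat{p}_m(q)
= \Disc_N^k(q, X_{[N],[T]}^{\hat{J}_L,L+1}),
\]
the cancellation of the normalizations $N/N^{k}$ being exactly what turns the two cases of \eqref{eq gain output} into the two class-conditional averages defining $\Disc_N^k$. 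Running the same computation with the running weights $w^k_m$ in place of a fixed $q$ shows that the forecaster's own cumulated gain satisfies $\tfrac{1}{N}G^k_N=\Disc_N^k(w^k_{[N]}, X_{[N],[T]}^{\hat{J}_L,L+1})$.

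Second, I would apply Assumption~\ref{assumption regret bound out} (output neurons regret bound) to each output neuron $k$ separately. What makes this legitimate pathwise, and hence almost surely, is that the gains $g^{j\to k}_m$ depend only on the activity of the already-frozen hidden layer $\hat{J}_L$ and on the class labels, so they are oblivious to the forecaster's choices $w^k_m$; for every realization they form a fixed deterministic sequence to which the worst-case regret bound applies. To invoke the bound I must control the range $[a,b]$ of the gains: since $\brac{X^{j,L+1}_{m,t}}_{t\in[T]}\in[0,1]$, the $\xi$-balanced Assumption~\ref{assump proportions} gives $N/N^{k''}\le 1/\xi$ for every class, so the gains lie in $\big[-\tfrac{1}{\xi(\abs{K}-1)},\tfrac{1}{\xi}\big]$ and $b-a\le \tfrac{\abs{K}}{\xi(\abs{K}-1)}$. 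Dividing the regret inequality $\max_{q}\sum_m q\cdot g^{\hat{J}_L\to k}_m-G^k_N\le C(b-a)\sqrt{\ln(\abs{\hat{J}_L})N}$ by $N$ and substituting the two identities yields, for each $k$,
\[
\Disc_N^k(w^k_{[N]}, X_{[N],[T]}^{\hat{J}_L,L+1})\ \ge\ \max_{q\in\mathcal{P}_{\hat{J}_L}}\Disc_N^k(q, X_{[N],[T]}^{\hat{J}_L,L+1})-\frac{C\abs{K}}{\xi(\abs{K}-1)}\sqrt{\frac{\ln(\abs{\hat{J}_L})}{N}}.
\]

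Finally I would average these $\abs{K}$ inequalities over $k$. The left-hand side averages to $\Disc_N(w^K_{[N]}, X_{[N],[T]}^{\hat{J}_L,L+1})$ by \eqref{netdisc}, and on the right-hand side the maximization decouples across classes, since $\Disc_N^k$ depends only on $q^k$: choosing each $q^k$ to attain $\max_q\Disc_N^k(q,\cdot)$ independently shows that $\tfrac{1}{\abs{K}}\sum_k\max_q\Disc_N^k(q,\cdot)=\max_{q^K\in(\mathcal{P}_{\hat{J}_L})^{\abs{K}}}\Disc_N(q^K,\cdot)$. This gives precisely the claimed inequality with the stated constant. The only genuinely delicate step is the first one: verifying that the averaging in the definition of $\Disc_N^k$ reproduces exactly the two class-conditional firing-rate averages coming out of the gain, and that the factors $N/N^k$ cancel cleanly so that no residual $\xi$-dependence survives in the identity (it enters only later, through the range of the gains). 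Once that bookkeeping is settled, the remainder is a one-line application of the regret bound together with the decoupling of the maximum.
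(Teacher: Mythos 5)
Your proof is correct and follows essentially the same route as the paper's: the identity $\tfrac{1}{N}\sum_{m} q\cdot g^{k}_m=\Disc_N^k(q,\cdot)$ (and likewise for the running weights $w^k_{[N]}$), the per-neuron regret bound with gains confined to $[-\tfrac{1}{\xi(\abs{K}-1)},\tfrac{1}{\xi}]$ yielding the constant $\tfrac{C\abs{K}}{\xi(\abs{K}-1)}$, and then averaging over $k\in K$. The only cosmetic difference is that the paper's proof starts from the pairwise form of the network discrepancy and performs an index swap between $k$ and $k'$ to rewrite it as the mean of per-neuron class discrepancies, a step you bypass by invoking \eqref{netdisc} directly and observing that the maximum decouples across classes since each $\Disc_N^k$ depends only on $q^k$.
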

The proposition, established by combining local regret bounds of output neurons, ensures that the network discrepancy of CHANI exceeds the maximum achievable with constant weights for the output layer,
 minus an error term which tends to zero.
Note that in this case the quantity $\Disc_N(q^K, X_{[N],[T]}^{\hat{J}_L,L+1})$, against which we compare CHANI’s network discrepancy, depends on the activity of CHANI’s final hidden layer. This is not the case in Corollary \ref{coro average}, which applies within the more restrictive CHANI EWA framework and relies on the present result as an initial step.

\section{Numerical results} \label{sec num res}

In this section, we present numerical results on two datasets: a simulated dataset consisting of colored shapes, for which the assumptions required by the theoretical results are satisfied, and the \texttt{digits} dataset from \texttt{scikit-learn}, for which not all assumptions hold, yet CHANI is nonetheless able to achieve strong performance.

\subsection{Simulated dataset} \label{sec synth data}
 We work in the framework of the example discussed in Section \ref{sec discuss nu}.
We consider the set of object natures $\Obj =\{{\color{blue}\bigcirc},{\color{blue}\Box}, {\color{blue} \triangle}, {\color{red} \bigcirc}, {\color{red} \Box}, {\color{red} \triangle}, {\color{green} \bigcirc}, {\color{green} \Box}, {\color{green} \triangle}\}$ and the set of features $I=\{\text{circle},\text{square},\text{triangle},\text{blue},\text{red},\text{green}\}$. The features are encoded in the input layer as follows: when presented with an object, the two input neurons coding for its shape and color spike independently with a fixed probability $p>0$ and the other input neurons stay silent. We trained CHANI on two tasks.
\newline

\noindent $\bullet$ A first task where the classes are $k_1 = \{{\color{blue}\bigcirc}, {\color{red}\bigcirc},{\color{green}\bigcirc}\}$ and $k_2= \{{\color{blue}\Box}, {\color{blue} \triangle}, {\color{red} \Box}, {\color{red} \triangle}, {\color{green} \Box}, {\color{green} \triangle}\}$. In this case, the classes can be described as combinations of single features (looking at the shape of the objects is sufficient to classify them): Assumption \ref{assump decomp} (Class decomposition) is verified for the network without hidden layers, and we have
\[
k_1 = \Obj^{\{\text{circle}\}} \quad \text{and} \quad k_2 = \Obj^{\{\text{square}\}}\cup \Obj^{\{\text{triangle}\}}.
\]

\noindent $\bullet$ A second task where the classes are $k_1 =\{{\color{blue}\bigcirc}, {\color{blue} \triangle},  {\color{red} \Box},  {\color{green} \bigcirc}, {\color{green} \Box}\}$ and $k_2 =\{{\color{blue}\Box},  {\color{red} \bigcirc},  {\color{red} \triangle},{\color{green} \triangle}\}$.
In this case, the classes cannot be described as combinations of single features, but as combinations of pairs of features: Assumption \ref{assump decomp} (Class decomposition) is verified for the network with one hidden layer, and we have
\[
k_1 = \Obj^{\{\text{circle},\text{blue}\}}\cup \Obj^{\{\text{triangle},\text{blue}\}} \cup \Obj^{\{\text{square},\text{red}\}}\cup \Obj^{\{\text{circle},\text{green}\}} \cup \Obj^{\{\text{square},\text{green}\}}
\]
and
\[
\quad k_2 = \Obj^{\{\text{square},\text{blue}\}}\cup \Obj^{\{\text{circle},\text{red}\}} \cup \Obj^{\{\text{triangle},\text{red}\}}\cup \Obj^{\{\text{triangle},\text{green}\}} .
\]

\begin{figure}
    \centering
    \includegraphics[width=13cm]{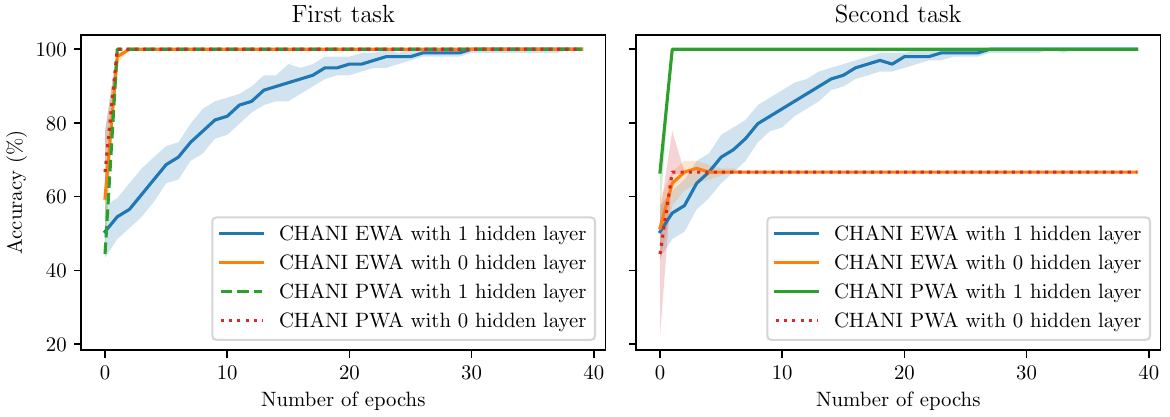}
    \caption{ Accuracy with confidence interval of level $0.9$ on simulated dataset for CHANI with EWA and CHANI with PWA, with zero and one hidden layer. 
    The parameters are $T=2000$, $M=40$, $N = 360$, $p=0.5$ and each configuration was run $100$ times. 
    For CHANI with EWA and zero hidden layer: $\eta^1 = 0.05$. For CHANI with EWA and one hidden layer: $\eta^1 = 3$ and $\eta^2 = 0.05$. For CHANI with PWA: $b=2$. For CHANI with EWA and PWA with one hidden layer: $s_1 = 0.1$. For each configuration, the training data are organized into epochs, where one epoch corresponds to a sequence of the $9$ object natures presented in random order. After each epoch of the output layer training, the network’s accuracy is evaluated on a test set consisting of $99$ objects.}
     \label{fig easy task}
\end{figure}

On Figure \ref{fig easy task} is represented the accuracy with confidence interval of level $0.9$ accross training for several configurations of the network: CHANI with either EWA or PWA as expert aggregation algorithm, and with zero or one hidden layer. 

For the first task, which is theoretically achievable by CHANI EWA with zero or one hidden layer according to Theorem \ref{theo specific case}, we can see that every method achieves $100\%$ accuracy. However, CHANI EWA or PWA with zero hidden layer and CHANI PWA with one hidden layer learn faster and with less variance than CHANI EWA with one hidden layer.

For the second task, which is theoretically achievable by CHANI EWA with one hidden layer and not by CHANI EWA with zero hidden layer according to Theorem \ref{theo specific case}, we can see that CHANI EWA or PWA with zero hidden layer  does not exceed $67\%$ accuracy, whereas CHANI with EWA or PWA with one hidden layer reaches $100\%$. However, CHANI PWA is still faster to learn and with less variance in this case.

\subsection{Handwritten digits dataset \label{sec:digit}}

We trained CHANI on the \texttt{digits} dataset of \texttt{scikit-learn}. This dataset provides $1797$ images of $64$ pixels of handwritten digits which were randomly separated 
in a training set ($80\%$ of the images) and a testing set ($20\%$ of the images). For computational reasons, we decided to use this dataset rather than one with better resolution. 
The numerical results for CHANI with no hidden layer (which corresponds to the network of our previous work HAN) are visible in Table \ref{Table no mid}. Although CHANI with EWA performs much better than CHANI with PWA, its accuracy is only around $50\%$. 

\begin{table}[h!] 
\begin{center}
    \begin{tabular}{|c|c|c|}
        \hline
         Expert Aggregation algorithm & Accuracy & Confidence interval of level $0.9$ \\
         \hline
         EWA & $53.0$ & $[50.8, 55.6]$ \\
         PWA & $14.8$ & $[14.7, 15.0]$ \\
        \hline
    \end{tabular}
\end{center}
\caption{Numerical results with no hidden layer. The parameters are $T=2000$, $N=1437$, $\eta^1 = 0.0005$, $b = 2$. We made $100$ 
    realizations.}
    \label{Table no mid}
\end{table}

The results for CHANI with one hidden layer are visible in Figure \ref{fig hid}. The blue curve (resp. the green curve) represents 
the percentage of correct classifications of the testing set of CHANI with the expert aggregation algorithm EWA (resp. PWA) in function of the number of selected hidden neurons. Before selection, 
there was $2016$ neurons. We can see that no matter the number of selected neurons, CHANI with PWA performs badly and does not exceed a $40\%$ correct classifications. 
Besides, its performance is unstable. On the other hand, the performance of CHANI with EWA increases steadily with the number of selected neurons, to station at around $83.5\%$, and exceeds $80\%$ from $80$ selected neurons. This suggests that for a more complex task than that of the previous section, the expert aggregation EWA performs better.

\begin{figure}
    \centering
    \includegraphics[width=8cm]{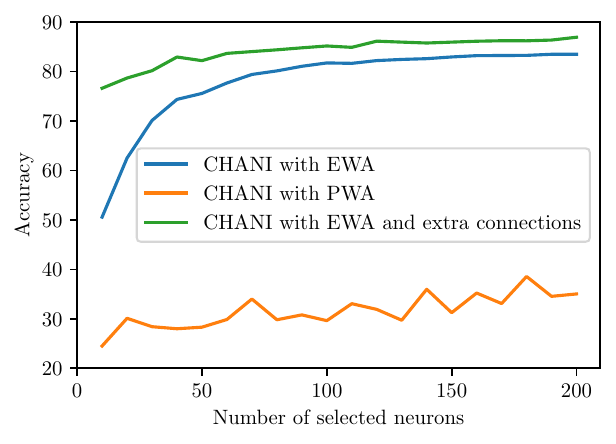}
    \caption{Numerical results on digits dataset with one hidden layer for CHANI with EWA, CHANI with PWA and CHANI with EWA and extra connections. The parameters $T=2000$, $M=40$, $N = 1357$ and $20$
    realizations were made for each number of selected neurons. For CHANI with EWA: $\eta^1 = 3$, $\eta^2 = 0.002$. For CHANI with PWA: $b=2$ for each layer. For CHANI with EWA and extra connections: $\eta^1 = 3$, $\eta^2 = 0.007$, 
    $\alpha=0.7$ and $\beta = 0.25$.}
     \label{fig hid}
\end{figure}

The numerical results with two hidden layers and EWA can be seen in Table \ref{Table 2 mid}. Here, we fixed the number of selected neurons of the first hidden layer and varied the number of selected neurons
 of the second one. We can see that in all cases, the results, comparable to CHANI with no hidden layer, are far less good than with one hidden layer. There are several possible explanations. 
 A good representation of classes could be combination of correlation between two features and not three (\ie a digit could be well represented by combinations of pixel tuples and not triplets).
Numerical results could also be affected by the phenomenon of decreasing spiking probability from one layer to the next: there may not be enough spikes in the system in order to have a second hidden layer
   with meaningful cumulated gains. Another reason could be that this dataset does not respect all the assumptions that we made in order to have theoretical guarantees.
\newline

\begin{table}[h!] 
\begin{center}
    \begin{tabular}{|c|c|}
        \hline
         Number of selected neurons on the second hidden layer & Accuracy \\
         \hline
          $50$ & $44.6$ \\
        $100$ & $49.7$ \\
        $150$ & $51.0$ \\
        \hline
    \end{tabular}
\end{center}
\caption{Numerical results with two hidden layers for CHANI with EWA. The parameters are $T=2000$, $M=40$, $N=1277$, $\eta^3 = 0.0005$. We made $10$ realizations, and for each we selected 
$100$ of hidden neurons in the first hidden layer.}
 \label{Table 2 mid}
\end{table}
\textbf{Addition of connections.} In order to improve CHANI performance, we added connections between input and output neurons in the case $L=1$. Indeed, a digit could be well represented by the combination of single pixels and pixel tuples. To do this, we extend the gains of hidden neurons w.r.t. output neurons defined by \eqref{eq gain output} to input neurons as well, and we add a multiplicative parameter $\alpha >0$ in order to find balance between hidden and input neurons gains. Then the gain of an input neuron $i$ w.r.t. output neuron $k$ when presented with object $o^{L+1}_m$ is
\begin{equation*} 
    g^{j\to k}_m := \left\{
    \begin{array}{ll}
          \alpha  \brac{X^{i,L+1}_{m,t}}_{t\in[T]} \times \frac{N}{N^k} &\text{if } o_m^{L+1}\in k  \\ 
          \\
        -  \alpha \brac{X^{i,L+1}_{m,t}}_{t\in[T]} \!\times\! \frac{N}{N^{k'}} \!\times\! \frac{1}{\abs{K}-1} &\text{if } o_m^{L+1} \in k'\neq k 
    \end{array}
    \right. .
\end{equation*}

Indeed, as the neurons spiking probabilities decrease from one layer to the next, the empirical spiking probabilities of neurons from distinct layers may have very different order of magnitude. For the same reason, we introduce a parameter $\beta \in [0,1]$ as well in the formula of the conditional spiking probability of an output neuron $k$ when presented 
 with object $o^{L+1}_m$:
\[p^k_{m,t}(w^k_m) = w^{\hat{J}_L \to k}_m \cdot X^{\hat{J}_1, L+1}_{m,t-1} + \beta w^{I \to k}_m \cdot X^{I, L+1}_{m,t-1} \]
where $w^k_m:=(w^{l \to k}_m)_{l\in I\cup \hat{J}_L}$, $ w^{\hat{J}_L \to k}_m:= (w^{j \to k}_m)_{j\in\hat{J}_L}$ and  $ w^{I \to k}_m:= (w^{i \to k}_m)_{i\in I}$. This parameter $\beta$ allows to reduce the impact
 of input neurons, which spike more frequently than hidden neurons, on the behavior of output neurons.

The numerical results with this configuration and the expert aggregation EWA correspond to the green curve of Figure \ref{fig hid}. We can see that CHANI performs better with these new connections: with only
 $10$ selected neurons, its accuracy on the testing set is already at $76.6\%$ and it grows reach $87\%$ for $200$ selected neurons, and exceeds $84\%$ from $70$ selected neurons.
\newline

\textbf{Comparison with STDP:} The same dataset has been used to train spiking neural networks with STDP by \cite{rybka2024comparison, 10.1007/978-3-030-96993-6_48}. Their results vary 
from $83\%$ to $95\%$ of accuracy depending on the network settings: in some of them they are comparable to CHANI's performance. Note that these models are more complex and do not have any theoretical guarantees.
\newline

To conclude, CHANI with EWA succeeds the learning task with reasonable results. This is a huge improvement in comparison to our numerical results of \cite{jaffard:hal-04065229}, where we trained HAN solely on the dataset of section \ref{sec synth data}. By adding layers and taking neuronal synchronization into account, CHANI can now learn more realistic tasks.

\section*{Conclusion}

In this paper, we introduced our cognitive network, CHANI (Correlation-based Hawkes Aggregation of Neurons with bio-Inspiration), which provably learns to classify objects, and we presented theoretical insights into CHANI's learning capabilities with any number of hidden layers. First, we interpreted regret bounds resulting from the use of expert aggregation algorithms as learning rule from a learning perspective. Subsequently, in the specific framework CHANI EWA, we demonstrated that at the limit hidden neurons encode feature correlations, and we proved that our neuron selection method prioritizes those encoding the most significant correlations. Furthermore, through an analysis of the asymptotic behavior of output neurons, we demonstrated CHANI's ability to learn the correct classification of objects on average and even asymptotically when classes are defined by feature correlations. Lastly, we calculated the VC-dimension of our network. To our knowledge, this study is the first one to establish that local learning rules enable a biologically inspired network to learn to recognize complex concepts by forming neuronal assemblies, inducing global learning. However, several challenging research avenues remain unexplored. In order to make our model more realistic and computationally efficient, one interesting direction is investigating CHANI's behavior when objects are not presented for a fixed duration, but only until one output neuron has spiked significantly more than the others. This would correspond to a reaction time and would have a cognitive interpretation. Another innovative line of research is extending these findings by establishing regret bounds for other local rules, such as STDP (Spike-Timing-Dependent Plasticity).


\acks{This research was supported by the French government, through CNRS (eXplAIn team), the UCA$^{Jedi}$ and 3iA Côte d'Azur Investissements d'Avenir managed by the National Research Agency (ANR-15 IDEX-01 and ANR-19-P3IA-0002), directly by the ANR project ChaMaNe (ANR-19-CE40-0024-02) and GraVa (ANR-18-CE40-0005), and finally by the interdisciplinary Institute for Modeling in Neuroscience and Cognition (NeuroMod).}


\appendix

All the numerical results can be reproduced thanks to the code available at \url{https://github.com/SophieJaffard/CHANI}.
In Appendix \ref{sec notations}, we summarize all the notations and in Appendix \ref{app reg} and \ref{app spe} we provide the proofs of our results. We do not provide the proofs in the order in which the results are enunciated in the article, but from the most general results to the most specific ones.

\section{Notations} \label{sec notations}
\begin{longtable}{|l|l|} \caption{Table of notations} \label{tab_notations} \\
  \hline
   Notation & Description \\
   \hline
      $\Obj$ & set of objects \\
      \hline
      $o$ & nature of an object \\
     \hline
     $M$ & number of objects presented to the network to train a hidden layer  \\
     \hline
     $N$ & number of objects presented to the network to train the output layer \\
     \hline
     $m$ & index of an object \\
     \hline
     $T$ & number of time steps during which one object is presented \\
    \hline 
    $I$ & set of features and input neurons \\
    \hline
    $i$ & index of an input neuron or a feature\\
    \hline
    $K$ & set of classes and output neurons \\
    \hline
    $k$ & index of an output neuron or a class \\
    \hline
    $L$ & number of hidden layers \\
    \hline
    $l$ & depth of a hidden layer \\
    \hline
    $I_l$ & set of every subsets of $I$ having cardinal $2^l$\\
    \hline
    $J_l$ & set of hidden neurons of depth $l$ before selection \\
    \hline
    $\hat{J}_l$ & set of selected hidden neurons of depth $l$ \\
    \hline
    $\J_l$ & set of hidden neurons of interest of depth $l$ \\
    \hline
    $j = j_S$ & index of a hidden neuron coding for a set $S\subset I$ \\
    \hline
    $o^l_m$ & nature of the $m^{th}$ object of the training phase of layer $l$ \\
    \hline
    $\mathcal{P}_E$ & set of probability distributions over the set $E$ \\
    \hline
   \end{longtable}

\allowdisplaybreaks

\section{Proofs of section \ref{sec reg}} \label{app reg}

\subsection{Proof of Proposition \ref{prop reg hid}}
Let $l\in [L]$, $j=j_S\in J_l$, where $S= S_1\cup S_2\subset I$ with $S_1$ and $S_1$ encoded in the previous layer. Then
\begin{align*}
    \sum_{m=1}^M w^j_m \cdot g^j_m &= \sum_{m=1}^M w^j_m  \cdot (\hat{\rho}^l_m(\{j_{S_1},j_{S_2},j'\})_{j'\in \hat{J}_{l-1}} \\
& = \sum_{m=1}^M w^j_m  \cdot (\brac{X^{j_{S_1},l}_{m,t}X^{j_{S_2},l}_{m,t}X^{j',l}_{m,t}}_{t\in[T]})_{j'\in \hat{J}_{l-1}}  \\
& =  \sum_{m=1}^M \brac{ X^{j_{S_1},l}_{m,t}X^{j_{S_2},l}_{m,t} ( w^j_m  \cdot X^{\hat{J}_{l-1},l}_{m,t}) }_{t\in[T]} 
\end{align*}
and
\begin{align*}
     \sum_{m=1}^M w^j_m \cdot g^j_m + \sum_{m=1}^M \brac{ -\nu X^{j_{S_1},l}_{m,t}X^{j_{S_2},l}_{m,t} }_{t\in[T]} &=  \sum_{m=1}^M \brac{ X^{j_{S_1},l}_{m,t}X^{j_{S_2},l}_{m,t} \psi_{m,t}^{j,l}(w^j_m) }_{t\in[T]}.
\end{align*}
Similarly, for $q\in \mathcal{P}_{\hat{J}_{l-1}}$,
\begin{align*}
     \sum_{m=1}^M q \cdot g^j_m + \sum_{m=1}^M \brac{ -\nu X^{j_{S_1},l}_{m,t}X^{j_{S_2},l}_{m,t} }_{t\in[T]} &=  \sum_{m=1}^M \brac{ X^{j_{S_1},l}_{m,t}X^{j_{S_2},l}_{m,t} \psi_{m,t}^{j,l}(q) }_{t\in[T]}.
\end{align*}
Hence the regret of neuron $j$ is 
$
R^j_M = \max_{q\in \mathcal{P}_{\hat{J}_{l-1}}}  \sum_{m=1}^M \Brac{ X^{j_{S_1}}_{m,t}X^{j_{S_2}}_{m,t} (\psi_{m,t}^{j,l}(q) - \psi_{m,t}^{j,l}(w^j_m)) }_{t\in[T]}$.
The gains take value in $[0,1]$ so we get the bound by applying the regret bound of the expert aggregation given by Assumption \ref{assumption regret bound hid}.

\subsection{Proof of Proposition \ref{prop reg class disc}}

Let $k\in K$, $j\in \hat{J}_L$. Then
\begin{align*}
   \frac{1}{N} \sum_{m=1}^N w^k_m \cdot g^k_m  &= \frac{1}{N^k}\sum_{m, o_m^{L+1}\in k }  w^k_m \cdot \brac{X^{j,L+1}_{m,t}}_{t\in[T]} \\
   & \quad - \Brac{ \frac{1}{N^{k'}} \sum_{m, o_m^{L+1}\in k'}  w^k_m \cdot \brac{X^{j,L+1}_{m,t}}_{t\in[T]} }_{k' \ / \ k'\neq k} \\
   & = \Disc^k_N(w^k_{[N]}, X_{[N],[T]}^{\hat{J}_L,L+1} )
\end{align*}
and similarly for $q^k\in \mathcal{P}_{\hat{J}_{L-1}}$, we have
    $\frac{1}{N} \sum_{m=1}^N q^k \cdot g^k_m = \Disc^k_N(q^k, X_{[N],[T]}^{\hat{J}_L,L+1})$.
So the regret of neuron $k$ divided by $N$ is
\begin{equation} \label{eqpr}
    \frac{R^k_N}{N} = \max_{q^k\in \mathcal{P}_{\hat{J}_{L-1}}} \Disc^k_N(q^k,X_{[N],[T]}^{\hat{J}_L,L+1}) - \Disc^k_N(w^k_{[N]},X_{[N],[T]}^{\hat{J}_L,L+1}) \leq \frac{C\abs{K}}{\xi(\abs{K}-1)} \sqrt{\frac{\ln(\abs{\hat{J}_L})}{N}}
\end{equation}
according to Assumption \ref{assumption regret bound out}, because the gains take value in $[- \frac{1}{\xi(\abs{K}-1)}, \frac{1}{\xi}]$. Then

\begin{align*}
    \Disc_N(w_{[N]}^K,X_{[N],[T]}^{\hat{J}_L,L+1}) &= \Brac{\hat{p}^k_m(w^k_m) - \hat{p}^{k'}_m(w^{k'}_m)  }_{\begin{subarray}{l}
        k\in K \\ k'\neq k \\ m, o_m^{L+1} \in k
    \end{subarray}} \\
    &= \Brac{\hat{p}^k_m(w^k_m)  }_{\begin{subarray}{l}
        k\in K \\ m, o_m^{L+1} \in k
    \end{subarray}} - \Brac{\hat{p}^{k'}_m(w^{k'}_m)}_{\begin{subarray}{l}
        k\in K \\ k'\neq k \\ m, o_m^{L+1} \in k
    \end{subarray}}
\end{align*}
Let us exchange the name of the indexes $k$ and $k'$ in the second term.
\begin{align*}
    &\Disc_N(w_{[N]}^K,X_{[N],[T]}^{\hat{J}_L,L+1}) = \Brac{\hat{p}^k_m(w^k_m)  }_{\begin{subarray}{l}
        k\in K \\ m, o_m^{L+1} \in k
    \end{subarray}} - \Brac{\hat{p}^{k}_m(w^{k}_m)}_{\begin{subarray}{l}
        k\in K \\ k'\neq k \\ m, o_m^{L+1} \in k'
    \end{subarray}} \\
    &= \Brac{ \brac{\hat{p}^k_m(w^k_m)  }_{\begin{subarray}{l}
         m, o_m^{L+1} \in k
    \end{subarray}} - \brac{\hat{p}^{k}_m(w^{k}_m)}_{\begin{subarray}{l}
        k'\neq k \\ m, o_m^{L+1} \in k'
    \end{subarray}} }_{k\in K}
    = \brac{ \Disc_N^k(w^k_{[N]},X_{[N],[T]}^{\hat{J}_L,L+1}) }_{k\in K}.
\end{align*}
and similarly $\Disc_N(q^K, X_{[N],[T]}^{\hat{J}_L,L+1}) = \brac{ \Disc_N^k(q^k, X_{[N],[T]}^{\hat{J}_L,L+1}) }_{k\in K}$. Therefore, according to \eqref{eqpr},
\begin{equation*}
     \Disc_N(w_{[N]}^K,X_{[N],[T]}^{\hat{J}_L,L+1}) \geq \max_{q^K\in \mathcal{P}_{\hat{J}_L}^{\abs{K}}}  \Disc_N(q^K, X_{[N],[T]}^{\hat{J}_L,L+1}) -  \frac{C\abs{K}}{\xi(\abs{K}-1)} \sqrt{\frac{\ln(\abs{\hat{J}_L})}{N}}.
\end{equation*}

\section{Proofs of section \ref{sec theo results specif set}} \label{app spe}
\subsection{Preliminary propositions}

\begin{proposition} \label{prop prelim hoef}
    Let $(\Omega, \mathcal{F}, \mathbb{P})$ be a probability space, let $\mathcal{G}\subset \mathcal{F}$ be a $\sigma$-algebra, and let
    $E,F$ be $\mathcal{G}$-measurable random finite sets. 
    
    Let $A,B\in \mathbb{N}^*$, and let $(Z^{e\to f}_{a,b})_{e\in E, f\in F, 1\leq a \leq A, 1\leq b \leq B}$ be
    random variables bounded by $1$ such that for every $f\in F$, $e\in E$ the variables 
    $(Z^{e\to f}_{a,b})_{1\leq a \leq A, 1\leq b \leq B}$ are independent knowing the $\sigma$-algebra $\mathcal{G}$.
    Let $\alpha>0$. Then with probability $1-\alpha$, for all $e\in E, f\in F$
    \[
        \left| \sum_{a=1}^A \brac{Z^{e\to f}_{a,b}}_{b\in [B]} - 
        \mathbb{E}\Big[ \sum_{a=1}^A \brac{Z^{e\to f}_{a,b}}_{b\in [B]} \mid \mathcal{G}\Big] \right| <   \sqrt{\frac{A}{2B}\ln\Big( \frac{2\abs{E}\abs{F}}{\alpha}\Big)}.  
    \]
\end{proposition}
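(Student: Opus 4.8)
The plan is to rewrite each of the quantities as a rescaled sum of conditionally independent bounded variables, apply a conditional Hoeffding inequality to each pair $(e,f)$, and conclude with a union bound taken after conditioning on $\mathcal{G}$. Writing $S_{e,f} := \sum_{a=1}^A \brac{Z^{e\to f}_{a,b}}_{b\in[B]}$, I first note that
$$S_{e,f} = \frac{1}{B}\sum_{a=1}^A\sum_{b=1}^B Z^{e\to f}_{a,b},$$
so that $S_{e,f}$ is $1/B$ times a sum of $AB$ variables taking values in $[0,1]$ (we read ``bounded by $1$'' as valued in $[0,1]$, which is what the applications of this proposition require and what yields the stated constant). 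By hypothesis these $AB$ summands are independent conditionally on $\mathcal{G}$. The key structural observation is that $E$, $F$, their cardinalities, and hence the threshold $t := \sqrt{\frac{A}{2B}\ln(2\abs{E}\abs{F}/\alpha)}$ are all $\mathcal{G}$-measurable; once we condition on $\mathcal{G}$ they can therefore be treated as deterministic.

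Working under $\mathbb{E}[\,\cdot\mid\mathcal{G}]$, fix a realization of $\mathcal{G}$: then $E$ and $F$ are fixed finite sets, $t$ is a fixed number, and for each pair the rescaled summands $\frac1B Z^{e\to f}_{a,b}$ lie in $[0,1/B]$ and are conditionally independent. The conditional version of Hoeffding's inequality --- the same moment-generating-function argument carried out under the conditional expectation --- gives, almost surely,
$$\mathbb{P}\big(\,\abs{S_{e,f} - \mathbb{E}[S_{e,f}\mid\mathcal{G}]} \geq t \;\big|\; \mathcal{G}\,\big) \leq 2\exp\!\Big(-\frac{2t^2}{A/B}\Big) = 2\exp\!\Big(-\frac{2Bt^2}{A}\Big),$$
where I used that the $AB$ summand ranges contribute $\sum (1/B)^2 = A/B$ to the Hoeffding denominator.

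It remains to union bound over pairs. Conditionally on $\mathcal{G}$ there are exactly $\abs{E}\abs{F}$ pairs $(e,f)$, and with the chosen $t$ one has $2\exp(-2Bt^2/A) = \alpha/(\abs{E}\abs{F})$, whence
$$\mathbb{P}\big(\,\exists\,(e,f):\ \abs{S_{e,f} - \mathbb{E}[S_{e,f}\mid\mathcal{G}]} \geq t \;\big|\; \mathcal{G}\,\big) \leq \abs{E}\abs{F}\cdot \frac{\alpha}{\abs{E}\abs{F}} = \alpha \quad \text{a.s.}$$
Taking expectation over $\mathcal{G}$ removes the conditioning, and passing to the complementary event gives that, with probability at least $1-\alpha$, the strict inequality $\abs{S_{e,f} - \mathbb{E}[S_{e,f}\mid\mathcal{G}]} < t$ holds simultaneously for all $e\in E$, $f\in F$, which is exactly the claim.

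The computation itself is routine; the one point that requires care is the legitimacy of applying Hoeffding with the random index sets $E,F$ and the random threshold $t$. This is precisely what conditioning on $\mathcal{G}$ resolves: since $E$, $F$ and $t$ are $\mathcal{G}$-measurable, after conditioning the union bound is over a deterministic family of pairs with a deterministic threshold, and the conditional independence hypothesis supplies exactly the independence that Hoeffding needs.
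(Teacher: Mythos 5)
Your proof is correct and follows essentially the same route as the paper: a conditional Hoeffding inequality for each pair $(e,f)$ (valid since the threshold and index sets are $\mathcal{G}$-measurable, hence deterministic after conditioning), a union bound applied conditionally on $\mathcal{G}$, and finally integrating out the conditioning. Your explicit computation of the Hoeffding denominator $A/B$ and your remark on reading ``bounded by $1$'' as $[0,1]$-valued are just more detailed renderings of the same argument.
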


\begin{proof}
Let $\alpha>0$, $e\in E$, $f\in F$. The variables 
$(Z^{e\to f}_{a,b})_{1\leq a \leq A, 1\leq b \leq B}$ are independent knowing the $\sigma$-algebra $\mathcal{G}$ and bounded 
by $1$ so according to Hoeffding's inequality, for $\beta>0$ and $\mathcal{G}$-measurable,
\[
\mathbb{P}\Bigg(
  \left| \sum_{a=1}^A \brac{X^{e\to f}_{a,b}}_{b\in [B]} - 
\mathbb{E}\Big[ \sum_{a=1}^A \brac{X^{e\to f}_{a,b}}_{b=1,\dots,B} \mid \mathcal{G}\Big] \right| \geq \beta \mid \mathcal{G}
\Bigg)  
\leq 2e^{-2\beta^2 \frac{B}{A}}.   
\] 
Let  
\[D^c := \Big\{\exists e\in E, f\in F,  \left| \sum_{a=1}^A \brac{X^{e\to f}_{a,b}}_{b=1,\dots,B} - 
\mathbb{E}\Big[ \sum_{a=1}^A \brac{X^{e\to f}_{a,b}}_{b\in [B]} \mid \mathcal{G}\Big] \right| \geq \beta\Big\}.\]
Then $\mathbb{P}(D^c \mid \mathcal{G}) \leq \abs{E}\abs{F}2e^{-2\beta^2 \frac{B}{A}}$. Let us choose
$\beta = \sqrt{\frac{A}{2B}\ln\Big( \frac{2\abs{E}\abs{F}}{\alpha}\Big)}$. Then $\mathbb{P}(D^c \mid \mathcal{G})\leq \alpha$.
By integrating we get $\mathbb{P}(D^c) \leq \alpha$.

\end{proof}

\begin{proposition} \label{prop prelim EWA}
        Let $d\in \mathbb{N}$, $A^1,\dots, A^d\in \mathbb{R}$. Let $E$ be the subset of $[d]$ defined by $E:= \arg\max_i A^i$, and let
        $w_i(M) := \frac{\exp(\sqrt{M}A^i)}{\sum_{l=1}^d \exp(\sqrt{M}A^l)}$. Let $A=\max_i A^i$. 
        \begin{itemize}
            \item If $E=[d]$ then for every $i\in [d]$, $w_i(M) = \frac{1}{d}.$
            \item If $E\subsetneq [d]$, let $\Delta := \max_i A^i - \max_{i\notin E} A^i$. Then, for all $i$,
            \[\Abs{w_i(M) - \frac{1}{\abs{E}}\mathbb{1}_{i\in E}} \leq \frac{1}{\abs{E}}\max\Big(1,\frac{d-\abs{E}}{\abs{E}}\Big) \exp(-\sqrt{M}\Delta).\]
        \end{itemize}
\end{proposition}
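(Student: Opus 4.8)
The plan is to reduce everything to elementary estimates on the normalized exponential weights by factoring out the dominant term $\exp(\sqrt{M}A)$. The first bullet is immediate: if $E=[d]$, then every $A^i$ equals the maximum $A$, so all numerators $\exp(\sqrt{M}A^i)$ coincide, the denominator equals exactly $d\exp(\sqrt{M}A)$, and therefore $w_i(M)=1/d$ for every $i$.

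For the second bullet, I would set $r := \abs{E}$ and rescale numerator and denominator by $\exp(-\sqrt{M}A)$, so that
\[
w_i(M) = \frac{\exp\big(\sqrt{M}(A^i-A)\big)}{r + S}, \qquad S := \sum_{l\notin E} \exp\big(\sqrt{M}(A^l - A)\big),
\]
using that each of the $r$ indices in $E$ contributes $\exp(\sqrt{M}(A^l-A))=1$ to the denominator. Since $A^l - A \leq -\Delta$ for every $l\notin E$, each term of $S$ is at most $\exp(-\sqrt{M}\Delta)$, whence $0 \leq S \leq (d-r)\exp(-\sqrt{M}\Delta)$; in particular the denominator satisfies $r + S \geq r$, which will supply all the lower bounds below.

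Then I would split according to whether $i\in E$. If $i\in E$ the numerator equals $1$ and $\mathbb{1}_{i\in E}=1$, so
\[
\Abs{w_i(M) - \tfrac{1}{r}} = \Abs{\tfrac{1}{r+S} - \tfrac{1}{r}} = \frac{S}{r(r+S)} \leq \frac{S}{r^2} \leq \frac{1}{r}\cdot\frac{d-r}{r}\,\exp(-\sqrt{M}\Delta).
\]
If instead $i\notin E$, then $\mathbb{1}_{i\in E}=0$, the numerator is at most $\exp(-\sqrt{M}\Delta)$ and the denominator is at least $r$, so $w_i(M) \leq \tfrac{1}{r}\exp(-\sqrt{M}\Delta)$. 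Taking the larger of the two constants $\tfrac{1}{r}\cdot\tfrac{d-r}{r}$ and $\tfrac{1}{r}$ then yields the claimed uniform bound $\tfrac{1}{\abs{E}}\max\big(1, \tfrac{d-\abs{E}}{\abs{E}}\big)\exp(-\sqrt{M}\Delta)$.

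The argument is essentially bookkeeping, so I do not expect a serious obstacle. The only points requiring care are recognizing the normalization that isolates $\exp(-\sqrt{M}\Delta)$ as the natural small parameter, using the crude but sufficient lower bound $r+S\geq r$ on the denominator rather than a tighter estimate, and checking that the two regimes $i\in E$ and $i\notin E$ produce exactly the two arguments of the maximum appearing in the statement.
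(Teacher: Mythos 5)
Your proof is correct and follows essentially the same route as the paper's: both split on $i\in E$ versus $i\notin E$, bound each off-maximum exponential by $\exp(-\sqrt{M}\Delta)$, lower-bound the denominator by the contribution of the $\abs{E}$ maximal terms, and obtain the two constants $\tfrac{d-\abs{E}}{\abs{E}^2}$ and $\tfrac{1}{\abs{E}}$ whose maximum appears in the statement. Your upfront normalization by $\exp(\sqrt{M}A)$ and the direct difference computation $\tfrac{S}{r(r+S)}\leq \tfrac{S}{r^2}$ are just cosmetic variants of the paper's use of $\tfrac{1}{1+x}\geq 1-x$.
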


\begin{proof}
Let $A_{\text{max}}:= \max_i A^i$.
\newline
\textbf{Case} $E=[d]$. Then for all $i\in [d]$
    \[
    w_i(M) =  \frac{\exp(\sqrt{M}A_{\text{max}})}{\sum_{l=1}^d \exp(\sqrt{M}A_{\text{max}})} = \frac{1}{d}.
    \]
\textbf{Case} $E\subsetneq [d]$. Let $A_{\text{max bis}}:= \max_{i\notin E} A^i$. Then
\begin{equation} \label{eq1}
    w_i(M) \leq \frac{\exp(\sqrt{M}A^i)}{ \abs{E}\exp(\sqrt{M}A_{\text{max}})} = \frac{1}{\abs{E}} \exp(-\sqrt{M}( A_{\text{max}} - A^i))
\end{equation}
  Let $i\in E$.
  \begin{align*}
      w_i(M) &\geq \frac{\exp(\sqrt{M}A_{\text{max}})}{ \abs{E}\exp(\sqrt{M}A_{\text{max}}) + (d-\abs{E})\exp(\sqrt{M}A_{\text{max bis}}) } \\
      &= \frac{1}{\abs{E}}\frac{1}{1+\frac{d-\abs{E}}{\abs{E}} \exp(-\sqrt{M}(A_{\text{max}} - A_{\text{max bis}} ))} \\
      &\geq \frac{1}{\abs{E}} \Big(
1 - \frac{d-\abs{E}}{\abs{E}} \exp(-\sqrt{M}(A_{\text{max}} - A_{\text{max bis}} ))
      \Big) \\
      &= \frac{1}{\abs{E}} - \frac{d-\abs{E}}{\abs{E}^2} \exp(-\sqrt{M}(A_{\text{max}} - A_{\text{max bis}} )) 
  \end{align*}
And thanks to \eqref{eq1}, $w_i(M) \leq \frac{1}{\abs{E}}$.
Hence
\[\frac{1}{\abs{E}} - \frac{d-\abs{E}}{\abs{E}^2} \exp(-\sqrt{M}(A_{\text{max}} - A_{\text{max bis}} ))\leq  w_i(M) \leq \frac{1}{\abs{E}}.\]
Let $i\in [d]\setminus E$. According to \eqref{eq1}
\[
0 \leq  w_i(M) \leq  \frac{1}{\abs{E}} \exp(-\sqrt{M}( A_{\text{max}} - A_{\text{max bis}})) .
\]
\end{proof}
As proved in \cite[Proposition 4]{gao2018properties}, we have
\begin{proposition} \label{prop iaf}
Let $d\in \mathbb{N}$, $\eta\in \mathbb{R}$. Then the softmax function $f$ with temperature $\eta$ defined on $\mathbb{R}^d$ by $x \mapsto \softmax(\eta x)$ is $\eta$-Lipschitz.
\end{proposition}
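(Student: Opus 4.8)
The plan is to bound the operator norm of the Jacobian of $f$ uniformly over $\R^d$ and then integrate along line segments. Writing $p := f(x) = \softmax(\eta x)$, so that $p_i = e^{\eta x_i}/\sum_{l} e^{\eta x_l}$ is a probability vector, a direct computation of the partial derivatives yields $\partial p_i/\partial x_j = \eta\, p_i(\delta_{ij} - p_j)$, i.e.
\[
Df(x) = \eta\big(\operatorname{diag}(p) - p p^\top\big).
\]
Since $f$ is smooth this computation is routine, and I would simply record it; the real work is to control the symmetric matrix $M(p) := \operatorname{diag}(p) - p p^\top$ in a way that does not depend on $x$.

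The core step is to show $\norm{M(p)}_{\mathrm{op}} \le 1$ for every probability vector $p$. First, $M(p)$ is positive semidefinite: for any $v\in\R^d$,
\[
v^\top M(p) v = \sum_i p_i v_i^2 - \Big(\sum_i p_i v_i\Big)^2 \ge 0
\]
by Jensen's inequality (this quadratic form is exactly the variance of $v$ under $p$). Second, discarding the nonnegative squared term and using $p_i \le 1$ gives $v^\top M(p) v \le \sum_i p_i v_i^2 \le \sum_i v_i^2 = \norm{v}_2^2$. As $M(p)$ is symmetric and PSD, its operator norm equals its largest eigenvalue $=\max_{\norm{v}_2=1} v^\top M(p) v \le 1$; equivalently one may note $M(p) \preceq \operatorname{diag}(p) \preceq I$ and invoke monotonicity of the top eigenvalue. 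Consequently $\norm{Df(x)}_{\mathrm{op}} = \abs{\eta}\,\norm{M(p)}_{\mathrm{op}} \le \abs{\eta}$ for all $x$.

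Finally I would conclude with the mean value inequality: since $f$ is $C^1$ with uniformly bounded Jacobian, for any $x,y$,
\[
f(y) - f(x) = \int_0^1 Df\big(x + s(y-x)\big)(y-x)\,ds,
\]
so that $\norm{f(y) - f(x)}_2 \le \sup_{s\in[0,1]} \norm{Df(x+s(y-x))}_{\mathrm{op}}\,\norm{y-x}_2 \le \abs{\eta}\,\norm{y-x}_2$, which is the claimed bound (with constant $\abs{\eta}$, matching $\eta$-Lipschitz for $\eta \ge 0$). The only genuinely delicate point is the operator-norm estimate on $M(p)$; the Jacobian formula and the integration argument are both standard. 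An entirely equivalent route, should one prefer to avoid computing the Jacobian by hand, is to observe that $f$ is the gradient of the log-sum-exp function $x\mapsto \eta^{-1}\log\sum_l e^{\eta x_l}$, whose Hessian is precisely $M(p)$, so the same PSD bound controls the Lipschitz constant of $\nabla(\text{log-sum-exp}) = f$.
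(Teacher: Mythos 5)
Your proof is correct, but there is nothing in the paper to compare it against line by line: the paper does not prove this proposition itself, it simply imports it by citation, writing ``As proved in \cite[Proposition 4]{gao2018properties}, we have\dots''. So you have supplied a self-contained argument where the authors defer to a reference. Your route --- the Jacobian formula $Df(x)=\eta\bigl(\operatorname{diag}(p)-pp^\top\bigr)$, the positive-semidefinite bound $0 \preceq \operatorname{diag}(p)-pp^\top \preceq \operatorname{diag}(p) \preceq I$ giving $\norm{Df(x)}_{\mathrm{op}}\le\abs{\eta}$ uniformly in $x$, and then the mean value inequality along segments --- is the standard way to establish this fact and is essentially the argument one finds in the cited reference, so nothing is lost by proving it directly. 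Two minor remarks. First, your bound $\norm{\operatorname{diag}(p)-pp^\top}_{\mathrm{op}}\le 1$ is not sharp: by a Popoviciu-type variance bound one gets $v^\top\bigl(\operatorname{diag}(p)-pp^\top\bigr)v \le \tfrac12\norm{v}_2^2$, with equality approached at $p=(\tfrac12,\tfrac12,0,\dots,0)$, $v \propto (1,-1,0,\dots,0)$, so softmax with temperature $\eta$ is in fact $\tfrac{\eta}{2}$-Lipschitz; your weaker constant of course suffices for the stated claim, which is all the paper uses. Second, in your closing alternative via log-sum-exp, the Hessian of $x\mapsto \eta^{-1}\log\sum_l e^{\eta x_l}$ is $\eta\bigl(\operatorname{diag}(p)-pp^\top\bigr)$, not $\operatorname{diag}(p)-pp^\top$; this factor-of-$\eta$ slip is confined to that side remark and does not affect the main proof.
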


\subsection{Definition of the network activity and coupled network activity} \label{def coupled}
Let us define more precisely the variables $X^{j,l}_{m,t}$ and $X^{j,l}_{o,t}$. For $l\in [L]$, let $\mathcal{F}_l$  be the $\sigma-$algebra generated by every event that happened until the end of the selection phase of layer $l$.

\noindent$\bullet$ For $i\in I, m\in [M]$, $l\in [L]$, the variables $(X^{i,l}_{m,t})_{1\leq t\leq T}$ are i.i.d and follow a Bernoulli distribution with parameter $p^i_o$ where $o_m^l =o$. Similarly, for $i\in I, o\in \Obj$, $l\in [L]$, the variables $(X^{i,l}_{o,t})_{1\leq t\leq T}$ are i.i.d and follow a Bernoulli distribution with parameter $p^i_o$.

\noindent $\bullet$ For $l\in [L]$, $j\in \hat{J}_1\cup\dots\cup\hat{J}_{l-1}\cup J_{l}$, we define conditionally to the past $\mathcal{F}_{l-1}$ the variables $(U_{m,t}^{j,l})_{1\leq m \leq M, 1\leq t\leq T}$ and $(U_{o,t}^{j,l})_{o\in \Obj, 1\leq t\leq T}$ which are i.i.d and follow a uniform distribution on $[0,1]$. Then, if $j\notin J_l$ we define its activity by $X^{j,l}_{m,t} := \mathbb{1}_{p^{j,l}_{m,t}(w^j_{M+1})\geq U^{j,l}_{m,t}}$, if $j\in J_l$ then $ 
    X^{j,l}_{m,t} := \mathbb{1}_{p^{j,l}_{m,t}(w^j_m)\geq U^{j,l}_{m,t}},
    $
and in any case
$
X^{j,l}_{o,t} := \mathbb{1}_{p^{j,l}_{o,t}(w^j_{M+1})\geq U^{j,l}_{o,t}}
$.

Let $l\in [L]$, $l'\leq l$ and $w_{\text{bis}}:=(w^j_{\text{bis}})_{j\in \hat{J}_1\cup\dots\cup\hat{J}_{l-1}\cup J_{l}}$ an  $\mathcal{F}_{l-1}$-measurable weight family. Then we define the coupled network activity with weights $w_{\text{bis}}$ as the variables $Z^{j,l}_{m,t} := \mathbb{1}_{p^{j,l}_{m,t}(w^j_{\text{bis}})\geq U^{j,l}_{m,t}}$ and $Z^{j,l}_{o,t} := \mathbb{1}_{p^{j,l}_{o,t}(w^j_{\text{bis}})\geq U^{j,l}_{o,t}}$.

\subsection{Proof of Theorem \ref{theorem lim hid}} \label{sec proof th hid}

We recall that for $l\in [L]$, $j=j_{S_1\cup S_2}\in \J_l$, we denote $\Bar{w}^j = \mathbb{1}_{j\in \{j_{S_1},j_{S_2}\}}$. Let $\alpha>0$. For $l\geq 1$ and $j=j_S$ where $S\subset I$, let 
 \[
        E_{\text{EWA}}^{j,l}(M,\abs{\J_{l-1}}) := \frac{1}{2}\max(1, \frac{\abs{\J_{l-1}}-2}{2}) \exp\Big(-\rho(S)2^{2-2^l}\sqrt{ 2M\ln(\abs{\J_{l-1}})}\Big).
        \]
For $A,B\geq 1$, let
\[
E_{\text{approx}}(T,A,B,\alpha) := 2 \sqrt{\frac{A\ln(A)}{T} \ln\Big(\frac{2AB}{\alpha}\Big)} .
\]
For $l\in [L]$ let
\[
E_w(l) := \max_{j\in \J_l} \sqrt{\abs{\J_{l-1}}} \norm{w^j_{M+1} - \Bar{w}^j}_2
\]
and 
\[
E_{\text{prec}}(M,\abs{\J_{l}}, l) := 6 \sqrt{2\ln(\abs{\J_l})M} \sum_{l'=1}^l E_w(l').
\]
Moreover, we denote $\J_l^* := \{j_{S_1\cup S_2} \text{ such that } j_{S_1}, j_{S_2}\in \J_l \text{ and } S_1\cap S_2 =\emptyset \}$.
Let us prove by induction the following proposition for every $l\in [L]$.

\begin{proposition}\label{prop induction}
    There exist constants $C_1^l$ and $C_2^l$ and events $(D^{l'})_{l'\leq l}$ and $(D^{l'}_{\text{sel}})_{l'\leq l}$, each of probability more than $1-\alpha$, such that if $T/M^{l-1}\geq C_1^l$ and $M\geq C_2^l$ then on $D^1\cap D^1_{\text{sel}}\cap\dots \cap D^{l-1}\cap D^l_{\text{sel}}$, for every $l'\leq l$, $\hat{J}_{l'}=\J_{l'}$ and for all $j\in \J_{l'}$, $j'\in \J_{l'-1}$,
    \begin{align*}
         \abs{w^{j'\to j}_{M+1} -\Bar{w}^{j'\to j}} \leq E_{\text{approx}}(T,\abs{\J_{l'-1}},\abs{\J_{l'-1}^{*}},\alpha) + &E^{j,l}_{\text{EWA}}(M,\abs{\J_{l'-1}}) \\
         &+ E_{\text{prec}}(M, \abs{\J_{l'-1}}, l'-1)
    \end{align*}
\end{proposition}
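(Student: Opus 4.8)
The plan is to prove Proposition~\ref{prop induction} by induction on $l$. The base case $l=1$ is the simplest: the presynaptic layer is the input layer $\J_0=I$, whose activity is nature-only dependent and involves no learned weights, so the propagation term $E_{\text{prec}}(M,\abs{\J_0},0)$ is an empty sum and only the concentration and EWA terms survive. For the inductive step I would assume the statement holds through level $l-1$; in particular $\hat J_{l'}=\J_{l'}$ for $l'\le l-1$ and the frozen weights of each earlier layer lie within $E_w(l')$ of the ideal weights $\bar w^{j}=\tfrac12\mathbb{1}_{\{j_{S_1},j_{S_2}\}}$, so those layers are nearly ideal with constants $\gamma_{l'}=2^{-2^{l'}+1}$.

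The heart of the argument rewrites the EWA update of neuron $j=j_{S_1\cup S_2}\in\J_l$ as a softmax. Under Assumption~\ref{assump EWA hid}, $w^{j'\to j}_{M+1}=\softmax(\eta^l G^{\cdot\to j}_M)_{j'}$ with $\eta^l G^{j'\to j}_M=\sqrt{8\ln(\abs{\J_{l-1}})}\,\sqrt M\,\bar g^{j'\to j}$, where $\bar g^{j'\to j}=\tfrac1M\sum_m\hat\rho^l_m(\{j_{S_1},j_{S_2},j'\})$. I would then bound $\abs{w^{j'\to j}_{M+1}-\bar w^{j'\to j}}$ by the three-term decomposition of the statement. $(i)$ Using the coupled network of Section~\ref{def coupled}, I replace the true presynaptic activities (produced with learned weights) by the coupled ones (produced with the ideal weights and the same uniforms $U$); since the positive part and the scalar product are Lipschitz, the conditional probability of a mismatch at a fixed index is at most an $\ell_1$-weight error $\le\sqrt{\abs{\J_{l'-1}}}\,\norm{w-\bar w}_2$, i.e.\ an $E_w(l')$ contribution from each earlier layer $l'$. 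This is a \emph{bias} that accumulates linearly over the $M$ objects, and passing it through the $\eta^l$-Lipschitz softmax (Proposition~\ref{prop iaf}) produces the factor $\eta^l M\sim\sqrt{\ln(\abs{\J_l})M}$, giving $E_{\text{prec}}(M,\abs{\J_{l-1}},l-1)$. $(ii)$ I concentrate the coupled cumulated gains around their conditional means with the Hoeffding bound of Proposition~\ref{prop prelim hoef} (the averaging being over the $T$ time steps, and Assumption~\ref{assump nb obj} turning the object-average into a nature-average); here the fluctuation is only $\sim\sqrt M$, so after the $\eta^l\sim\sqrt{\ln/M}$ softmax scaling the $\sqrt M$ cancels and leaves $E_{\text{approx}}(T,\abs{\J_{l-1}},\abs{\J_{l-1}^*},\alpha)$.

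The remaining deterministic average gains then feed into Proposition~\ref{prop prelim EWA}, and identifying its argmax set is where the modelling assumptions enter. In a nearly ideal layer $l-1$, $j_{S_1}$ and $j_{S_2}$ fire at time $t$ independently (distinct uniforms) each only when all their features are simultaneously on, so the conditional expected gain equals $\gamma_{l-1}^2\rho(S_1\cup S_2)$ when $j'\in\{j_{S_1},j_{S_2}\}$ (the coincidence $j'=j_{S_1}$ removes one $\gamma_{l-1}$ factor) and $\gamma_{l-1}^3\rho(S_1\cup S_2\cup S(j'))$ otherwise. Because any genuinely new $j'$ strictly enlarges the feature set, Assumption~\ref{assump rho} (decreasing correlation) makes the latter strictly smaller, so the argmax set is exactly $E=\{j_{S_1},j_{S_2}\}$ with $\abs E=2$, which explains the limit $\bar w^j=\tfrac12\mathbb{1}_{\{j_{S_1},j_{S_2}\}}$. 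Tracking $\gamma_{l-1}^2=2^{2-2^l}$ and $\gamma_{l-1}\le\tfrac12$, the gap of Proposition~\ref{prop prelim EWA} is bounded below by $\Delta\ge\rho(S)\,2^{2-2^l}\sqrt{2\ln(\abs{\J_{l-1}})}$, turning $\tfrac1{\abs E}\max(1,\tfrac{\abs{\J_{l-1}}-2}{2})\exp(-\sqrt M\Delta)$ into exactly $E^{j,l}_{\text{EWA}}(M,\abs{\J_{l-1}})$.

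Finally I would treat the selection phase to obtain $\hat J_l=\J_l$ and to define $D^l_{\text{sel}}$. Once the weights are pinned near their ideal values, the empirical frequency $\brac{X^{j,l}_{o,t}}_{t\in[T]}$ concentrates (again by Proposition~\ref{prop prelim hoef}) around $\gamma_l\rho_o(S)=2^{1-2^l}\rho_o(S)$; comparing with the threshold $s_l$ and using the strict separation furnished by Assumption~\ref{assump half sets} shows that $j_S$ exceeds $s_l$ for some $o$ if and only if $\rho_o(S)>2^{2^l-1}s_l$ for some $o$, which is exactly the defining condition of $\J_l$. Collecting all Hoeffding failure probabilities into the events $D^l$ and $D^l_{\text{sel}}$, each of probability at least $1-\alpha$ once $M\ge C_2^l$ and $T/M^{l-1}\ge C_1^l$, closes the induction. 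I expect step $(i)$ to be the main obstacle: one must set up the coupling so that the weight errors of \emph{all} previous layers propagate through the Bernoulli nonlinearity and the softmax in a controlled way, and verify that the new error $E_w(l)$ generated at level $l$ is dominated by the stated right-hand side rather than amplified by the $\sqrt{2\ln(\abs{\J_l})M}$ prefactor, so that the recursion does not diverge across the $L$ layers.
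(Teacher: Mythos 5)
Your proposal follows the same architecture as the paper's proof: induction on $l$ with the input layer as base case, the softmax rewrite of EWA combined with the Lipschitz bound of Proposition~\ref{prop iaf}, the three-term split into a Hoeffding term, a propagation term and an EWA-convergence term, the explicit unrolling of the ideal coupled gains (your values $\gamma_{l-1}^2\rho(S_1\cup S_2)$ for $j'\in\{j_{S_1},j_{S_2}\}$ versus $\gamma_{l-1}^3\rho(S_1\cup S_2\cup S(j'))$ for disjoint $j'$ coincide exactly with the paper's $2^{-(c_{l-1}^{j'\to j}+\dots+c_1^{j'\to j})}\rho(S\cup S')$ bookkeeping, and your gap $\Delta$ reproduces the exponent of $E_{\text{EWA}}^{j,l}$), the identification of the argmax via Assumption~\ref{assump rho}, and the selection phase via Proposition~\ref{prop prelim hoef}, Assumption~\ref{assump half sets} and a threshold placed strictly between the two populations.

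There is, however, one step that does not go through as you state it, and it sits exactly in the part (i) you flag as the main obstacle. You couple first and concentrate second: you replace the realized activities $X$ by the coupled activities $Z$ (generated with the frozen ideal weights and the same uniforms) and then apply Proposition~\ref{prop prelim hoef} to the \emph{coupled} cumulated gains. But your control of the coupling step is a bound on the conditional probability of a mismatch, i.e.\ a bound on $\mathbb{E}\big[\,\abs{G^{j'\to j}_M - G^{j'\to j}_{Z}}\mid\mathcal{F}_{l-1}\big]$ of order $M\sum_{l'<l}E_w(l')$; the realized difference $\abs{G^{j'\to j}_M - G^{j'\to j}_{Z}}$ is a random variable, and an in-expectation bound is not the pathwise bound you need on the high-probability event — calling it a ``bias that accumulates linearly'' conflates the expected number of mismatches with the realized one. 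The paper reverses the two operations precisely so that the expectation bound suffices: it concentrates the \emph{true} gains $G^{j'\to j}_M$ around their conditional means $\mathring{G}^{j'\to j}_M=\sum_m\mathbb{E}[X^{j_1,l}_{m,1}X^{j_2,l}_{m,1}X^{j',l}_{m,1}\mid\mathcal{F}_{l-1}]$ (legitimate because the triple products at distinct $(m,t)$ are conditionally independent, their dependence cones through the layers being disjoint), and then compares the two \emph{conditional expectations} $\mathring{G}^{j'\to j}_M$ and $\bar{G}^{j'\to j}_M=\sum_m\mathbb{E}[Z^{j_1,l}_{m,1}Z^{j_2,l}_{m,1}Z^{j',l}_{m,1}\mid\mathcal{F}_{l-1}]$, for which $\abs{\mathring{G}^{j'\to j}_M-\bar{G}^{j'\to j}_M}\leq 3M\sum_{l'<l}E_w(l')$ holds deterministically given $\mathcal{F}_{l-1}$, with no further probabilistic control needed. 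Your ordering can be repaired — the mismatch indicators are themselves conditionally independent across $(m,t)$ by the same disjoint-cone argument, so one more application of Proposition~\ref{prop prelim hoef} controls $\abs{G^{j'\to j}_M - G^{j'\to j}_{Z}}$ up to an additional fluctuation of the same order as $E_{\text{approx}}$ — but this extra step (or the paper's reordering) is needed for the induction to close. A second, smaller point: in the selection phase, the neurons $j_S\in J_l\setminus\J_l$ with $\rho(S)=0$ have limit weights that are \emph{uniform}, not $\tfrac12\mathbb{1}_{\{j_{S_1}, j_{S_2}\}}$, and their silence is not a consequence of weights being ``pinned near ideal values'' but of the first half of Assumption~\ref{assump half sets}: with uniform weights, at most half of the presynaptic neurons fire, so the potential stays below the bias $\nu=\tfrac12$ and the positive part vanishes.
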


\textbf{Case l=1}: For $i\in I$, $j=j_{\{i_1, i_2\}}\in J_1$, we remind that the cumulated gain of neuron $i$ w.r.t. neuron $j$ is $G^{i\to j}_M = \sum_{m=1}^M \brac{X^{i,1}_{m,t}X^{i_1,1}_{m,t}X^{i_2,1}_{m,t}}_{t\in[T]}$.
Let 
\[
\Bar{G}^{i\to j}_M := \sum_{m=1}^M \mathbb{P}(X^{i,1}_{m,t}=1, X^{i_1,1}_{m,t}=1, X^{i_2,1}_{m,t}=1).
\]
The variables $(X^{i,1}_{m,t}X^{i_1,1}_{m,t}X^{i_2,1}_{m,t})_{1\leq m \leq M, 1\leq t \leq T}$ are independent so according to Proposition \ref{prop prelim hoef} there exists an event $D^1$ of probability more than $1-\alpha$ on which for all $j\in J_1$, $i\in I$
\[
\abs{G^{i\to j}_M -\Bar{G}^{i\to j}_M  } \leq \sqrt{\frac{M}{2T}\ln\Big( \frac{2\abs{J_1}\abs{I}}{\alpha}\Big)}.
\]
Let $\Bar{w}^{i\to j}_{M+1} := \frac{\exp(\eta_1 \Bar{G}^{i\to j}_M) }{\sum_{i'\in I}\exp( \eta_1 \Bar{G}^{i'\to j}_M )}$. Then according to \eqref{prop iaf},
\[
\norm{w^{j}_{M+1} - \Bar{w}^{j}_{M+1} }_2 \leq \eta_1  \sqrt{\sum_{i'\in I} \abs{G^{i'\to j}_M -\Bar{G}^{i'\to j}_M }}.
\]
Hence on $D^1$, with $\eta_1 = \sqrt{\frac{8\ln(\abs{I})}{M}}$ we get
\begin{equation} \label{eq6}
    \norm{w^{j}_{M+1} - \Bar{w}^{j}_{M+1} }_2 \leq 2 \sqrt{\frac{\abs{I}\ln(\abs{I})}{T} \ln\Big(\frac{2\abs{I}\abs{J_1}}{\alpha}\Big)} = E_{\text{approx}}(T,\abs{I},\abs{J_1},\alpha).
\end{equation}
Besides, for $j=j_{\{i_1,i_2\}}\in J_1$ and $i\in I$, $\Bar{G}^{i\to j}_M = \sum_{m=1}^M \rho_{o^1_m}(\{i,i_1, i_2\})$. According to Assumption \ref{assump nb obj}, each nature of object is presented the same amount of times so
\[
\Bar{G}^{i\to j}_M = \frac{M}{\abs{\Obj}} \sum_{o\in \Obj} \rho_{o}(\{i,i_1, i_2\}) = M \rho(\{i,i_1, i_2\}).
\]
and $\eta_1 \Bar{G}^{i\to j}_M = \sqrt{8\ln(\abs{I})M}\rho(\{i,i_1, i_2\}$. We can distinguish between two cases.\\
1. If $j=j_{\{i_1,i_2\}}$ is such that $ \rho(\{i_1, i_2\}) =0$ then for all $i\in I,  \rho(\{i,i_1, i_2\})=0$. Hence according to Proposition \ref{prop prelim EWA}, we have  
\begin{equation} \label{eq2}
    \Bar{w}^{i\to j}_{M+1} =  \Bar{w}^{i\to j}
\end{equation}
 where $ \Bar{w}^{i\to j} := \frac{1}{\abs{I}}$. This definition of $  \Bar{w}^{i\to j}$ does not conflict with the one given at beginning of the proof because according to the definition of $\J_1$ given in section \ref{sec main th}, $j\notin \J_1$.
 \newline
 2. If $j=j_{\{i_1,i_2\}}$ is such that $ \rho(\{i_1, i_2\}) > 0$ then according to Assumption \ref{assump rho}, $\arg\max_{i\in I} \rho(\{i,i_1,i_2\}) = \{i_1,i_2\}.$ Hence according to Proposition \ref{prop prelim EWA}, we have
 \begin{equation}\label{eq3}
     \abs{\Bar{w}^{i\to j}_{M+1} -  \Bar{w}^{i\to j}} \leq E^{j,1}_{\text{EWA}}(M,\abs{I}) .
 \end{equation} 
Hence at the end of the learning phase of layer $1$, by combining \eqref{eq6} and \eqref{eq2} we have for all $j_S$ such that $ \rho(S) =0$:
\begin{equation} \label{eq7}
    \norm{w^{j}_{M+1} - \Bar{w}^{j} }_2 \leq  E_{\text{approx}}(T,\abs{I},\abs{J_1},\alpha)
\end{equation}
and by combining \eqref{eq6} and \eqref{eq3} we have for all $j_S$ such that $\rho(S)>0$:
 \begin{equation} \label{eq8}
     \norm{w^{j}_{M+1} - \Bar{w}^{j}}_2 \leq E_{\text{approx}}(T,\abs{I},\abs{J_1},\alpha) + \sqrt{\abs{I}}E^{j,1}_{\text{EWA}}(M,\abs{I}) .
 \end{equation}
Now let us study the selection phase. Let $\mathcal{F}_1^{\text{learn}}:= \mathcal{F}^1_{M,T}$ be the $\sigma$-algebra generated by every event that happened until the end of the learning phase of layer $1$. Let $j\in J_1$. The variables $(X^{j,1}_{o,t})_{o\in \Obj, 1\leq t \leq T}$ are independent knowing the $\sigma$-algebra $\mathcal{F}_1^{\text{learn}}$ since the weights $w^j_{M+1}$ are frozen and are $\mathcal{F}_1^{\text{learn}}$-measurable. Hence according to Proposition \ref{prop prelim hoef}, there exists a set $D^1_{\text{sel}}$ of probability more than $1-\alpha$ on which for every $j\in J_1$, $o\in \Obj$, 
\begin{equation*}
    \Abs{\brac{X^{j,1}_{o,t}}_{t\in[T]} - \mathbb{E}[X^{j,1}_{o,1} \mid \mathcal{F}_1^{\text{learn}}]} \leq \sqrt{\frac{1}{2T}\ln\Big( \frac{2\abs{J}\abs{\Obj}}{\alpha}\Big)}
\end{equation*}
since $\mathbb{E}[X^{j,1}_{o,t} \mid \mathcal{F}_1^{\text{learn}}]$ does not depend on $t$.
\newline

Let $(Z^{j,1}_{o,t})_{j\in J_1, o\in \Obj, 1\leq t \leq T}$ be the coupled network activity with weights $(\Bar{w}^j)_{j\in J_1}$ as defined in section \ref{def coupled}. Then for $j\in J_1$, $o\in \Obj$,
\begin{align*}
    \mathbb{E}[ \abs{X^{j,1}_{o,t} - Z^{j,1}_{o,t}} \mid \mathcal{F}_1^{\text{learn}} ] &\leq \abs{p^{j,1}_{o,t}(w^j_{M+1}) -p^{j,1}_{o,t}(\Bar{w}^j)} 
    \leq \abs{(w^j_{M+1} - \Bar{w}^j) \cdot X^{I,1}_{o,t}} .
\end{align*}
By Cauchy-Schwartz inequality and since the variables $X^{i,1}_{o,t}$ are bounded by $1$,
\begin{align*}
    \mathbb{E}[ \abs{X^{j,1}_{o,t} - Z^{j,1}_{o,t}} \mid \mathcal{F}_1^{\text{learn}} ] \leq \norm{w^j_{M+1} - \Bar{w}^j}_1
    \leq \sqrt{\abs{I}} \norm{w^j_{M+1} - \Bar{w}^j}_2 = E_w(1) .
\end{align*}
Therefore, on $D^1_{\text{sel}}$, for all $j\in J_1$ and $o\in \Obj$
\begin{equation*}
    \Abs{\brac{X^{j,1}_{o,t}}_{t\in[T]} - \mathbb{E}[Z^j_{o,1} \mid \mathcal{F}_1^{\text{learn}}]} \leq \sqrt{\frac{1}{2T}\ln\Big( \frac{2\abs{J}\abs{\Obj}}{\alpha}\Big)} + E_w(1).
\end{equation*}
Let $j=j_{\{i_1,i_2\}}\in J_1$. Let us compute $ \mathbb{E}[Z^j_{o,1} \mid \mathcal{F}_1^{\text{learn}}]$. If $\rho(\{i_1,i_2\}) >0$ then 
\begin{align*}
    \mathbb{E}[Z^j_{o,1} \mid \mathcal{F}_1^{\text{learn}}] &= \mathbb{E}\Big[\Big(-\frac{1}{2}+ \frac{1}{2}(X^{i_1,1}_{o,1} + X^{i_2,1}_{o,1})\Big)_+\Big] = \frac{1}{2} \rho_o(\{i_1,i_2\}).
\end{align*}
If  $\rho(\{i_1,i_2\}) = 0$ then
$
    \mathbb{E}[Z^j_{o,1} \mid \mathcal{F}_1^{\text{learn}}] = \mathbb{E}\Big[\Big(-\frac{1}{2}+ \frac{1}{\abs{I}}\sum_{i\in I} X^{i,1}_{o,1}\Big)_+\Big]  =0   = \frac{1}{2} \rho_o(\{i_1,i_2\})
$
according to Assumption \ref{assump half sets} because less than half the neurons of $I$ are active when presented with $o$ and because $\rho_o(\{i_1,i_2\})=0$. Therefore on $D^1_{sel}$
\begin{equation*}
    \Abs{\brac{X^{j,1}_{o,t}}_{t\in[T]} - \frac{1}{2}\rho_o(\{i_1,i_2\})} 
    \leq \sqrt{\frac{1}{2T}\ln\Big( \frac{2\abs{J}\abs{\Obj}}{\alpha}\Big)} + E_w(1).
\end{equation*}
Besides, according to Assumption \ref{assump half sets} and the definition of the sets $\J_l$, there exists $p_1$ such that if $j\in \J_1$ then there exists
 $o\in \mathcal{O}$ such that $ \rho_o(\{i_1,i_2\})\geq p_1$ and if $j\notin \J_1$ then for all $o\in \mathcal{O}$, 
  $\rho_o(\{i_1,i_2\})< p_1$. Let $q_1:= \max_{j_S\in J_1\setminus \J_1, o\in \Obj} \rho_o(S) $. Let us choose 
  the threshold $s_1 := \frac{1}{2}(\frac{1}{2}p_1 + \frac{1}{2}q_1)$. Then, on $D^1\cap D^1_{\text{sel}}$,
   \[
   \Abs{\brac{X^{j,1}_{o,t}}_{t\in[T]} - \frac{1}{2}\rho_o(\{i_1,i_2\})} = O\Big(T^{-1/2} + e^{-c\sqrt{M}}\Big)\]
   so there exist 
   constants $C_1^1$ and $C_2^1$ independent of $M,N$ and $T$ such that for $T\geq C^1_1$ and $M\geq C^1_2$ we have
   for all $j_S\in \J_1$, $o\in \Obj$
\begin{equation*}
    \Abs{\brac{X^{j,1}_{o,t}}_{t\in[T]} - \frac{1}{2}\rho_o(S)} 
    < \frac{1}{2}\Big(\frac{1}{2}p_1 - \frac{1}{2}q_1\Big).
\end{equation*}
so if $j\in \J_1$ then
    $\brac{X^{j,1}_{o,t}}_{t\in[T]} \geq s_1$
and otherwise $   \brac{X^{j,1}_{o,t}}_{t\in[T]} < s_1.$
Hence with this choice of threshold, on $D^1\cap D^1_{\text{sel}}$ we have $\hat{J}_1 = \J_1$ and Proposition \ref{prop induction} is true for rank $1$.
\newline
\textbf{Case $l >1$ :} Suppose Proposition \ref{prop induction} is true for rank $l-1$. Let $D^1,D^1_{\text{sel}},\dots,$ $D^{l-1},$ $D^{l-1}_{\text{sel}}$ the events and $C_1^{l-1}, C_2^{l-1}$ the constants given by the proposition at rank $l-1$. Suppose $\frac{T}{M^{l-2}}\geq C_1^{l-1}$ and $M\geq C_2^{l-1}$. Then the conclusions of Proposition \ref{prop induction} hold for rank $l-1$. 
\newline

       We remind that for $j=j_{S_1\cup S_2}\in J_l$ with $j_1 = j_{S_1}, j_2 = j_{S_2}\in \hat{J}_{l-1}$ and $j'\in \hat{J}_{l-1}$, the cumulated gain of neuron $j'$ w.r.t neuron $j$ is $G^{j'\to j}_M = \sum_{m=1}^M \brac{X^{j_1,l}_{m,t}X^{j_2,l}_{m,t}X^{j',l}_{m,t}}_{t\in[T]}$.
       Let
       \[\mathring{G}^{j'\to j}_M :=
        \sum_{m=1}^M \mathbb{E}[ X^{j_1,l}_{m,1}X^{j_2,l}_{m,1}X^{j',l}_{m,1} \mid \mathcal{F}_{l-1}].\]
       The variables $(X^{j_1,l}_{m,t}X^{j_2,l}_{m,t}X^{j',l}_{m,t})_{1\leq m \leq M, 1\leq t \leq T}$ are independent
       knowing $\mathcal{F}_{l-1}$. 
        According to Proposition \ref{prop prelim hoef}, there exists a set $D^l$ of probability greater than $1-\alpha$
        on which for all $j\in \J_l, j'\in \hat{J}_l$,
        \begin{equation} \label{eq10}
            \abs{G^{j'\to j}_M - \mathring{G}^{j'\to j}_M} \leq 
            \sqrt{\frac{M}{2T}\ln\Big(\frac{2\abs{\hat{J}_{l-1}}\abs{J_l}}{\alpha}\Big)}
        \end{equation}
since $\mathbb{E}[ X^{j_1,l}_{m,t}X^{j_2,l}_{m,t}X^{j',l}_{m,t} \mid \mathcal{F}_{l-1}]$ does not depend on $t$.

        Let $(Z^{j,l}_{m,t})_{j\in \hat{J}_1\cup\hat{J}_{l-1}, 1\leq m \leq M, 1\leq t \leq T}$, 
        $(Z^{j,l}_{o,t})_{j\in \hat{J}_{l-1}, o\in \Obj, 1\leq t \leq T}$ be the coupled network activity with arbitrary $\mathcal{F}_{l-1}$-measurable
         weights $(w^j_\star)_{j\in \hat{J}_1\cup \dots \cup \hat{J}_{l-1}}$ as defined in section \ref{def coupled}. For $j=j_{S_1\cup S_2}\in J_l$ with $j_1 =j_{S_1}, j_2 = j_{S_2}\in \hat{J}_{l-1}$, $j'\in \hat{J}_{l-1}$, let 
        $\Bar{G}^{j'\to j}_M := \sum_{m=1}^M 
        \mathbb{E}[Z^{j_1,l}_{m,1}Z^{j_2,l}_{m,1}Z^{j',l}_{m,1} \mid \mathcal{F}_{l-1}]. 
        $
        Then since the variables are bounded by $1$, we have
        \begin{multline} \label{eq16}
            \abs{\mathring{G}^{j'\to j}_M - \Bar{G}^{j'\to j}_M} \leq \\
            \sum_{m=1}^M\Big(\mathbb{E}[\abs{X^{j',l}_{m,t} - Z^{j',l}_{m,t}} \mid \mathcal{F}_{l-1}] +
            \mathbb{E}[\abs{X^{j_1,l}_{m,t} - Z^{j_1,l}_{m,t}} \mid \mathcal{F}_{l-1}] +  \mathbb{E}[\abs{X^{j_2,l}_{m,t} - Z^{j_2,l}_{m,t}} \mid \mathcal{F}_{l-1}]\Big).
        \end{multline}
        Let $j''\in \{j',j_1,j_2\}$. Note that $j''\in \J_{l-1}$. We have
\begin{align*}
   & \mathbb{E}[\abs{X^{j'',l}_{m,t} - Z^{j'',l}_{m,t}} \mid \mathcal{F}_{l-1}]
   \leq \mathbb{E}[\abs{p^{j'',l}_{m,t}(w^{j''}_{M+1}) - p^{j'',l}_{m,t}(\Bar{w}^{j''})} \mid \mathcal{F}_{l-1}] \\
    & \leq \mathbb{E}[\abs{ w^{j''}_{M+1} \cdot X^{\J_{l-1},l}_{m,t-1}  -  w^{j''}_{\star} \cdot Z^{\J_{l-1},l}_{m,t-1} } \mid \mathcal{F}_{l-1}] \\
    & \leq \mathbb{E}[\abs{ w^{j''}_{M+1} \cdot X^{\J_{l-1},l}_{m,t-1} -  w^{j''}_{\star} \cdot X^{\J_{l-1},l}_{m,t-1}} \mid \mathcal{F}_{l-1} ] + 
    \mathbb{E}[\abs{ w^{j''}_{\star} \cdot X^{\J_{l-1},l}_{m,t-1} -  w^{j''}_{\star} \cdot Z^{\J_{l-1},l}_{m,t-1}} \mid \mathcal{F}_{l-1} ] \\
    &\leq \norm{w^{j''}_{M+1} -  w^{j''}_{\star}}_1 + \max_{a\in \J_{l-2}} \mathbb{E}[\abs{X^{a,l}_{m,t} - Z^{a,l}_{m,t}} \mid \mathcal{F}_{l-1}] 
\end{align*}
    because the variables $X^{\J_{l-1},l}_{m,t-1} = (X^{a,l}_{m,t-1})_{a\in \J_{l-1}}$ are bounded by $1$ and the weights $w^{j''}_{\star}$ and $w^{j''}_{M+1}$ are $\mathcal{F}_{l-1}$-measurable and bounded by $1$. By iteration we get
\begin{equation}\label{eq17}
      \mathbb{E}[\abs{X^{j'',l}_{m,t} - Z^{j'',l}_{m,t}} \mid \mathcal{F}_{l-1}]  \leq \sum_{l'=1}^{l-1} \max_{a\in \hat{J}_{l'}} \norm{w^{a}_{M+1} -  w^{a}_{\star}}_1 
\end{equation}
    Let us now work on the event $D^1\cap D^1_{\text{sel}}\cap\dots \cap D^{l-1}\cap D^{l-1}_{\text{sel}}\cap D^l$. Then for every $l'\leq l-1$, $\hat{J}_{l'} = \J_{l'}$. For every $a\in \J_{l'}$, let us choose $w^a_{\star} = \Bar{w}^a$. Then
    \begin{equation} \label{eq14} 
         \mathbb{E}[\abs{X^{j'',l}_{m,t} - Z^{j'',l}_{m,t}} \mid \mathcal{F}_{l-1}]  \leq \sum_{l'=1}^{l-1} E_w(l')
    \end{equation}
 and we can control the $E_w(l')$. Then by combining \eqref{eq16} and \eqref{eq14} we get
\begin{equation} \label{eq9}
     \abs{\mathring{G}^{j'\to j}_M - \Bar{G}^{j'\to j}_M} \leq 3M\sum_{l'=1}^{l-1} E_w(l').
\end{equation}
For $j\in J_l, j'\in \J_{l-1}$, let $\Bar{w}^{j'\to j}_{M+1} := \frac{\exp(\eta^j \Bar{G}^{j'\to j}_M)}{\sum_{j''\in \J_{l-1}} \exp(\eta^j\Bar{G}^{j''\to j}_M)}$. According to Prop. \ref{prop iaf},
\begin{align*}
    &\norm{w^{j}_{M+1} - \Bar{w}^{j}_{M+1}}_2
    \leq \eta^j \norm{( \Bar{G}^{j'\to j}_M )_{j'\in \J_{l_1}} - ( G^{j'\to j}_M )_{j'\in \J_{l_1}}}_2 \\
    &\leq  \eta^j \Big(\norm{( \mathring{G}^{j'\to j}_M )_{j'\in \J_{l_1}} - ( G^{j'\to j}_M )_{j'\in \J_{l_1}}}_2 + \norm{( \Bar{G}^{j'\to j}_M )_{j'\in \J_{l_1}} - ( \mathring{G}^{j'\to j}_M )_{j'\in \J_{l_1}}}_2 \Big)
\end{align*}
and by combining \eqref{eq10} and \eqref{eq9}, with  $\eta^j = \sqrt{\frac{8\ln(\abs{\J_{l-1}})}{M}}$ we get
\begin{equation} \label{eq12}
    \norm{w^{j}_{M+1} -  \Bar{w}^{j}_{M+1}}_2 \leq E_{\text{prec}}(M, \abs{\J_{l-1}}, l-1) +  E_{\text{approx}}(T,\abs{\J_{l-1}},\abs{\J_{l-1}^{*}},\alpha)
\end{equation}
Let us study $\Bar{G}^{j'\to j}_M$ where $j=j_{S_1\cup S_2}$. We denote $S=S_1\cup S_2$, and for a set $S_e$ with $e$ an index, we denote $j_e$ its associated neuron. For $j''=j_{S_a\cup S_b}\in \{j',j_1,j_2\}$, the conditional spiking 
probability of $Z^{j'',l}_{m,t}$ is $\Big(-\frac{1}{2} + \frac{1}{2}(Z^{j_a,l}_{m,t-1} + Z^{j_b,l}_{m,t-1})\Big)_+$. Then $j''$ spikes with probability $\frac{1}{2}$ if and only if neurons $j_a$ and $j_b$ 
spiked before. Let $S_1 = S_{a_1}\cup S_{a_2}$, $S_2 = S_{b_1}\cup S_{b_2}$, $j'=j_{S'}$ with $S' = S_{d_1}\cup S_{d_2}$. Let  $c_{l'}$ is the number of distinct variables of layer $l'$ which intervene in the tensors $j_1$, $j_2$ and $j'$.
\begingroup
\allowdisplaybreaks
\begin{align*}
    \Bar{G}^{j'\to j}_M &=  \sum_{m=1}^M \mathbb{E}[Z^{j_1,l}_{m,t}Z^{j_2,l}_{m,t}Z^{j',l}_{m,t} \mid \mathcal{F}_{l-1}] \\
    &=  \sum_{m=1}^M \mathbb{E}[ \frac{1}{2^{c^{j'\to j}_{l-1}}} Z^{j_{a_1},l}_{m,t} Z^{j_{a_2},l}_{m,t} Z^{j_{b_1},l}_{m,t} Z^{j_{b_2},l}_{m,t} Z^{j_{d_1},l}_{m,t}Z^{j_{d_2},l}_{m,t} \mid \mathcal{F}_{l-1}] \\
& \dots \\
& =  \sum_{m=1}^M 2^{-(c_{l-1}^{j'\to j} + c_{l-2}^{j'\to j} + \dots + c_{1}^{j'\to j})} \mathbb{E}[\prod_{i\in S'\cup S_1\cup S_2} X^{i,l}_{m,t} \mid \mathcal{F}_{l-1} ] \\
& = \sum_{m=1}^M 2^{-(c_{l-1}^{j'\to j} + c_{l-2}^{j'\to j} + \dots + c_{1}^{j'\to j})}\prod_{i\in  S'\cup S_1\cup S_2}  \mathbb{E}[X^{i,l}_{m,t} \mid \mathcal{F}_{l-1} ] \\
& = \sum_{m=1}^M 2^{-(c_{l-1}^{j'\to j} + c_{l-2}^{j'\to j} + \dots + c_{1}^{j'\to j})}\prod_{i\in  S'\cup S_1\cup S_2}  \mathbb{E}[X^{i,l}_{m,t}] \\
& = \sum_{m=1}^M 2^{-(c_{l-1}^{j'\to j} + c_{l-2}^{j'\to j} + \dots + c_{1}^{j'\to j})} \rho_m(S_1\cup S_2 \cup S') 
\end{align*}
\endgroup
because the variables $X^{i,l}_{m,t}$ are mutually independent knowing $\mathcal{F}_{l-1}$ and are independent of $\mathcal{F}_{l-1}$. Therefore, according to Assumption \ref{assump nb obj},
\begin{equation*}
    \Bar{G}^{j'\to j}_M = 2^{-(c_{l-1}^{j'\to j} + c_{l-2}^{j'\to j} + \dots + c_{1}^{j'\to j})}\rho(S' \cup S)M. 
\end{equation*}
There are two cases. If $\rho(S)=0$ then for all $j'\in \J_{l-1}$, $\Bar{G}^{j'\to j}_M=0$. Let $\Bar{w}^{j'\to j} := \frac{1}{\abs{\J_{l-1}}}$. This definition does not conflict with the one recalled at the beginning of the proof because $j\notin \J_l$ according to the definition of $\J_l$ given in section \ref{sec main th}.

Then for all $j'\in \J_{l-1}$, 
\begin{equation} \label{eq11}
    \Bar{w}^{j'\to j}_{M+1} =\Bar{w}^{j'\to j} .
\end{equation}
If $\rho(S) >0$ then according to the definition of $J_l$, $S_1\cap S_2 = \emptyset$ so for $j'\in \{j_1,j_2\}$, $c^{j'\to j}_{l'} = 2^{l - l'}$ for any $l'\leq l-1$. So $ \Bar{G}^{j'\to j}_M = 2^{-2^l + 2}\rho(S)M $.  If $j'\notin \{j_1, j_2\}$ then $c_{l-1}^{j'\to j} = 3$, otherwise $c_{l-1}^{j'\to j} = 2$. Besides, it is clear that if $j'\in \{j_1,j_2\}$ and $j''\notin \{j_1,j_2\}$ then for every $l'\leq l-2$, $c_{l'}^{j'\to j}  \leq c_{l'}^{j''\to j} $. In addition, $\rho(S) \geq \rho(S' \cup S)$ for any $j'\in \J_{l-1}$. 
Therefore for $j'\neq j_1, j_2$, $\Bar{G}^{j'\to j}_M \leq 2^{-2^l+1}\rho(S)M $. Hence 
\[\arg\max_{j'\in \J_{l-1}} 2^{-(c_{l-1}^{j'\to j} + c_{l-2}^{j'\to j} + \dots + c_{1}^{j'\to j})}\rho(S' \cup S)M = \{j_1,j_2\}\]
and 
\begin{align*}
    \max_{j'\in \J_{l-1}}& 2^{-(c_{l-1}^{j'\to j}   + c_{l-2}^{j'\to j} + \dots + c_{1}^{j'\to j})} \rho(S' \cup S) \\
    &- \max_{j'\neq j_1, j_2}2^{-(c_{l-1}^{j'\to j} + c_{l-2}^{j'\to j} + \dots + c_{1}^{j'\to j})}\rho(S' \cup S) \geq 2^{-2^l+1}\rho(S).
\end{align*}
Let $\Bar{w}^{j'\to j} := \mathbb{1}_{j'\in \{j_1, j_2\}}$. According to Proposition \ref{prop prelim EWA}, with $\eta^j = \sqrt{\frac{8\ln(\abs{\J_{l-1}})}{M}}$ we have for all $j'\in \J_{l-1}$
\begin{align} \label{eq13}
    \abs{\Bar{w}^{j'\to j}_{M+1} - \Bar{w}^{j'\to j}} &\leq  E_{\text{EWA}}^{j,l}(M,\abs{\J_{l-1}}).
\end{align}
Therefore, for $j=j_S\in J_l$,
\begin{itemize}
    \item if $\rho(S) = 0$ then by combining \eqref{eq11} and \eqref{eq12} we get
    \begin{equation*}
 \norm{w^{j}_{M+1} -  \Bar{w}^j}_2 \leq E_{\text{prec}}(M, \abs{\J_{l-1}}, l-1) +  E_{\text{approx}}(T,\abs{\J_{l-1}},\abs{\J_{l-1}^{*}},\alpha).
\end{equation*}
    \item if  $\rho(S) >0 $ then by combining \eqref{eq12} and \eqref{eq13} we get
    \begin{align*}
         \norm{w^{j}_{M+1} -  \Bar{w}^j}_2 \leq E_{\text{prec}}(M, \abs{\J_{l-1}}, l-1) + & E_{\text{approx}}(T,\abs{\J_{l-1}},\abs{\J_{l-1}^{*}},\alpha) +  \\
         &\sqrt{\abs{\hat{J}_{l-1}}} E_{\text{EWA}}^{j,l}(M,\abs{\J_{l-1}}).
    \end{align*}
\end{itemize}
Now let us study the selection phase. Let $\mathcal{F}_l^{\text{learn}} := \mathcal{F}^l_{M,T}$ be the $\sigma$-algebra generated by every event that happened until the end of the learning phase of layer $l$. Let $j\in J_l$. The variables $(X^{j,l}_{o,t})_{o\in \Obj, 1\leq t \leq T}$ are independent knowing the $\sigma$-algebra $\mathcal{F}_l^{\text{learn}}$ since the weights $w^j_{M+1}$ are frozen and are $\mathcal{F}_l^{\text{learn}}$-measurable. Hence according to Proposition \ref{prop prelim hoef}, there exists a set $D^l_{\text{sel}}$ of probability more than $1-\alpha$ on which for every $j\in J_l$, $o\in \Obj$,
\begin{equation*}
    \Abs{\brac{X^{j,l}_{o,t}}_{t\in[T]} - \mathbb{E}[X^{j,l}_{o,1} \mid \mathcal{F}_l^{\text{learn}}]} \leq \sqrt{\frac{1}{2T}\ln\Big( \frac{2\abs{J_l}\abs{\Obj}}{\alpha}\Big)}
\end{equation*}
since $\mathbb{E}[X^{j,l}_{o,t} \mid \mathcal{F}_l^{\text{learn}}]$ does not depend on $t$. 

Let $(Z^{j,l}_{o,t})_{j\in \hat{J}_1\cup\dots\cup\hat{J}_{l-1}\cup J_l, o\in \Obj, 1\leq t \leq T}$ be the coupled network activity with any weights $(w^j_\star)$ which are $\mathcal{F}_l^{\text{learn}}$-measurable as defined in section \ref{def coupled}. Then for $j = j_S\in J_l$, $o\in \Obj$, similarly as in \eqref{eq17} we get
\begin{align*}
     \mathbb{E}[\abs{X^{j,l}_{o,1} - Z^{j,l}_{o,1}}\mid \mathcal{F}_l^{\text{learn}}] &\leq \sum_{l'=1}^{l-1} \max_{a\in \hat{J}_{l'}} \norm{w^a_{M+1} - w^a_{\star}}_1 +  \max_{a\in J_{l}} \norm{w^a_{M+1} - w^a_{\star}}_1 .
\end{align*}
Let us work on $D^1\cap D^1_{\text{sel}}\cap \dots \cap D^l \cap D^l_{\text{sel}}$. Let $w^j_{\star}:= \Bar{w}^j$ for $j\in \J_1\cup\dots\cup\J_{l-1}\cup J_l$. Then we by Cauchy-Schwartz inequality we have
\begin{align*}
    \Abs{\brac{X^{j,l}_{o,t}}_{t\in[T]} - \mathbb{E}[Z^{j,l}_{o,1} \mid \mathcal{F}_l^{\text{learn}}]} \leq \sqrt{\frac{1}{2T}\ln\Big( \frac{2\abs{J_l}\abs{\Obj}}{\alpha}\Big)} + & \sum_{l'=1}^l E_w(l') +  \\
    &\sqrt{\abs{\hat{J}_{l'-1}}} \max_{a\in J_{l}} \norm{w^a_{M+1} - \Bar{w}^a}_2
\end{align*}
where we control every term. Let us study $\mathbb{E}[Z^{j,l}_{o,1} \mid \mathcal{F}_l^{\text{learn}}]$. If $\rho(S)>0$ then similarly as in the computation of $\Bar{G}^{j'\to j}_M$ we get that $\mathbb{E}[Z^{j,l}_{o,1} \mid \mathcal{F}_l^{\text{learn}}] = 2^{-2^{l}+1}\rho(S)$.
If $\rho(S)=0$ then
\begin{align*}
     \mathbb{E}[Z^{j,l}_{o,t} \mid \mathcal{F}_l^{\text{learn}}] =  \mathbb{E}\Big[\Big(-\frac{1}{2}+ \frac{1}{\abs{\J_{l-1}}} \sum_{j'\in \J_{l-1}} Z^{j',l}_{o,t-1}\Big)_+ \mid \mathcal{F}_l^{\text{learn}}\Big]. 
\end{align*}
The computation in the case $\rho(S)>0$ also holds for neurons of layer $l-1$, which are all in this case because $\hat{J}_{l-1}=\J_{l-1}$. Therefore, according to Assumption \ref{assump half sets}, at most half the neurons of layer $l-1$ are active when presented with $o$. Hence
\begin{equation*}
    \mathbb{E}[Z^{j,l}_{o,1} \mid \mathcal{F}_l^{\text{learn}}] = 0 = 2^{-2^{l}+1}\rho(S).
\end{equation*}
Therefore for all $j\in J_l$, $o\in \Obj$,
\begin{align*}
     &\Abs{\brac{X^{j,l}_{o,t}}_{t\in[T]} - 2^{-2^{l}+1}\rho(S)} \\
     &\leq  \sqrt{\frac{1}{2T}\ln\Big( \frac{2\abs{J_l}\abs{\Obj}}{\alpha}\Big)}+ \sum_{l'=1}^l E_w(l')+  \sqrt{\abs{\hat{J}_{l'-1}}}\max_{a\in J_{l}} \norm{w^a_{M+1} - \Bar{w}^a}_2 := E(l,M,T)
\end{align*}
Besides, by definition of $\J_l$, if $j\in \J_l$ then there exists $o\in \Obj$ such that $\rho_o(S)\geq 2^{2^l - 1} s_l$ and if $j\in J_l\setminus\J_l $ then for all $o\in \Obj$, $\rho_o(S)< 2^{2^l - 1} s_l$. Let $q_l:=2^{-2^l + 1} \max_{j_S\in J_l\setminus \J_l}\rho_o(S)$. Then $q_l < s_l$. Since $E(l,M,T) = O\Big((\frac{M^{l-1}}{T})^{1/2} + e^{-C\sqrt{M}}\Big)$ then for $\frac{T}{M^{l-1}}$ and $M$ large enough, 
\begin{equation} \label{eq15}
   E(l,M,T) < s_l - q_l.
\end{equation}
Hence there exists constants $C_1^l$ and $C_2^l$ such that for $\frac{T}{M^{l-1}}\geq C_1^l$ and $M\geq C_2^l$, \eqref{eq15} is true. Besides, $\frac{T}{M^{l-2}} \geq \frac{T}{M^{l-1}}$ so let us take $C_1^l\geq C_1^{l-1}$ and $C_2^l\geq C_2^{l-1}$ so that we still have $\frac{T}{M^{l-2}}\geq C_1^{l-1}$ and $M\geq C_2^{l-1}$. Under this condition and this choice of threshold, if $j\in \J_l$ then $\brac{X^{j,l}_{o,t}}_{t\in[T]} \geq s_l$
and otherwise
$\brac{X^{j,l}_{o,t}}_{t\in[T]} < s_l$
so $\hat{J}_l = \J_l$ and Proposition \ref{prop induction} is true for rank $l$.
\newline

Now that Proposition \ref{prop induction} is proved for every $l\in [L]$, we can directly deduce Theorem \ref{theorem lim hid} by bounding $\abs{J_l}$ by $\abs{\J_{l-1}}^2$.

\subsection{Proof of Corollary \ref{coro ideal hid}}

Let $l\in [L]$, $j\in \J_l$, $o\in \Obj$. With the same calculation as in the proof of Theorem \ref{theorem lim hid} (see section \ref{sec proof th hid}), we have $\mathbb{E}[p^j_o(\Bar{w}^j)] = 2^{-2^l+1}\rho(S)$. Besides, for $T$, $M$ and $T/M^{L-1}$ tending to infinity, the error term of Theorem \ref{theorem lim hid} tends to zero so for $T$, $M$ and $T/M^{L-1}$ large enough, for every $l\in [L]$, the set of selected neurons is always $\J_{l}$ and the limit layer is an ideal hidden layer with constant $ 2^{-2^l+1}$.

\subsection{Proof of Corollary \ref{coro average}} \label{sec proof coro}

Suppose Assumption \ref{assumption regret bound out} and the assumptions of Theorem \ref{theorem lim hid} hold. Let $\alpha>0$. Suppose $T/M^{L-1}\geq C_1$ and $M\geq C_2$ where $C_1$ and $C_2$ are the constants defined in Theorem \ref{theorem lim hid} and  $\Q_L$ is non-empty. Then the conclusions of Theorem \ref{theorem lim hid} hold. Let us use the notations introduced in the proof of Theorem \ref{theorem lim hid}.

Let $k\in K, j\in \hat{J}_L$. Let us define $S^{j\to k}_N := \sum_{m, o^{L+1}_m\in k} \brac{X^{j, L+1}_{m,t}}_{t\in[T]}$
and $\mathring{S}^{j\to k}_N := \sum_{m, o^{L+1}_m\in k} \mathbb{E}[X^{j,L+1}_{m,1}\mid \mathcal{F}_L]$.
The variables $(X^{j, L+1}_{m,t})_{1\leq t \leq T, m \text{ s.t. } o_m^{L+1}\in k}$ are bounded by $1$ and independent knowing $ \mathcal{F}_L$. Hence according to Proposition \ref{prop prelim hoef}, there exists an event $D^{L+1}$ of probability more than $1-\alpha$ such that on $D^{L+1}$, for every $k\in K$, $j\in \hat{J}_L$ we have
\begin{equation*}
    \abs{S^{j\to k}_N - \mathring{S}^{j\to k}_N} \leq \sqrt{\frac{N^k}{2T}\ln\Big(\frac{2\abs{\hat{J}_L}\abs{K}}{\alpha}\Big)}.
\end{equation*}
Let $(Z^{j,L+1}_{m,t})_{j\in \hat{J}_1\cup\dots\cup\hat{J}_L, 1\leq m\leq M, 1\leq t \leq T}$ be the coupled network activity with arbitrary weights $(w^j_\star)_{j\in \hat{J}_1\cup\dots\cup\hat{J}_L}$ which are $\mathcal{F}_L$-measurable as defined in section \ref{def coupled}. For $k\in K, j\in \hat{J}_L$ let
\[
\Bar{S}^{j\to k}_N := \sum_{m, o_m^{L+1}\in k} \mathbb{E}[Z^{j,L+1}_{m,1} \mid \mathcal{F}_L].
\]
Then,
\begin{align*}
   \abs{\Bar{S}^{j\to k}_N  - \mathring{S}^{j\to k}_N} &\leq \sum_{m, o_m^{L+1}\in k} \mathbb{E}[\abs{X^{j,L+1}_{m,1} - Z^{j,L+1}_{m,1}}\mid \mathcal{F}_L ]
\end{align*}
and similarly as in the proof of Theorem \ref{theorem lim hid} (see section \ref{sec proof th hid}), we have
\begin{align*}
    \abs{\Bar{S}^{j\to k}_N  - \mathring{S}^{j\to k}_N} &\leq \sum_{m, o_m^{L+1}\in k} \sum_{l=1}^{L} \sqrt{\abs{\hat{J}_l}} \max_{a\in \hat{J}_{l}} \norm{w^a_{M+1} - w^a_{\star}}_2 \\
    & = N^k \sum_{l=1}^L \sqrt{\abs{\hat{J}_l}}  \max_{a\in \hat{J}_{l}} \norm{w^a_{M+1} - w^a_{\star}}_2.
\end{align*}
Let us work on $D := D^1\cap D^1_{\text{sel}}\cap \dots \cap D^L\cap D^L_{\text{sel}}\cap D^{L+1}$ where for $l\in [L]$, $D^l$ and $D^l_{\text{sel}}$ are the events given in the proof of Theorem \ref{theorem lim hid} (see section \ref{sec proof th hid}). Then $D$ is of probability greater than $1-2(L+1)\alpha$. Choose $w_\star = \Bar{w}$. Then, for $j\in \J_L$, $k\in K$,
\begin{equation} \label{eq18}
    \abs{S^{j\to k}_N - \Bar{S}^{j\to k}_N}\leq  \sqrt{\frac{N^k}{2T}\ln\Big(\frac{2\abs{\J_L}\abs{K}}{\alpha}\Big)} + N^k  \sum_{l=1}^L E_w(l).
\end{equation}
Here, according to Assumption \ref{assump nb obj}, every nature of object is presented the same amount of time to the network during the learning phase of the output layer. Hence Assumption \ref{assump proportions} holds for $\xi = \frac{1}{\abs{\Obj}}$. Therefore, according to Proposition \ref{prop reg class disc},
  \[
\Disc_N(w^k_{[N]}, X_{[N],[T]}^{\hat{J}_L,L+1} ) \geq \max_{q^K\in (\mathcal{P}_{\J_{L}})^{\abs{K}}}  \Disc_N(q^K, X_{[N],[T]}^{\hat{J}_L,L+1}) - \frac{C\abs{\Obj}\abs{K}}{\abs{K}-1}\sqrt{\frac{\ln(\abs{\hat{J}_L})}{N}}.
\]
Let $q^K\in (\mathcal{P}_{\J_{L}})^{\abs{K}}$ be a feasible weight family.
\begin{align*}
     \Disc_N(q^K, X_{[N],[T]}^{\hat{J}_L,L+1}) &= \Brac{\hat{p}^k_m(q^k) - \hat{p}^{k'}_m(q^{k'}) }_{\begin{subarray}{l}
       k\in K \\ k'\neq k \\  m, o^{L+1}_m\in k 
    \end{subarray}} \\
    & = \Brac{ (q^k - q^{k'}) \cdot \Big(\frac{1}{N^k} S^{j\to k}_N\Big)_{j\in \J_L} }_{\begin{subarray}{l}
       k\in K \\ k'\neq k 
    \end{subarray} }.
\end{align*}
Besides, similarly as in the proof of Theorem \ref{theorem lim hid}, for $j=j_S$ we have $\mathbb{E}[X^{j,L+1}_{m,1}\mid \mathcal{F}_L] = 2^{-2^L +1}\rho_{o_m^{L+1}}(S)$
so
\begin{align*}
    \Brac{ (q^k - q^{k'}) \cdot \Big(\frac{1}{N^k} \Bar{S}^{j\to k}_N\Big)_{j\in \J_L} }_{\begin{subarray}{l}
       k\in K \\ k'\neq k 
    \end{subarray} } &=   2^{-2^L +1}\Brac{ (q^k - q^{k'}) \cdot \rho_o(S) }_{\begin{subarray}{l}
       k\in K \\ k'\neq k \\ o\in k
    \end{subarray} } \\
    &= \gamma_L\Disc^{\id}(q^K, \J_L)
\end{align*}
with $\gamma_L = 2^{-2^L +1}$. Therefore, by lower bounding $N^k$ by $\frac{N}{\abs{\Obj}}$ for every $k$ we get
\[
\Disc_N(q^K, X_{[N],[T]}^{\hat{J}_L,L+1}) \geq \gamma_L\Disc^{\id}(q^K, \J_L) - 2 \sqrt{\frac{\abs{\Obj}}{2TN}\ln\Big(\frac{2\abs{\J_L}\abs{K}}{\alpha}\Big)} -2 \sum_{l=1}^L E_w(l).
\]
Hence
\begin{align*}
    \Disc_N(w^k_{[N]}, X_{[N],[T]}^{\hat{J}_L,L+1} ) \geq \gamma_L &\Disc^{\id}(q^K, \J_L) - 2 \sqrt{\frac{\abs{\Obj}}{2TN}\ln\Big(\frac{2\abs{\J_L}\abs{K}}{\alpha}\Big)}\\
   & -2 \sum_{l=1}^L E_w(l) -  \frac{C\abs{\Obj}\abs{K}}{\abs{K}-1}\sqrt{\frac{\ln(\abs{\hat{J}_L})}{N}}.
\end{align*}

\subsection{Proof of Theorem \ref{theorem lim out}}

Suppose Assumption \ref{assumption regret bound out} and the assumptions of Theorem \ref{theorem lim hid} hold. Let $\alpha>0$. Suppose $T/M^{L-1}\geq C_1$ and $M\geq C_2$ where $C_1$ and $C_2$ are the constants defined in Theorem \ref{theorem lim hid}. Then the conclusions of Theorem \ref{theorem lim hid} hold. For $k\in K$, recall that $(\Bar{w}^{j\to k})_{j\in \J_L} = \frac{1}{\abs{\J_L^k}}\mathbb{1}_{\abs{\J_L^k}}$. Let us work on the event $D$ defined in the proof of Corollary \ref{coro average} and let us use the notations introduced in section \ref{sec proof coro}. Then $
    G_N^{j\to k} = \frac{N}{N^k} S^k_N - \Brac{ \frac{N}{N^{k'}} S^{k'}_N }_{k'\neq k}$.
Let 
$\Bar{G}_N^{j\to k} = \frac{N}{N^k} \Bar{S}^k_N - \Brac{ \frac{N}{N^{k'}} \Bar{S}^{k'}_N }_{k'\neq k}$.
Then according to \eqref{eq18},
\begin{align*}
    \abs{G^{j\to k}_N - \Bar{G}^{j\to k}_N}\leq  N\sqrt{\frac{1}{2TN^k}\ln\Big(\frac{2\abs{\J_L}\abs{K}}{\alpha}\Big)} &+\Brac{ N\sqrt{\frac{1}{2TN^{k'}}\ln\Big(\frac{2\abs{\J_L}\abs{K}}{\alpha}\Big)}}_{k'\neq k} \\
    &+2 N  \sum_{l=1}^L E_w(l).
\end{align*}
Let us study $\Bar{G}_N^{j\to k}$. Similarly as in the proof of Theorem \ref{theorem lim hid} (see section \ref{sec proof th hid}), for $j=j_S$
\begin{equation} \label{eq19}
     \mathbb{E}[Z^{j,L+1}_{m,1}\mid \mathcal{F}_L] = 2^{-2^L +1}\rho_{o_m^{L+1}}(S).
\end{equation}
Then $
    \Bar{G}_N^{j\to k} = 2^{-2^L +1} \Disc^{j\to k}$.
Let $\Bar{w}^{j\to k}_{N+1} := \frac{\exp(\eta^{L+1}\Bar{G}^{j\to k}_N)}{\sum_{h\in \J_L} \exp(\eta^{L+1} \Bar{G}^{h\to k}_N)}$.
Let $k\in K$, $E^k := \arg\max_{j\in \J_L} \Disc^{j\to k}$. Then according to Proposition \ref{prop prelim EWA}, if $E^k = \J_L$ then for every $j\in \J_L$ we have $\Bar{w}^{j\to k}_{N+1} = \Bar{w}^{j\to k}$.
Otherwise, 
\[
\abs{\Bar{w}^{j\to k}_{N+1} - \Bar{w}^{j\to k}} \leq E^k_{\text{EWA}}(N, \abs{\J_L})
\]
where \[
            E^k_{\text{EWA}}(N, \abs{\J_L}) := \frac{1}{\abs{\J_L}}
            \max\Big(1,\frac{\abs{\J_L} - \abs{\J_L^k}}{\abs{\J_L^k}}\Big) \exp\Big(-2^{-2^L+1}\Delta^k\sqrt{8N\ln(\abs{\J_L})}\Big).
            \]
Let $\xi(N,\abs{\J_L},\abs{O},T,K) = \sqrt{\frac{\ln(\abs{\J_L})}{\abs{\Obj}T} \ln\Big(\frac{2\abs{\J_L}\abs{K}}{\alpha}\Big)} + \frac{1}{\abs{\Obj}}\sqrt{8\abs{\J_L}\ln(\abs{\J_L})N}\sum_{l=1}^L E_w(l)$.
Besides, according to Proposition \ref{prop iaf}, with $\eta^{L+1} = \frac{1}{\abs{\Obj}}\sqrt{\frac{2\ln(\abs{\J_L})}{N}}$, by bounding every $\frac{N}{N^{k'}}$ by $\abs{\Obj}$ we get that $\norm{w^{k}_{N+1} - \Bar{w}^{k}_{N+1}}_2 \leq \xi(N,\abs{\J_L},\abs{O},T,K)$.
Hence,
\[
\norm{w^{k}_{N+1} - \Bar{w}^{k}}_2 \leq 
\begin{cases}
\xi(N,\abs{\J_L},\abs{O},T,K) & \text{if }E^k = \J_L, \\
\xi(N,\abs{\J_L},\abs{O},T,K) + \sqrt{\abs{\J_l}} E^k_{\text{EWA}}(N, \abs{\J_L}) & \text{otherwise.}
\end{cases}
\]

\subsection{Proof of Corollary \ref{coro lim output}}
When $T, M, N$ and $\frac{T}{NM^{L-1}}$ tend to infinity, the error terms of Theorem \ref{theorem lim out} tend to zero so we have the result.

\subsection{Proof of Theorem \ref{theo specific case}}
Suppose Assumption \ref{assump hom} (binary correlations) is verified. We denote $(i), (ii)$ and $(iii)$ the three points of the theorem.
\newline
Proof of $(i)\Rightarrow (iii)$. Suppose Assumption \ref{assump decomp} (class decomposition) is verified.
Let us compute the feature discrepancies of the neurons of $\J_L$ in order to compute the weights $\Bar{w}^K$ given in Corollary \ref{coro lim output}. For every $k\in K$, we choose the set $E^k$ given in Assumption \ref{assump decomp} (class decomposition) with maximal size. Let $j = j_S\in \J_L$. 
\begin{align*}
    \Disc^{j\to k} &= \rho^k(S) - \brac{\rho^{k'}(S)}_{k'\neq k} = \brac{\rho_o(S)}_{o\in k} - \brac{\rho_{o}(S)}_{
        k'\neq k, o\in k'
    }.
\end{align*}
Then according to Assumption \ref{assump hom} (binary correlations), $\rho_o(S) = p$ if and only if $o$ has features $S$, \ie if and only if $o\in \Obj^S$, and  $\rho_o(S) = 0$ otherwise.

Let $j=j_S$ where $S\in E^k$. Let $o\in k$. We have $ \brac{\rho_o(S)}_{o\in k} = \frac{\abs{\Obj^j}}{n^k} p = \frac{Cp}{n^k}$. Let $o\in k'\neq k$. If $\rho_{o}(S) >0$, it would mean that $o$ has features $S$ so $o$ would belong to $O^S$, then $o$ would belong to class $k$. This is impossible so $\rho_{o}(S)=0$. Therefore, $\Disc^{j\to k} = \frac{Cp}{n^k}$.
Let $j=j_S$ where $S\notin E^k$. This means that there exists $o\in \Obj^S$ such that $o\notin k$. Therefore $ \brac{\rho_o(S)}_{o\in k} \leq \frac{(C-1)p}{n^k} $ and $\Disc^{j\to k} \leq \frac{(C-1)p}{n^k}$.
Hence $\J^k = E^k$ (where $\J^k$ is the set defined in section \ref{sec main th} and we identify each neurons $j_S$ with its set $S$). So according to Corollary \ref{coro lim output}, for every $k\in K$ $\Bar{w}^K = \frac{1}{\abs{E^k}}\mathbb{1}_{j\in E^k}$. 

Let $\J_L$ an ideal hidden layer where we denote by $(p^j_o)_{o\in \Obj, j\in \J_L}$ the neurons spiking probabilities. Let $\gamma_L$ its constant. Let $k\in K$, $o\in \Obj$. The spiking probability of neuron $k$ when presented with object $o$ is
\begin{align*}
    p_o^k(\Bar{w}^k) &= \Bar{w}^K \cdot p^{\J_L}_o  = \gamma_L  \Bar{w}^K \cdot (\rho_o(S))_{j_S\in \J_L}  = \frac{\gamma_L}{\abs{E^k}} \sum_{j_S\in E^k} \rho_o(S)  = \frac{p\gamma_L n_o^j}{\abs{E^k}} ,
\end{align*}
where $n_o^j$ is the amount of sets $O^j$ containing $o$. If $o\in k$, according to Assumption \ref{assump decomp} (class decomposition) $n_o^j \geq 1$. If $o\notin k$ then $n_o^j =0$ (otherwise $o$ would belong to a set $O^j$ in the composition of class $k$ so $o$ would belong to $k$). Therefore, $p_o^k(\bar{w}^k) >0$ if, and only if, $o\in k$ so $\Bar{w}^K$ is a strong feasible weight family for the ideal layer $\J_L$.
\newline

Proof of $(ii)\Rightarrow (i)$. Suppose there exists a strong feasible weight family $q^K$ w.r.t. $\J_L$. Let $k\in K$ and $E^k := \{S\subset I_L \text{ such that } j_S\in \J_L \text{ and } q^{j\to k} > 0\}$.
Let us show that $k = \bigcup_{S\in E^k} \Obj^S$.
The ideal spiking probability of neuron $k$ when presented with $o\in \Obj$ is $\bar{p}^k_o(q^k, \J_L) = \sum_{j_S\in \J_L} q^{j\to k} \rho_o(S)$.
Let $o\in k$. Then $\bar{p}^k_o(q^k, \J_L) >0$ so there exists $j=j_S\in \J_L$ such that $q^{j\to k}>0$ and $ \rho_o(S)>0$. Hence $j\in E^k$ and $o$ has features $S$ so $o\in \Obj^S$. Therefore
$k\subset \bigcup_{S\in E^k} \Obj^S$.
Let $o \in  \bigcup_{S\in E^k} \Obj^S$. There exists $j=j_S\in E^k$ such that $o\in \Obj^S$ and $q^{j\to k}>0$. Then $\bar{p}^k_o(q^k, \J_L) >0$ so $o\in k$. Therefore, $\bigcup_{S\in E^k} \Obj^S \subset k$.
It is obvious that $(iii)\Rightarrow (ii)$ so we can conclude.

\subsection{Proof of Theorem \ref{theo principal}}

Under the assumptions of Theorem \ref{theo principal}, the conclusions of Corollaries \ref{coro ideal hid} and \ref{coro lim output} and Theorem \ref{theo specific case} hold: by combining them, we get the conclusions of Theorem \ref{theo principal}.

\subsection{Proof of Proposition \ref{prop vcdim}}

Let $j=j_S\in J_L$. Let $o^j:= (o^{i\to j})_{i\in I}$ where $o^{i\to j}= 1$ if and only if $i\in S$. Then $j$ is the only neuron of layer $\J_L$ activated by $o^j$. Indeed, all the sets $S'$ for $j_{S'}\in J_L$ have the same size and only a neuron $j_{S'}$ with smaller $\abs{S'}$ can also be activated by $o^j$. Let $E:= \{o^j, j\in \J_L\}$. Then $\abs{E}=\abs{\J_L}$ because all the $o^j$ are distinct. This set can be fully shattered by $H$: indeed, let $E'\subset E$, and $F:= \{j\in \J_L, \exists o \in E', o = o^{j}\}$. Then $E'$ and $E\setminus E'$ are separated by $\mathbb{1}_{F}$.

Let $E$ be a set of objects that can be fully shattered by $H$. In particular, for each $o\in E$, there exists $F\subset \J_L$ non empty such that such that $o$ is the unique object in $E$ which activates a neuron in $F$. Let $j_o$ be such a neuron. Then $o \mapsto j_o$ is an injection so $\abs{E}\leq \abs{\J_L}$. 

\subsection{Proof of proposition \ref{prop nu}}

Let us use the notations of Theorem \ref{theorem lim out}. Let $j\in \J_L$, $k\in K$. The limit cumulative gain in $M$ and $T$ of a hidden neuron is of the form \[\Bar{G}^{j\to k}_N := C N\Big( \brac{p^j_{o,\infty} }_{o\in k} - \brac{p^j_{o,\infty} }_{\begin{subarray}{l}
    k' \ / \ k'\neq k \\
    o\in k'
\end{subarray}}\Big) \]
where $C>0$ is a constant and $p^j_{o,\infty}$ is the limit spiking probability of hidden neuron $j$ when presented with object of nature $o$. Since limit hidden layers are not ideal layers, another quantity replaces the feature discrepancy: the quantity
\[
c^{j\to k} := \brac{p^j_{o,\infty} }_{o\in k} - \brac{p^j_{o,\infty} }_{\begin{subarray}{l}
    k' \ / \ k'\neq k \\
    o\in k'
\end{subarray}}
\]
and according to Proposition \ref{prop prelim EWA}, the output weights of neuron $k\in K$ converge to uniform distribution of weight on the set $\arg\max_{j\in \J_L} c^{j\to k}$.
In order to compute the limit weights of neuron $k_2$, let us compute the quantities $c^{j\to k_2}$. Let $m\in [N]$. The limit conditional spiking probability of a hidden neuron $j=j_{\{i_1,i_2\}}$ is
$
\Big(-\nu + \frac{1}{2}(X^{i_1}_{o,t} + X^{i_2}_{o,t})\Big)_+$.
Therefore, when only one of the two input neurons $i_1$ and $i_2$ is active, the spiking probability of $j$ is $q:= \Big(\frac{1}{2} - \nu\Big) p'$ and when the two neurons $i_2$ and $i_2$ are active, the spiking probability of $j$ is $p:= {p'}^2(1-\nu) + 2p'(1-p') (\tfrac{1}{2} - \nu)$.
By definition of the feature discrepancy we have
\begin{align*}
    d^{j_{\{\text{Blue},\text{Square}\}}\to k_2}=&\frac{1}{5}p -  q \\
     d^{j_{\{\text{Red},\text{Circle}\}}\to k_2}=&\frac{1}{5}p + \frac{2}{5} q - \frac{1}{2} q = \frac{1}{5}p - \frac{1}{10} q\\
     d^{j_{\{\text{Blue},\text{Circle}\}}\to k_2}=&\frac{3}{5}q -  \frac{1}{4}p - \frac{1}{4} q = \frac{7}{20} q - \frac{1}{4}p
\end{align*}
By symmetry, we know every $d^{i\to k_2}$.
Since $\nu < \frac{1}{2}$ we have $q >0$. Then
$d^{j_{\{\text{Blue},\text{Square}\}}\to k_2} <  d^{j_{\{\text{Red},\text{Circle}\}}\to k_2}$.
Besides, $d^{j_{\{\text{Red},\text{Circle}\}}\to k_2}- d^{j_{\{\text{Blue},\text{Circle}\}}\to k_2} = \frac{9}{20}p - \frac{9}{20}q >0$,
because $p>q$. Therefore, the limit weights of $k_2$ converge to the family putting weight $1/4 $ on $j_{\{\text{Red},\text{Circle}\}}$, $j_{\{\text{Red},\text{Triangle}\}}$, $j_{\{\text{Green},\text{Circle}\}}$ and $j_{\{\text{Green},\text{Triangle}\}}$. Hence, the limit activity of $k_2$ is equal to zero when presented with ${\color{blue}\Large \Box }$, so this objects is not well classified.

\vskip 0.2in

\bibliography{biblio}

\begin{thebibliography}{54}
\providecommand{\natexlab}[1]{#1}
\providecommand{\url}[1]{\texttt{#1}}
\expandafter\ifx\csname urlstyle\endcsname\relax
  \providecommand{\doi}[1]{doi: #1}\else
  \providecommand{\doi}{doi: \begingroup \urlstyle{rm}\Url}\fi

\bibitem[Bacry et~al.(2015)Bacry, Mastromatteo, and Muzy]{bacry2015hawkes}
Emmanuel Bacry, Iacopo Mastromatteo, and Jean-Fran{\c{c}}ois Muzy.
\newblock Hawkes processes in finance.
\newblock \emph{Market Microstructure and Liquidity}, 1\penalty0 (01):\penalty0
  1550005, 2015.

\bibitem[Bacry et~al.(2017)Bacry, Bompaire, Deegan, Ga{\"\i}ffas, and
  Poulsen]{bacry2017tick}
Emmanuel Bacry, Martin Bompaire, Philip Deegan, St{\'e}phane Ga{\"\i}ffas, and
  S{\o}ren~V Poulsen.
\newblock tick: A python library for statistical learning, with an emphasis on
  hawkes processes and time-dependent models.
\newblock \emph{JMLR}, 18\penalty0 (1):\penalty0 7937--7941, 2017.

\bibitem[Bao et~al.(2017)Bao, Kuang, Peissig, Page, and
  Willett]{pmlr-v68-bao17a}
Yujia Bao, Zhaobin Kuang, Peggy Peissig, David Page, and Rebecca Willett.
\newblock Hawkes process modeling of adverse drug reactions with longitudinal
  observational data.
\newblock In \emph{Proceedings of the 2nd Machine Learning for Healthcare
  Conference}, volume~68, pages 177--190, 2017.

\bibitem[Bietti et~al.(2023)Bietti, Cabannes, Bouchacourt, Jegou, and
  Bottou]{NEURIPS2023_0561738a}
Alberto Bietti, Vivien Cabannes, Diane Bouchacourt, Herve Jegou, and Leon
  Bottou.
\newblock Birth of a transformer: A memory viewpoint.
\newblock In \emph{NeurIPS}, 2023.

\bibitem[Br{\'e}maud and Massouli{\'e}(1996)]{bremaud1996stability}
Pierre Br{\'e}maud and Laurent Massouli{\'e}.
\newblock Stability of nonlinear hawkes processes.
\newblock \emph{Ann. Prob.}, pages 1563--1588, 1996.

\bibitem[Caporale and Dan(2008)]{caporale2008spike}
Natalia Caporale and Yang Dan.
\newblock Spike timing--dependent plasticity: a hebbian learning rule.
\newblock \emph{Annu. Rev. Neurosci.}, 31:\penalty0 25--46, 2008.

\bibitem[Cesa-Bianchi and Lugosi(2006)]{cesa2006prediction}
Nicolo Cesa-Bianchi and G{\'a}bor Lugosi.
\newblock \emph{Prediction, learning, and games}.
\newblock Cambridge university press, 2006.

\bibitem[Chiang and Mohler(2020)]{chiang2020hawkes}
Wen-Hao Chiang and George Mohler.
\newblock Hawkes process multi-armed bandits for disaster search and rescue.
\newblock \emph{arXiv preprint arXiv:2004.01580}, 2020.

\bibitem[Cordonnier et~al.(2020)Cordonnier, Loukas, and
  Jaggi]{Cordonnier2020On}
Jean-Baptiste Cordonnier, Andreas Loukas, and Martin Jaggi.
\newblock On the relationship between self-attention and convolutional layers.
\newblock In \emph{ICLR}, 2020.

\bibitem[Feller and Kerschensteiner(2020)]{feller2020retinal}
Marla Feller and Daniel Kerschensteiner.
\newblock Retinal waves and their role in visual system development.
\newblock In \emph{Synapse development and maturation}, pages 367--382.
  Elsevier, 2020.

\bibitem[Fr{\'e}maux and Gerstner(2016)]{fremaux2016neuromodulated}
Nicolas Fr{\'e}maux and Wulfram Gerstner.
\newblock Neuromodulated spike-timing-dependent plasticity, and theory of
  three-factor learning rules.
\newblock \emph{Frontiers in neural circuits}, 9:\penalty0 85, 2016.

\bibitem[Fukushima and Miyake(1982)]{fukushima1982neocognitron}
Kunihiko Fukushima and Sei Miyake.
\newblock Neocognitron: A new algorithm for pattern recognition tolerant of
  deformations and shifts in position.
\newblock \emph{Pattern Recognit.}, 15\penalty0 (6):\penalty0 455--469, 1982.

\bibitem[Galves and L{\"o}cherbach(2016)]{galves2016modeling}
Antonio Galves and Eva L{\"o}cherbach.
\newblock Modeling networks of spiking neurons as interacting processes with
  memory of variable length.
\newblock \emph{J. Soci. Fr. Stat.}, 157\penalty0 (1):\penalty0 17--32, 2016.

\bibitem[Gao and Pavel(2018)]{gao2018properties}
Bolin Gao and Lacra Pavel.
\newblock On the properties of the softmax function with application in game
  theory and reinforcement learning, 2018.

\bibitem[Gerhard et~al.(2017)Gerhard, Deger, and
  Truccolo]{gerhard2017stability}
Felipe Gerhard, Moritz Deger, and Wilson Truccolo.
\newblock On the stability and dynamics of stochastic spiking neuron models:
  Nonlinear hawkes process and point process glms.
\newblock \emph{PLoS computational biology}, 13\penalty0 (2):\penalty0
  e1005390, 2017.

\bibitem[Gerstein et~al.(1989)Gerstein, Bedenbaugh, and Aertsen]{gerstein1989}
George~L Gerstein, Purvis Bedenbaugh, and Ad~MHJ Aertsen.
\newblock Neuronal assemblies.
\newblock \emph{IEEE Transactions on Biomedical Engineering}, 36\penalty0
  (1):\penalty0 4--14, 1989.

\bibitem[Gluck and Bower(1988)]{gluck1988conditioning}
Mark~A Gluck and Gordon~H Bower.
\newblock From conditioning to category learning: an adaptive network model.
\newblock \emph{J. Exp. Psychol.}, 117\penalty0 (3):\penalty0 227, 1988.

\bibitem[Hall and Willett(2016)]{hall2016tracking}
Eric~C Hall and Rebecca~M Willett.
\newblock Tracking dynamic point processes on networks.
\newblock \emph{IEEE Trans. Inf. Theory}, 62\penalty0 (7):\penalty0 4327--4346,
  2016.

\bibitem[Hawkes(1971)]{hawkes1971spectra}
Alan~G Hawkes.
\newblock Spectra of some self-exciting and mutually exciting point processes.
\newblock \emph{Biometrika}, 58\penalty0 (1):\penalty0 83--90, 1971.

\bibitem[Hawkes(2018)]{hawkes2018hawkes}
Alan~G Hawkes.
\newblock Hawkes processes and their applications to finance: a review.
\newblock \emph{Quantitative Finance}, 18\penalty0 (2):\penalty0 193--198,
  2018.

\bibitem[Hebb(2005)]{hebb2005organization}
Donald~Olding Hebb.
\newblock \emph{The organization of behavior: A neuropsychological theory}.
\newblock Psychology press, 2005.

\bibitem[Hevner(2000)]{hevner2000development}
Robert~F Hevner.
\newblock Development of connections in the human visual system during fetal
  mid-gestation: a dil-tracing study.
\newblock \emph{Journal of Neuropathology \& Experimental Neurology},
  59\penalty0 (5):\penalty0 385--392, 2000.

\bibitem[Hodara and L{\"o}cherbach(2017)]{hodara2017hawkes}
Pierre Hodara and Eva L{\"o}cherbach.
\newblock Hawkes processes with variable length memory and an infinite number
  of components.
\newblock \emph{Adv. Appl. Probab.}, 49\penalty0 (1):\penalty0 84--107, 2017.

\bibitem[Hubel and Wiesel(1962)]{hubel1962receptive}
David~H Hubel and Torsten~N Wiesel.
\newblock Receptive fields, binocular interaction and functional architecture
  in the cat's visual cortex.
\newblock \emph{J. Physiol.}, 160\penalty0 (1):\penalty0 106, 1962.

\bibitem[Jaffard et~al.(2024)Jaffard, Vaiter, Muzy, and
  Reynaud-Bouret]{jaffard:hal-04065229}
Sophie Jaffard, Samuel Vaiter, Alexandre Muzy, and Patricia Reynaud-Bouret.
\newblock {Provable local learning rule by expert aggregation for a Hawkes
  network}.
\newblock In \emph{AISTATS}, 2024.

\bibitem[Kirchner(2017)]{kirchner2017estimation}
Matthias Kirchner.
\newblock An estimation procedure for the hawkes process.
\newblock \emph{Quantitative Finance}, 17\penalty0 (4):\penalty0 571--595,
  2017.

\bibitem[Kruschke(2020)]{kruschke2020alcove}
John~K Kruschke.
\newblock Alcove: An exemplar-based connectionist model of category learning.
\newblock In \emph{Connectionist psychology: a text with readings}, pages
  107--138. Psychology Press, 2020.

\bibitem[Lambert et~al.(2018)Lambert, Tuleau-Malot, Bessaih, Rivoirard, Bouret,
  Leresche, and Reynaud-Bouret]{lambert2018reconstructing}
Regis~C Lambert, Christine Tuleau-Malot, Thomas Bessaih, Vincent Rivoirard,
  Yann Bouret, Nathalie Leresche, and Patricia Reynaud-Bouret.
\newblock Reconstructing the functional connectivity of multiple spike trains
  using hawkes models.
\newblock \emph{J. Neurosci. Methods}, 297:\penalty0 9--21, 2018.

\bibitem[LeCun et~al.(1989)LeCun, Boser, Denker, Henderson, Howard, Hubbard,
  and Jackel]{lecun1989handwritten}
Yann LeCun, Bernhard Boser, John Denker, Donnie Henderson, Richard Howard,
  Wayne Hubbard, and Lawrence Jackel.
\newblock Handwritten digit recognition with a back-propagation network.
\newblock In \emph{NeurIPS}, 1989.

\bibitem[Legenstein et~al.(2005)Legenstein, Naeger, and
  Maass]{legenstein2005can}
Robert Legenstein, Christian Naeger, and Wolfgang Maass.
\newblock What can a neuron learn with spike-timing-dependent plasticity?
\newblock \emph{Neural Comput.}, 17\penalty0 (11):\penalty0 2337--2382, 2005.

\bibitem[Legenstein et~al.(2008)Legenstein, Pecevski, and
  Maass]{legenstein2008learning}
Robert Legenstein, Dejan Pecevski, and Wolfgang Maass.
\newblock A learning theory for reward-modulated spike-timing-dependent
  plasticity with application to biofeedback.
\newblock \emph{PLoS computational biology}, 4\penalty0 (10):\penalty0
  e1000180, 2008.

\bibitem[Lewis(2011)]{lewis2011pruning}
Sian Lewis.
\newblock Pruning the dendritic tree.
\newblock \emph{Nature Reviews Neuroscience}, 12\penalty0 (9):\penalty0
  493--493, 2011.

\bibitem[Litwin-Kumar and Doiron(2014)]{litwin2014formation}
Ashok Litwin-Kumar and Brent Doiron.
\newblock Formation and maintenance of neuronal assemblies through synaptic
  plasticity.
\newblock \emph{Nature communications}, 5\penalty0 (1):\penalty0 5319, 2014.

\bibitem[Mascart et~al.(2022)Mascart, Scarella, Reynaud-Bouret, and
  Muzy]{mascartetal2022}
Cyrille Mascart, Gilles Scarella, Patricia Reynaud-Bouret, and Alexandre Muzy.
\newblock Scalability of large neural network simulations via activity tracking
  with time asynchrony and procedural connectivity.
\newblock \emph{Neural Computation}, 34\penalty0 (9):\penalty0 1915--1943,
  2022.

\bibitem[Mascart et~al.(2023)Mascart, Hill, Muzy, and
  Reynaud-Bouret]{mascart2023efficient}
Cyrille Mascart, David Hill, Alexandre Muzy, and Patricia Reynaud-Bouret.
\newblock Efficient simulation of sparse graphs of point processes.
\newblock \emph{ACM Transactions on Modeling and Computer Simulation},
  33\penalty0 (1-2):\penalty0 1--27, 2023.

\bibitem[Mezzadri et~al.(2022)Mezzadri, Laloë, Mathy, and
  Reynaud-Bouret]{MEZZADRI2022102691}
Giulia Mezzadri, Thomas Laloë, Fabien Mathy, and Patricia Reynaud-Bouret.
\newblock Hold-out strategy for selecting learning models: Application to
  categorization subjected to presentation orders.
\newblock \emph{J. Math. Psychol.}, 109:\penalty0 102691, 2022.

\bibitem[Ost and Reynaud-Bouret(2020)]{ost2020sparse}
Guilherme Ost and Patricia Reynaud-Bouret.
\newblock Sparse space--time models: Concentration inequalities and lasso.
\newblock 2020.

\bibitem[Phi et~al.(2020)Phi, Muzy, and Reynaud-Bouret]{phi2020event}
Tien~Cuong Phi, Alexandre Muzy, and Patricia Reynaud-Bouret.
\newblock Event-scheduling algorithms with kalikow decomposition for simulating
  potentially infinite neuronal networks.
\newblock \emph{SN Comput. Sci.}, 1\penalty0 (1):\penalty0 1--10, 2020.

\bibitem[Reynaud-Bouret and Schbath(2010)]{reynaud2010adaptive}
Patricia Reynaud-Bouret and Sophie Schbath.
\newblock Adaptive estimation for hawkes processes; application to genome
  analysis.
\newblock 2010.

\bibitem[Reynaud-Bouret et~al.(2013)Reynaud-Bouret, Rivoirard, and
  Tuleau-Malot]{reynaud2013inference}
Patricia Reynaud-Bouret, Vincent Rivoirard, and Christine Tuleau-Malot.
\newblock Inference of functional connectivity in neurosciences via hawkes
  processes.
\newblock In \emph{GlobalSIP}, pages 317--320, 2013.

\bibitem[Rohekar et~al.(2023)Rohekar, Gurwicz, and
  Nisimov]{NEURIPS2023_rohekar}
Raanan~Y. Rohekar, Yaniv Gurwicz, and Shami Nisimov.
\newblock Causal interpretation of self-attention in pre-trained transformers.
\newblock In \emph{NeurIPS}, 2023.

\bibitem[Rosenblatt(1957)]{rosenblatt1957perceptron}
Frank Rosenblatt.
\newblock \emph{The perceptron, a perceiving and recognizing automaton:(Project
  Para)}.
\newblock Cornell Aeronautical Laboratory, 1957.

\bibitem[Rosenblatt et~al.(1962)]{rosenblatt1962principles}
Frank Rosenblatt et~al.
\newblock \emph{Principles of neurodynamics: Perceptrons and the theory of
  brain mechanisms}, volume~55.
\newblock Spartan books Washington, DC, 1962.

\bibitem[Rybka et~al.(2024)Rybka, Davydov, Vlasov, Serenko, Sboev, and
  Ilyin]{rybka2024comparison}
Roman Rybka, Yury Davydov, Danila Vlasov, Alexey Serenko, Alexander Sboev, and
  Vyacheslav Ilyin.
\newblock Comparison of bagging and sparcity methods for connectivity reduction
  in spiking neural networks with memristive plasticity.
\newblock \emph{Big Data and Cognitive Computing}, 8\penalty0 (3):\penalty0 22,
  2024.

\bibitem[Sboev et~al.(2022)Sboev, Davydov, Rybka, Vlasov, and
  Serenko]{10.1007/978-3-030-96993-6_48}
Alexander Sboev, Yury Davydov, Roman Rybka, Danila Vlasov, and Alexey Serenko.
\newblock A comparison of two variants of memristive plasticity for solving the
  classification problem of handwritten digits recognition.
\newblock In \emph{Biologically Inspired Cognitive Architectures 2021}, pages
  438--446, 2022.

\bibitem[Schultz(1998)]{schultz1998predictive}
Wolfram Schultz.
\newblock Predictive reward signal of dopamine neurons.
\newblock \emph{Journal of neurophysiology}, 1998.

\bibitem[Shalev-Shwartz and Ben-David(2014)]{Shalev-Shwartz_Ben-David_2014}
Shai Shalev-Shwartz and Shai Ben-David.
\newblock \emph{Understanding Machine Learning: From Theory to Algorithms}.
\newblock Cambridge University Press, 2014.

\bibitem[Singer et~al.(1997)Singer, Engel, Kreiter, Munk, Neuenschwander, and
  Roelfsema]{singer1997neuronal}
Wolf Singer, Andreas~K Engel, Andreas~K Kreiter, Matthias~HJ Munk, Sergio
  Neuenschwander, and Pieter~R Roelfsema.
\newblock Neuronal assemblies: necessity, signature and detectability.
\newblock \emph{Trends in cognitive sciences}, 1\penalty0 (7):\penalty0
  252--261, 1997.

\bibitem[Stone(2018)]{stone2018principles}
James~V Stone.
\newblock Principles of neural information theory.
\newblock \emph{Computational Neuroscience and Metabolic Efficiency}, 2018.

\bibitem[Tavanaei et~al.(2019)Tavanaei, Ghodrati, Kheradpisheh, Masquelier, and
  Maida]{Tavanaei_2019}
Amirhossein Tavanaei, Masoud Ghodrati, Saeed~Reza Kheradpisheh, Timothée
  Masquelier, and Anthony Maida.
\newblock Deep learning in spiking neural networks.
\newblock \emph{Neural Netw.}, 111:\penalty0 47–63, 2019.

\bibitem[T{\"u}rkyilmaz et~al.(2013)T{\"u}rkyilmaz, van Lieshout, and
  Stein]{turkyilmaz2013comparing}
K~T{\"u}rkyilmaz, Maria Nicolette~Margaretha van Lieshout, and Alfred Stein.
\newblock Comparing the hawkes and trigger process models for aftershock
  sequences following the 2005 kashmir earthquake.
\newblock \emph{Mathematical geosciences}, 45:\penalty0 149--164, 2013.

\bibitem[Vaswani et~al.(2017)Vaswani, Shazeer, Parmar, Uszkoreit, Jones, Gomez,
  Kaiser, and Polosukhin]{vaswani2017attention}
Ashish Vaswani, Noam Shazeer, Niki Parmar, Jakob Uszkoreit, Llion Jones,
  Aidan~N Gomez, {\L}ukasz Kaiser, and Illia Polosukhin.
\newblock Attention is all you need.
\newblock \emph{NeurIPS}, 2017.

\bibitem[Zhou et~al.(2013)Zhou, Zha, and Song]{zhou2013learning}
Ke~Zhou, Hongyuan Zha, and Le~Song.
\newblock Learning social infectivity in sparse low-rank networks using
  multi-dimensional hawkes processes.
\newblock In \emph{AISTATS}, 2013.

\bibitem[Zuo et~al.(2020)Zuo, Jiang, Li, Zhao, and Zha]{pmlr-v119-zuo20a}
Simiao Zuo, Haoming Jiang, Zichong Li, Tuo Zhao, and Hongyuan Zha.
\newblock Transformer {H}awkes process.
\newblock In \emph{ICML}, 2020.

\end{thebibliography}

\end{document}